\theoremstyle{plain}
\newtheorem{theorem}{Theorem}
\newtheorem{lemma}{Lemma}
\newtheorem{corollary}{Corollary}
\newtheorem{proposition}{Proposition}
\theoremstyle{definition}
\newtheorem*{definition}{Definition}
\newtheorem{remark}{Remark}
\title{Affine processes on $\mathbb{R}_+^m\times\mathbb{R}^n$ and multiparameter time changes}
\author[Caballero]{Ma. Emilia Caballero}
\address[MEC and GUB]{Instituto de Matem\'aticas, Universidad Nacional Aut\'onoma de M\'exico.  
Distrito Federal CP 04510,  M\'exico}
\author[P\'erez-Garmendia]{Jos\'e Luis P\'erez Garmendia}
\address[JLPG]{
Instituto de Investigaciones en Matem\'aticas Aplicadas y en Sistemas, 
Universidad Nacional Aut\'onoma de M\'exico. 
Distrito Federal CP 04510,  M\'exico}
\author[Uribe Bravo]{Ger\'onimo Uribe Bravo}
\thanks{Research supported by UNAM-DGAPA-PAPIIT grant no. IA101014} 
\email{marie@matem.unam.mx\\ garmendia@sigma.iimas.unam.mx\\ geronimo@matem.unam.mx}
\newenvironment{lesn}{\begin{linenomath}\begin{equation*}}{\end{equation*}\end{linenomath}}
\subjclass[2010]{60J80, 60F17}
\keywords{L\'evy processes, Continuous branching processes with immigration, Ornstein-Uhlenbeck processes, multiparameter time change%
}
\newcommand{\st}{\ensuremath{\mathbf{E}}}
\newcommand{\Fc}{\F^\circ}
\newcommand{\re}{\ensuremath{\mathbb{R}}}
\newcommand{\paren}[1]{\ensuremath{\left( #1\right) }}
\newcommand{\F}{\ensuremath{\mathscr{F}}}
\newcommand{\oo}{\ensuremath{ \Omega  } }
\newcommand{\p}{\mathbb{P}}
\newenvironment{esn}{\begin{equation*}}{\end{equation*}}
\newcommand{\imf}[2]{\ensuremath{#1\!\paren{#2}}}
\newcommand{\se}{\ensuremath{\mathbb{E}}}
\newcommand{\cond}[2]{\left.\vphantom{#2}#1\ \right| #2}
\DeclareMathOperator{\cb}{CB}
\DeclareMathOperator{\cbi}{CBI}
\newcommand{\defin}[1]{{\bf #1}}
\newcommand{\bra}[1]{\ensuremath{\left[ #1\right] }}
\newcommand{\z}{\ensuremath{\mathbb{Z}}}
\newcommand{\na}{\ensuremath{\mathbb{N}}}
\newcommand{\set}[1]{\ensuremath{\left\{ #1\right\} }}
\DeclareMathOperator{\id}{Id} %
\newcommand{\indi}[1]{\si_{#1}}
\newcommand{\si}{{\ensuremath{\bf{1}}}}
\newcommand{\eps}{\ensuremath{ \varepsilon}}
\newcommand{\esp}[1]{\ensuremath{\se\! \left( #1 \right)}}
\newcommand{\cadlag}{c\`adl\`ag}
\DeclareMathOperator{\ivp}{IVP} %
\newcommand{\sa}{\ensuremath{\sigma}\nbd field}
\newcommand{\nbd}{\nobreakdash -}
\newcommand{\sag}[1]{\sigma\!\paren{#1}}
\newcommand{\ra}{\ensuremath{\mathbb{Q}}}
\newcommand{\proba}[1]{\ensuremath{\sip\! \left( #1 \right)}}
\newcommand{\sip}{\mathbb{P}}
\newcommand{\mc}[1]{\ensuremath{\mathscr{#1}}}
\newcommand{\G}{\ensuremath{\mc{G}}}
\newcommand{\floor}[1]{\ensuremath{\lfloor #1\rfloor}}
\begin{document}
\begin{abstract}
We present a time change construction of affine processes with state-space $\mathbb{R}_+^m\times \mathbb{R}^n$. 
These processes were systematically studied in \cite{MR1994043} since they contain interesting classes of processes such as L\'evy processes, continuous branching processes with immigration, and of the Ornstein-Uhlenbeck type. 
The construction is based on a (basically) continuous functional of a multidimensional L\'evy process which implies that limit theorems for L\'evy processes 
(both almost sure and in distribution) can be inherited to affine processes. 
The construction can be interpreted as a multiparameter time change scheme or as a (random) ordinary differential equation driven by discontinuous functions. 
In particular, we propose approximation schemes for affine processes based on the Euler method for solving the associated discontinuous ODEs, which are shown to converge. 
\end{abstract}
\maketitle
\section{Introduction}
\label{introSection}

Affine processes on the state-space $\st=\re_+^m\times \re^n$ are a class of processes introduced in \cite{MR1994043} for two reasons. 
First, they contain important classes of Markov processes like
L\'evy processes, (multi-type) continuous branching processes with immigration, and of the Ornstein-Uhlen\-beck type. 
That is, they contain the fundamental examples of models in (stochastic) population dynamics (as in \cite{MR2466449}) and mathematical finance (as has been argued in \cite{MR1994043} and \cite{MR2233549}). 
Second, they are analytically tractable. 
Indeed, they have been shown to be parametrized in a manner similar to L\'evy processes and one can access 
their finite dimensional distributions by solving an ordinary differential equation of the Riccati type (cf. \cite{MR1994043}). 

To define them, let 
$Z=(Z_t,t\geq 0)$ denote a stochastic process on a measurable space $\paren{\oo,\F}$ whose paths are c\'adlag functions from $[0,\infty)$ to $\st$. 
The canonical filtration of $Z$ will be denoted $\Fc_t$. 
Suppose that the measurable space is equipped with a family of (sub)probability measures $\paren{\p_z,z\in \st}$ such that under each $\p_z$ the process $Z$ starts at $z$. 
Furthermore, we assume that $Z$ is stochastically continuous under $\p_z$ for any $z\in\st$ and that these measures constitute a Markov family:
\begin{linenomath}\begin{esn}
\imf{\se_z}{\cond{ \imf{f}{Z_{t+s}}}{\Fc_s}}=\imf{P_tf}{Z_s}\quad\text{where}\quad \imf{P_tf}{z}=\imf{\se_z}{\imf{f}{Z_t}}. 
\end{esn}\end{linenomath}
\begin{definition}
The Markov family $\paren{\p_z,z\in \st}$ is affine if
\begin{linenomath}
\begin{equation}
\label{affineExponentEquation}
\imf{\se_z}{e^{ u\cdot Z_t}}=\imf{\Phi}{t,u}e^{z\cdot \imf{\psi}{t,u}}
\end{equation}\end{linenomath}for all $u\in 
\re_-^m\times i\re^n$, where $\re_-^m$ denotes the set of elements of $\re^m$ whose coordinates are negative. 
\end{definition}
These processes are part of a larger one of so-called affine processes on general state-spaces and much recent work has been aimed at characterizing these Markov processes, for example by proving their Feller property and the precise form of their infinitesimal generator. This work started in \cite{MR1994043} for \emph{regular} affine processes on $\st$ and was later extended in \cite{MR2851694}, \cite{MR3040553} and \cite{MR3185916} by proving that regularity already follows from stochastic continuity and also by considering more general state-spaces than we do here. 

Our main result aims at giving a pathwise construction of affine processes in terms of a multiparameter time change of L\'evy processes, which are considered as more basic building blocks. 
\begin{theorem}
\label{affineProcessRepresentationTheorem}
Let $X^1, \ldots, X^m$ and $Y$ be independent L\'evy processes on $\re^{m+n}$. 
We suppose that the first $m$ coordinates of $Y$ are subordinators, 
that $X^{i,i}$ has no negative jumps and that $X^{i,j}$ is a subordinator for $1\leq i,j\leq m$ and $i\neq j$. 
Furthermore, in the Gaussian part of $X^i$, the $i$-th coordinate is assumed independent of coordinates $m+1$ up to $n$. 

Let $\beta$ be an $n\times n$ real matrix. 
Then, for any $z\in \st$ there exists a unique solution $Z$ to\begin{equation}
\label{affineProcessRepresentationEquation}
\begin{cases}
Z^j_t=z_j+\sum_{i=1}^m X^{i,j}\circ C^i_t+Y^j_t   &1\leq j\leq m \\
Z^{m+j}_t=z_{m+j}+\sum_{i=1}^m X^{i,m+j}\circ C^i_t+Y^{m+j}_t+\sum_{i=1}^{n}C^{m+i}_t\beta_{i,j} & 1\leq j\leq n
\end{cases}
\end{equation}with\begin{linenomath}\begin{esn}
C^i_t=\int_0^t Z^i_s\, ds
\end{esn}\end{linenomath}whose first $m$ coordinates are non-negative. If $\p_z$ denotes the law of $Z$, then $\paren{\p_z,z\in\st}$ is an affine Markov family on $\st$ and every  affine Markov family $\st$ is obtained by this construction. 
\end{theorem}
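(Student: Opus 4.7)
The proof naturally splits into three parts: pathwise existence and uniqueness of $Z$, verification of the affine Markov property for $\paren{\p_z,z\in\st}$, and the converse assertion that every affine family on $\st$ arises this way.

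For existence and uniqueness, the first observation is that the first $m$ coordinates form a closed subsystem
\begin{esn}
Z^j_t = z_j + \sum_{i=1}^m X^{i,j}\circ C^i_t + Y^j_t, \qquad 1\le j\le m,
\end{esn}
in which the implicit clocks $C^i_t=\int_0^t Z^i_s\,ds$ are absolutely continuous, non-decreasing functionals of the unknown. My plan is a Lamperti/time-change construction: on its intrinsic clock, each $Z^i$ decomposes as a spectrally positive L\'evy process driven by $X^{i,i}$ (by the no-negative-jumps hypothesis) plus non-decreasing contributions from the cross terms $X^{i',j}$ ($i'\neq i$), which are subordinators, and from the first $m$ coordinates of $Y$, which are also subordinators. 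This manifestly preserves non-negativity, and a monotone Picard/Euler iteration on the vector $(C^1_t,\dots,C^m_t)$---precisely the Euler scheme announced in the abstract---produces a solution on compact time intervals. Uniqueness would follow from a Gronwall-type estimate for two candidate clock vectors, exploiting continuity of $t\mapsto C^i_t$. Once $Z^1,\ldots,Z^m$ and the clocks are in hand, the last $n$ equations reduce to a linear \cadlag-driven system in $Z^{m+1},\ldots,Z^{m+n}$ with integral coupling through $\beta$, solvable by variation of constants.

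For the affine Markov property, the Markov property itself is inherited from the construction: the strong Markov property of the driving L\'evy processes at the stopping times $C^i_s$ shows that $\paren{Z_{s+t}}_{t\ge 0}$ satisfies the same system with independent driving increments and initial value $Z_s$. To derive \eqref{affineExponentEquation} I would condition on the continuous clocks $C^i$ and use independence of $X^1,\ldots,X^m,Y$ to factor $\imf{\se_z}{e^{u\cdot Z_t}}$ as an exponential whose argument is linear in $t$ and in the $C^i_t$, with coefficients supplied by the L\'evy-Khintchine exponents of the drivers evaluated at $u$ (the Gaussian decoupling hypothesis is what makes these exponents well defined at $u\in\re_-^m\times i\re^n$ after combining the $X^{i,i}$ and $X^{i,m+j}$ contributions). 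Substituting back into $C^i_t=\int_0^t Z^i_s\,ds$ and iterating yields a Riccati-type fixed-point for $\imf{\psi}{t,u}$ matching that of \cite{MR1994043}, which identifies the law as affine and produces the correspondence between the drivers $X^i,Y,\beta$ and the admissible parameters of \cite{MR1994043}. For the converse, given any affine family on $\st$ the admissibility classification of \cite{MR1994043,MR2851694,MR3040553,MR3185916} provides exactly such L\'evy triplets for $X^1,\ldots,X^m,Y$ and a matrix $\beta$; the process built from them via \eqref{affineProcessRepresentationEquation} then has the prescribed affine exponent, so uniqueness in law of Markov processes with a given semigroup closes the loop.

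The principal obstacle I foresee is the pathwise step. The implicit state-dependent time changes, together with possibly infinite activity of small positive jumps near the boundary $\set{z^i=0}$, make it delicate to verify that the Euler/Picard iteration converges, that non-negativity of the first $m$ coordinates is genuinely preserved through accumulation of small jumps, and that no clock $C^i$ degenerates or explodes. The structural hypotheses of the theorem---spectral positivity of $X^{i,i}$, the subordinator character of the cross and immigration coordinates, and the Gaussian decoupling---are exactly what one needs at each of these points, and fitting them together rigorously is where the real work lies.
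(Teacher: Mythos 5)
Your overall architecture (non-negative block first, then the linear $\beta$-system by variation of constants; Markov property from the strong Markov property of the drivers; converse from the admissibility classification of \cite{MR1994043}) matches the paper's. But there are two genuine gaps at the points you yourself flag as the real work. The first and most serious is uniqueness: a Gronwall estimate for two candidate clock vectors cannot work, because the maps $x\mapsto X^{i,j}(x)$ are merely c\`adl\`ag — they are not Lipschitz, not even continuous — so there is no modulus with which to run Gronwall. Worse, uniqueness genuinely \emph{fails} at the deterministic level: the paper points out that for some c\`adl\`ag drivers (e.g.\ a normalized Brownian excursion path) the system admits solutions exhibiting ``spontaneous generation,'' i.e.\ a coordinate that restarts from $0$ with no immigration. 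The paper's resolution is to (i) prove a monotonicity lemma showing that any solution \emph{without} spontaneous generation is minimal, hence unique in that class, and (ii) prove that for the L\'evy drivers of the theorem, \emph{all} solutions a.s.\ lack spontaneous generation. Step (ii) is where the probabilistic structure enters: for finite-variation $X^{j,j}$ one does use a Gronwall-type bound via the drift $\lim_h X_h/h$, but for infinite-variation $X^{j,j}$ the argument hinges on $\liminf_{h\to 0}X^{j,j}_h/h=-\infty$, which forces the time-changed coordinate to be absorbed at $0$. A Gronwall argument alone simply does not see this dichotomy. (Your existence plan via monotone approximation is closer to the paper's, which approximates the off-diagonal drivers from above by piecewise constant functions and solves one-dimensional Lamperti problems interval by interval; the Euler scheme appears in the paper only for the stability theorem.)

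The second gap is in your derivation of the affine transform. You propose to ``condition on the continuous clocks $C^i$'' and factor $\imf{\se_z}{e^{u\cdot Z_t}}$ using independence of the drivers. But $C^i$ is constructed from $X^{i,\cdot}\circ C^i$ itself, so conditioning on $C$ does not render $X^{i}\circ C^i_t$ a L\'evy increment evaluated at a deterministic time; the clocks and the drivers are strongly dependent and the naive factorization is circular. The paper instead builds the exponential local martingales $e^{u\cdot X^i_t}-\int_0^t \imf{R_i}{u}e^{u\cdot X^i_s}ds$, shows via the multiparameter optional stopping theory of \cite[Ch.~6]{MR838085} that $(C^1_t,\ldots,C^m_t,t)$ is a multiparameter stopping time (this itself requires the uniqueness/minimality of the solution, via a Galmarino-type test), and deduces that $e^{u\cdot Z_t}-\int_0^t e^{u\cdot Z_s}[\imf{F}{u}+\imf{R}{u}\cdot Z_s]ds$ is a martingale; the Riccati equations for $\psi$ and $\phi$ then follow by differentiating $s\mapsto \se_z[e^{\psi(t-s,u)\cdot Z_s+\phi(t-s,u)}]$. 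You would need to replace your conditioning step by some such martingale argument for the proof to close.
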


Note that the non-negative coordinates are more difficult to handle. 
Indeed, the non-negative coordinates alone constitute an affine process with $n=0$, which is then called a multitype continuous-state branching process with immigration ($\cbi$) introduced (without immigration and with $m=2$) in \cite{MR0234531}. 
Once we analyze the case $n=0$, we will then get the general case by solving a linear differential equation driven by the solution when $n=0$. 
These real-valued coordinates constitute the Ornstein-Uhlenbeck part of the process, which is now not only driven by a L\'evy process but also by a sum of time changed L\'evy processes. 

Equation \eqref{affineProcessRepresentationEquation} represents a multiparameter time change equation proposed in  \cite{MR577310} to generalize the classical time change construction of Markov processes of Volkonskii (cf. \cite{MR0100919}, \cite[Vol 1, Ch 10]{MR0193671}). 
A multiparameter time change representation of affine processes was first proposed (in a weak sense) in \cite{MR2233549}; in that paper, the question of whether the affine process was adapted to the filtration of the L\'evy process was left open. 
Recently, there have been a number of results concerning this time change representation. 
For example, the PhD thesis \cite{gabrielliPhDThesis} (the relevant chapter is found in \cite{Gabrielli:2014aa}) 
proves existence 
(under additional but minor technical assumptions) 
for  a time change representation as in Theorem \ref{affineProcessRepresentationTheorem}.
%
A discrete space version of Theorem \ref{affineProcessRepresentationTheorem} has also been recently studied. 
Indeed, a construction of Galton-Watson processes (without immigration) in terms of multiparameter time changes of random walks is found in \cite{Chaumont:2013aa} in discrete time and  \cite{Chaumont:2014aa} in continuous time. 
More generally, the connection between time changes and changes of measure and the application to mathematical finance is explored in \cite{MR2779876}. 
The main contribution of our work is that we prove uniqueness of the pathwise representation in \eqref{affineProcessRepresentationEquation} (as well as for an accompanying inequality). 
Uniqueness is the main tool in the forthcoming stability analysis of the pathwise representation. 

We now state some continuity properties of the system of equations \eqref{affineProcessRepresentationEquation}. 
Consider a sequence, indexed by $l\geq 1$, of $m+1$ stochastic processes $X^{1,\cdot,l},\ldots, X^{m,\cdot,l}, Y^l$ which satisfy the upcoming hypothesis \defin{H} of p. \pageref{hypothesisH}. 
Consider also any sequence of numbers $0\leq \sigma_l\to 0$. 
The number $\sigma_l$ is interpreted as the discretization parameter to be used in an Euler type scheme as follows. 
When $\sigma_l>0$, let $Z^{j,l}$ and $C^{j,l}$, $1\leq j\leq m+n$, be defined recursively by means of
\begin{linenomath}
\begin{align}
\label{approximationAffineProcessRepresentationEquation}
C^{j,l}_0=0,\quad
Z^{j,l}_{\sigma_l k}&=\bra{\sum_{i=1}^m z^j_l+X^{i,j,l}\circ C^{i,l}_{\sigma_l k} +Y^{j,l}_{\sigma_l k}}^+
\quad
\text{and}\quad  C^{j,l}_{\sigma_l \paren{k+1}}=C^{j,l}_{\sigma_l k}+ Z^{j,l}_{\sigma_l k}\sigma_l
\intertext{when $1\leq j\leq m$, while for $1\leq j\leq n$ we only change the definition of $Z^{m+j,l}$ to}
\label{approximationOUAffineProcessRepresentationEquation}
Z^{m+j,l}_{\sigma_l k}&=z^j_l+\sum_{i=1}^m X^{i,m+j,l}\circ C^{i,l}_{\sigma_l k} +Y^{m+j,l}_{\sigma_l k}+\sum_{i=1}^n C^{m+i,l}_{\sigma_l k}\beta_{i,j}
\end{align}
\end{linenomath}
When $\sigma_l=0$, the forthcoming Lemma \ref{generalExistenceLemma} asserts that \eqref{affineProcessRepresentationEquation}, when driven by $X^{i,j,l}, Y^{j,l}$, admits a (global) solution (which could, in principle, explode). In that case, we let $Z^{j,l},C^{j,l}$ be any such solution. 
We recall in Subsection \ref{stabilityAnalysisSubsection} the definitions of the Skorohod $J_1$ topology and of the uniform $J_1$ topology. 
\begin{theorem}
\label{stochasticLimitTheorem}
Let $X^1,\ldots, X^m$ and $Y$ be as in Theorem \ref{affineProcessRepresentationTheorem}. 
Let $Z,C$ be the unique processes satisfying \eqref{affineProcessRepresentationEquation}. 

Suppose that $X^{1,\cdot, l},\ldots, X^{m,\cdot,l}, Y^{\cdot, l}$ are stochastic processes which satisfy hypothesis \defin{H} of p.  \pageref{hypothesisH} and such that 
$X^{i,\cdot, l}$ converges to $X^{i,\cdot }$ (and $Y^{\cdot, l}$ converges to $Y$) as $l\to\infty$. 
(The convergence can be weak or almost surely in the Skorohod $J_1$ topology when \eqref{affineProcessRepresentationEquation} has no explosion and in the uniform $J_1$ topology in case of explosion. ) Assume that $z^j_l\to z^j$. 

If $Z^{\cdot,l}$, $C^{\cdot, l}$ are any processes satisfying 
\eqref{approximationAffineProcessRepresentationEquation} and \eqref{approximationOUAffineProcessRepresentationEquation} when $\sigma_l>0$ or 
\eqref{affineProcessRepresentationEquation} with respect to the driving processes $X^{1,\cdot, l},\ldots, X^{m,\cdot,l}, Y^{\cdot, l}$ when $\sigma_l=0$
then $C^l\to C$ (with respect to the topology of uniform convergence on compact sets when there is no explosion and pointwise in case of explosion) and $Z^{i,l}\to Z^i$ for $1\leq i\leq m$ (with respect to the Skorohod $J_1$ topology when there is no explosion and in the uniform $J_1$ topology in case of explosion) as $l\to\infty$. 
(The convergence will be either weak or strong depending on the type of convergence of the $X^{i,\cdot, l}$ and $Y$.)
\end{theorem}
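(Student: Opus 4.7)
My overall strategy is to reduce the question to a single continuity statement for the solution map of \eqref{affineProcessRepresentationEquation} and to close by the uniqueness provided by Theorem \ref{affineProcessRepresentationTheorem}. Concretely, I would first establish tightness (or pointwise bounds, in the explosive regime) of $(Z^l,C^l)$ on finite horizons; then extract subsequential limits in the claimed topologies, pass to the limit in the equations that $(Z^l,C^l)$ satisfy, and identify each subsequential limit as a solution of \eqref{affineProcessRepresentationEquation} driven by $(X^i,Y)$; uniqueness then forces the full sequence to converge to $(Z,C)$. When the convergence of the driving data is only weak, I would first couple through a Skorohod representation and then run the same argument almost surely on the coupled space.

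To reduce the Euler case ($\sigma_l>0$) to the continuous-time one, I would recast the recursion \eqref{approximationAffineProcessRepresentationEquation}--\eqref{approximationOUAffineProcessRepresentationEquation} as a solution of \eqref{affineProcessRepresentationEquation} driven by time-discretized data $\widetilde X^{i,\cdot,l},\widetilde Y^{\cdot,l}$: extend $Z^{j,l}$ to a piecewise constant process between grid points so that $C^{i,l}_t=\int_0^t Z^{i,l}_s\,ds$ agrees with the Euler values at the grid and linearly interpolates in between; then take $\widetilde Y^{j,l}_t=Y^{j,l}_{\sigma_l\lfloor t/\sigma_l\rfloor}$ and define $\widetilde X^{i,j,l}$ to be constant on $[C^{i,l}_{\sigma_l k},C^{i,l}_{\sigma_l(k+1)})$ with value $X^{i,j,l}_{C^{i,l}_{\sigma_l k}}$, so that $\widetilde X^{i,j,l}\circ C^{i,l}$ matches the grid expression in \eqref{approximationAffineProcessRepresentationEquation}. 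Because $C^{i,l}$ stays locally bounded up to explosion and $\sigma_l\to 0$, the processes $\widetilde X^{i,\cdot,l},\widetilde Y^{\cdot,l}$ still satisfy hypothesis \defin{H} and converge to $(X^i,Y)$ in the same topology as $X^{i,\cdot,l},Y^{\cdot,l}$; both cases then collapse into the single question of continuity of the solution map of \eqref{affineProcessRepresentationEquation}.

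The core technical ingredient is continuity of the composition $(f,g)\mapsto f\circ g$ at $(X^{i,\cdot},C^i)$. Since $C^i$ is continuous and strictly increasing on $\set{Z^i>0}$, this is the classical setting in which $J_1$ composition is continuous; the delicate point is what happens on the closed set $\set{Z^i=0}$ where $C^i$ has flat stretches and a jump of the driver could be swallowed or split in the limit. Hypothesis \defin{H}, together with the L\'evy-process structure stipulated in Theorem \ref{affineProcessRepresentationTheorem} (subordination of off-diagonal entries, spectral positivity of $X^{i,i}$, and the prescribed Gaussian independence) should rule this out almost surely. Tightness of $(Z^l,C^l)$ on any $[0,T]$ contained in the non-explosive range follows from a pathwise Gronwall-type comparison, using monotonicity of all the off-diagonal drivers together with uniform moment estimates coming from convergence of the L\'evy exponents of the $X^{i,i,l}$.

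Once $C^{i,l}$ is shown to converge to $C^i$ uniformly on compacts and $Z^{i,l}$ to $Z^i$ in $J_1$ for $1\leq i\leq m$, the Ornstein-Uhlenbeck coordinates follow automatically: \eqref{approximationOUAffineProcessRepresentationEquation} expresses $Z^{m+j,l}$ as a linear functional of the already-convergent ingredients, and continuity of linear inhomogeneous equations driven by $J_1$-convergent c\`adl\`ag inputs closes the argument. The chief obstacle I anticipate is precisely the composition continuity at flat stretches of $C^i$; this is what forces the use of the uniform $J_1$ topology in the explosive case, since there neither $C^{i,l}$ nor $C^i$ is bounded on any fixed horizon and the alignment of jumps of $X^{i,\cdot,l}$ with those of $X^{i,\cdot}$ must be arranged uniformly in the argument, which plain $J_1$ on compacts does not deliver.
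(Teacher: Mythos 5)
Your overall architecture (tightness via Arzel\`a--Ascoli, subsequential limits, identification, uniqueness, recasting the Euler scheme as a time-discretized driver, and composition/addition continuity on Skorohod space) matches the paper's Section \ref{stabilityAnalysisSection}. But there is a genuine gap at the identification step. You propose to ``pass to the limit in the equations that $(Z^l,C^l)$ satisfy, and identify each subsequential limit as a solution of \eqref{affineProcessRepresentationEquation}'', and then invoke the uniqueness of Theorem \ref{affineProcessRepresentationTheorem}. This does not go through: since $f^{i,j}$ is only c\`adl\`ag and $c^{i,l}\to c^i$ uniformly, all one can conclude by Fatou is
\begin{equation*}
f^{i,j}_-\circ c^i \;\leq\; \liminf_l f^{i,j}_l\circ c^{i,l} \;\leq\; \limsup_l f^{i,j}_l\circ c^{i,l} \;\leq\; f^{i,j}\circ c^i ,
\end{equation*}
so a subsequential limit is only shown to satisfy the \emph{differential inequality} \eqref{differentialInequalityForLevyProcesses}, not the equation. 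Uniqueness for the equation is therefore not enough; one needs the strictly stronger statement that $C$ is the \emph{unique} solution of the inequality. This is Proposition \ref{differentialInequalityProposition} in the paper, and its proof is a substantial piece of work: it uses the lack of spontaneous generation of $C$ together with Lemma \ref{continuityAtConstancyIntervalsLemma}, which shows via quasi-left-continuity of the L\'evy processes (at the multiparameter stopping times $C^i_T$) that $X^{i}$ is a.s.\ continuous at $C^i_t$ whenever $C^i$ is constant to the right of $t$. Your remark that hypothesis \defin{H} and the L\'evy structure ``should rule this out almost surely'' points at the right phenomenon but supplies no argument; the flat stretches of $C^i$ are exactly where the work is.

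A second missing ingredient is the no-common-jumps property needed already for the non-negative coordinates, not only for the Ornstein--Uhlenbeck ones: to pass from $J_1$ convergence of each $X^{i,j,l}\circ C^{i,l}$ to $J_1$ convergence of the sum $Z^{j,l}=\sum_i X^{i,j,l}\circ C^{i,l}+Y^{j,l}$, one must know that the limits $X^{i,j}\circ C^i$, $X^{i',j}\circ C^{i'}$ ($i\neq i'$) and $Y^j$ a.s.\ never jump simultaneously. This is Corollary \ref{noCommonJumpsCorollary}, and in the paper it is \emph{not} a soft consequence of independence: it is extracted from the vanishing covariation of the time-changed exponential local martingales built in Lemma \ref{exponentialMartingaleLemma}. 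Your proposal does not address this point for $1\leq j\leq m$. Finally, your tightness argument via ``uniform moment estimates coming from convergence of the L\'evy exponents'' is unnecessary and somewhat misdirected: once $(c_l(t))$ is bounded, the right-hand derivatives $D_+c^j_l$ are bounded pathwise by $(m+1)K$ with $K$ a common bound for the convergent drivers on compacts, so the stability analysis is entirely deterministic; the probabilistic input is concentrated in Proposition \ref{differentialInequalityProposition} and Corollary \ref{noCommonJumpsCorollary}.
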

Note that the above limit theorem is either weak or strong, which follows from continuity properties of the multiparameter time change equations explored in Section \ref{stabilityAnalysisSection}. Indeed, we believe this is one strength of the time change representation versus, for example, the SDE representation which is found in the one-dimensional case in \cite{MR2584896} and \cite{MR3161481}. Indeed, even in the case of continuous sample paths, it is known that solving SDEs is a discontinuous operation of the driving processes. 
A manifestation of this is found in Wong-Zakai type phenomena (discovered in \cite{MR0183023}) and 
depending on the type of approximation to the driving processes one obtains limits to different SDEs,  as has been argued in \cite{courseRoughPathsFrizHairer}. 
On the other hand, Theorem \ref{stochasticLimitTheorem} does not depend on how one approximates the driving processes. We are not advocating, though, the use of one representation over another. The construction of \cite{MR3161481} is useful in the genealogical interpretation of continuous branching processes, constructing directly some of the flows in \cite{MR1990057}. 

From Theorem \ref{stochasticLimitTheorem} we deduce a limit theorem concerning multi-type Galton-Watson processes stated as Corollary \ref{GWCorollary}. In the one-dimensional case, Corollary \ref{GWCorollary} includes limit theorems found in \cite{MR0362529}, \cite{MR2225068} and \cite{MR3098685}. 
The multidimensional case has often been studied in the literature when the limit process is continuous, as in \cite{MR827331}.
We state a version without immigration, just to illustrate the kind of statement one can achieve as well as the technique. 
The technique can be adapted to the case of immigration as in Corollary 7 of \cite{MR3098685}. 
Let $(X^{1,\cdot, l}, 1\leq i\leq m)$ be independent $d$-dimensional random walks. Suppose that $X^{i,i,l}$ has jumps in $\z$ greater than $-1$ and that otherwise the coordinates have jumps in $\na$. 
Let $k^l=\paren{k^l_1,\ldots, k^l_m}\in \na^m$ be a sequence of starting states and define recursively the sequences 
$C^l=\paren{C^{j,l}, 1\leq j \leq m}$ and 
$Z^l=\paren{Z^{j,l}, 1\leq j \leq m}$ by
\begin{lesn}
C^l_0=0, 
\quad Z^l_0=k^l,
\quad Z^{j,l}_{n+1}=k^l+\sum_{i=1}^m X^{i,j,l}\circ C^{i,l}_n
\quad \text{and}\quad C^l_{n+1}=C^l_n+Z^l_{n+1}. 
\end{lesn}It is easy to see that for each $l$, $Z^l$ is a multitype Galton-Watson process 
such that the quantity of descendants of type $j$ of an individual of type $i$ 
has the same law as $X^{i,j,l}$ when $i\neq j$ and the law of $X^{i,i,l}+1$  in the remaining case. 
However, if $X^l$ is extended by constancy on intervals of the form $[n,n+1)$ with $n\in\na$, we see that $C^l$ is the Euler type approximation of span $1$ applied to $X^l$ that we have just introduced and $Z^l$ is the right-hand derivative of $C^l$. 

\begin{corollary}
\label{GWCorollary}
Let $X^{1,\cdot, l}$, $1\leq i\leq m$
 be independent $d$-dimensional random walks. 
Suppose that $X^{i,i,l}$ has jumps in $\z$ greater than $-1$ that otherwise the coordinates have jumps in $\na$. 

Assume that for each $i$ in $\set{1,\ldots,m}$ there are scaling constants $a_l$ and $b^i_l$ for $l\geq 1$ such that
\begin{lesn}
\paren{\frac{a_l}{b^j_l}X^{i,j,l}_{b^i_l t}, t\geq 0, 1\leq j\leq m}
\end{lesn}converges in Skorohod space (either almost surely or in distribution) to a L\'evy process $X^{i,\cdot}$. Furthermore, $a_l\to\infty$, $b^j_l/a_l\to \infty$ and $k^j_l$ is such that $k^j_l a_l/b^j_l\to z^j$.

Then, the scaled Galton-Watson processes
\begin{lesn}
\paren{\frac{a_l}{b_j}Z^{j,l}_{a_l t},t\geq 0, 1\leq j\leq m}
\end{lesn}%
started from $(k^{j}_l,1\leq j\leq m)$ converge in Skorohod space (either almost surely or in distribution) to the unique $\cb$ process $Z$ started from $z$ and constructed from $X$ and $Y=0$ in Theorem \ref{affineProcessRepresentationTheorem}. 
\end{corollary}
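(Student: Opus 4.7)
The plan is to recast the Galton--Watson recursion, after a one-step index shift and the prescribed scaling, as an instance of the Euler scheme \eqref{approximationAffineProcessRepresentationEquation}, and then to invoke Theorem~\ref{stochasticLimitTheorem}. First I would set $\hat{Z}^{j,l}_n := Z^{j,l}_{n+1}$ for $n\ge 0$, so that the GW recursion takes the form
\begin{esn}
\hat{Z}^{j,l}_n = k^j_l + \sum_{i=1}^m X^{i,j,l}\circ C^{i,l}_n,\qquad C^{j,l}_{n+1} = C^{j,l}_n + \hat{Z}^{j,l}_n,\qquad C^{j,l}_0 = 0,
\end{esn}
which is exactly the Euler scheme of \eqref{approximationAffineProcessRepresentationEquation} with step size $1$, no $Y$, and trivial $\beta$ (the jump-sign constraints on the $X^{i,j,l}$ keep $\hat{Z}^{j,l}_n$ non-negative, rendering the positive part automatic).

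Next I would set $\sigma_l = 1/a_l$ (which tends to $0$ since $a_l\to\infty$) and introduce the rescaled ingredients
\begin{esn}
\tilde{X}^{i,j,l}_t := \tfrac{a_l}{b^j_l}X^{i,j,l}_{b^i_l t},\qquad z^j_l := \tfrac{a_l}{b^j_l} k^j_l,\qquad \bar{Z}^{j,l}_{\sigma_l n} := \tfrac{a_l}{b^j_l}\hat{Z}^{j,l}_n,\qquad \bar{C}^{j,l}_{\sigma_l n} := \tfrac{C^{j,l}_n}{b^j_l},
\end{esn}
extended as right-continuous step functions in $t$. A direct substitution shows that $(\bar{Z}^{\cdot,l}, \bar{C}^{\cdot,l})$ satisfies \eqref{approximationAffineProcessRepresentationEquation} with step size $\sigma_l$, driving walks $\tilde{X}^{i,j,l}$, vanishing $Y^l$, trivial $\beta$, and initial state $(z^j_l)_j$. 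The hypotheses of the corollary then deliver the prerequisites of Theorem~\ref{stochasticLimitTheorem}: $\tilde{X}^{i,\cdot,l}\to X^{i,\cdot}$ in the Skorohod $J_1$ topology, $z^j_l\to z^j$, and $\sigma_l\to 0$.

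Applying Theorem~\ref{stochasticLimitTheorem} would then yield $\bar{Z}^{j,l}\to Z^j$ in the Skorohod $J_1$ topology, where $Z$ is the $\cb$ process constructed from $X$ and $Y=0$ in Theorem~\ref{affineProcessRepresentationTheorem}. The process named in the corollary, $(a_l/b^j_l)Z^{j,l}_{a_l\cdot}$ (\cadlag-extended), differs from $\bar{Z}^{j,l}$ only by a time shift of size $\sigma_l$, which is negligible in the Skorohod topology as $l\to\infty$, so the same limit follows for it. The main technical obstacle is the verification of hypothesis~\defin{H} for the rescaled walks $\tilde{X}^{i,j,l}$: this should reduce to the preservation of their joint independence structure and jump-sign constraints under the deterministic time--space scaling, combined with the assumed (weak or almost sure) Skorohod convergence to the L\'evy processes $X^{i,\cdot}$.
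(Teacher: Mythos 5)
Your proposal is correct and follows essentially the same route as the paper: both identify the rescaled Galton--Watson recursion as the Euler scheme of span $1/a_l$ driven by the scaled walks $\tfrac{a_l}{b^j_l}X^{i,j,l}_{b^i_l\cdot}$ and then invoke Theorem~\ref{stochasticLimitTheorem}. The only points the paper makes explicit that you leave as an ``obstacle'' are that the jump-sign constraints together with $a_l/b^j_l\to 0$ force the limits $X^{i,i}$ to be spectrally positive and $X^{i,j}$ ($i\neq j$) to be subordinators (so Theorem~\ref{affineProcessRepresentationTheorem} applies to construct $Z$), and that the explosive case needs the uniform $J_1$ upgrade borrowed from the one-dimensional argument.
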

We end this section with an application of Corollary \ref{GWCorollary}. 
Note that the different processes in Corollary \ref{GWCorollary} have scalings that have to be adequately balanced in order to obtain a limit (with non-trivial reproduction and immigration components). 
In order to exemplify how this could be done, 
let us start by considering the framework of Theorem 4.2.2 of \cite{MR827331}, giving a limit theorem for nearly critical multitype Galton-Watson processes under finite-variance assumptions. 
Indeed, consider a sequence of multitype Galton-Watson processes $Z^{\cdot, l}$ such that $p^{i,l}$ is the law of the offspring of an individual of type $i$. We then define the mean matrix $M$ by means of\begin{lesn}
M^l_{i,j}=\sum_{k\in\na^m} k_j\imf{p^{i,l}}{k}. 
\end{lesn}Assume that $M^{l}=\id+C_l/l$ where $C_l\to C$ as $l\to\infty$. Consider also the variance matrix $\sigma^l$ given by\begin{lesn}
\sigma^l_{i,j}
=\bra{\sum_k \paren{k_j-M_{i,j}^l}^2\imf{p^{i,l}}{k}}^{1/2}.
\end{lesn}Supose that $\sigma^l_{i,j}\to \sigma_i\indi{i=j}$ as $l\to\infty$ and that the following Lindeberg condition holds:
\begin{lesn}
\sum_{k_i\geq \eps\sqrt{n} } \paren{k_i-M^l_{i,i}}^2\imf{p^{i,l}}{k}\to 0
\end{lesn}as $l\to\infty$. 
Recall our construction of such a process in terms of random walks $
X^{i,\cdot, l}$ for $i=1,\ldots, m$. 
From our hypotheses, it follows that $X^{i,i,l}_{l^2\cdot}/l$ converges to $\sigma_i B^i+C_{i,i}\id$ where $B^i$ is a standard Brownian motion. Indeed, the convergence of one-dimensional distributions is deduced from the Lindeberg-Feller central limit theorem. Because of independence and stationarity of the increments this implies the convergence of finite-dimensional distributions and tightness is easily deduced from the Aldous criterion. 
For $i\neq j$, one sees that $X^{i,j,l}_{l^2\cdot}/l\to C_{i,j}\id$ as $l\to\infty$. 
Indeed, it suffices again to establish convergence of one-dimensional distributions which follow from Chebyshev's inequality. Tightness again follows from the Aldous criterion. 
Hence, Corollary \ref{GWCorollary} allows us to conclude that if $Z^l_0/l\to z$ then $Z^l_{l\cdot}/l$ converges weakly to a continuous branching process $Z$ with continuous sample paths. 
One can then use the martingales associated to $X$, as in \cite{MR577310}, to see that the generator of $Z$ is given by\begin{lesn}
\sum_{i=1}^m \frac{z_i\sigma_i^2}{2}\frac{\partial^2 }{\partial z_i^2}+\sum_{1\leq i,j\leq m} z_iC_{i,j}\frac{\partial }{\partial z_j}. 
\end{lesn}This fact can also be deduced from the infinitesimal parameters of $Z$ that are introduced in Section \ref{preliminariesSection} and from the proof of Theorem \ref{affineProcessRepresentationTheorem}. 

Our work continues and extends the one-dimensional situation covered in \cite{MR3098685}. 
There are however, important differences with that work. 
First of all, the discussion of uniqueness to \eqref{affineProcessRepresentationEquation} now relies on the concept of (lack of) spontaneous generation. 
This is to be contrasted to the previous analysis based on taking inverses. 
The multiple time changes make this one-dimensional approach unfeasible. 
On the other hand, we also take the point of view of multiparameter time changes from \cite{MR577310}, providing a very concrete (but general) example of its applicability. 
This has led to several simplifications when proving that solutions to \eqref{affineProcessRepresentationEquation} are affine processes. 

The paper is organized as follows. 
We first consider a deterministic framework for equation \eqref{affineProcessRepresentationEquation} when $n=0$ and analyze existence, uniqueness, and basic measurability questions. 
This is done in Section \ref{deterministicPathwiseAnalysisSection}. 
We then undertake the proof of Theorem \ref{affineProcessRepresentationTheorem} when $n=0$, which reduces basically to establishing the Markov property and constructing relevant martingales, in Section \ref{nonNegativeAffineProcessesSection}. 
The case of general $n$ is taken up in Section \ref{generalConstructionSection}. 
Finally, we pass to the stability of equation \ref{affineProcessRepresentationEquation}, which contains the proofs of Theorem \ref{stochasticLimitTheorem} and Corollary \ref{GWCorollary} in Section \ref{stabilityAnalysisSection}. 

\section{Preliminaries on affine processes}
\label{preliminariesSection}

Let $Z$ be an affine process with laws $\paren{\p_z,z\in \st}$. 
Let $\Phi$ and $\psi$ be defined as in Equation \eqref{affineExponentEquation}; applying the Markov property, 
we get the semi-flow property
\begin{linenomath}
\begin{equation}
\imf{\psi}{t+s,u}=\imf{\psi}{s,\imf{\psi}{t,u}}\quad\text{and}\quad \imf{\Phi}{t+s,u}=\imf{\Phi}{t,u}\imf{\Phi}{s,\imf{\psi}{t,u}}. 
\end{equation}
\end{linenomath}
From Theorem 5.1 in \cite{MR2851694} or Theorem 3.3 in \cite{MR3040553}, it is known that the following derivatives exist and are continuous as a function of $u$:
\begin{linenomath}
\begin{esn}
\imf{F}{u}=\left.\frac{\partial }{\partial t}\imf{\Phi}{t,u}\right|_{t=0}
\quad \text{and}\quad
\imf{R}{u}=\left.\frac{\partial }{\partial t}\imf{\psi}{t,u}\right|_{t=0}. 
\end{esn}\end{linenomath}From the semi-flow property, we deduce the so called Riccati equations
\begin{linenomath}
\begin{equation}
\label{riccatiEquations}
\frac{\partial }{\partial t}\imf{\Phi}{t,u}=\imf{\Phi}{t,u}\imf{F}{\imf{\psi}{t,u}}
\quad\text{and}\quad
\frac{\partial }{\partial t}\imf{\psi}{t,u}=\imf{R}{\imf{\psi}{t,u}}
\end{equation}
\end{linenomath}with the initial conditions
\begin{linenomath}
\begin{esn}
\imf{\Phi}{0,u}=1\quad\text{and}\quad 
\imf{\psi}{0,u}=u. 
\end{esn}
\end{linenomath}If $\Phi$ were non-zero and $\phi$ were continuous and satisfied $e^{\phi}=\Phi$, we would obtain the more familiar equation
\begin{linenomath}
\begin{esn}
\frac{\partial }{\partial t}\imf{\phi}{t,u}=\imf{F}{\imf{\psi}{t,u}}
\end{esn}
\end{linenomath}which gives
\begin{linenomath}
\begin{esn}
\imf{\phi}{t,u}=\int_0^t \imf{F}{\imf{\psi}{s,u}}\, ds. 
\end{esn}
\end{linenomath}

Furthermore, Theorem 2.7 in \cite{MR1994043} asserts that $F$ and $R$ have the following very specific form: if $X^1,\ldots, X^m$ and $Y$ are L\'evy processes satisfying the conditions of Theorem \ref{affineProcessRepresentationTheorem} then  $F$ and $R=\paren{R_1,\ldots, R_{m+n}}$ are the unique continuous functions such that
\begin{linenomath}
\begin{equation}
\label{characteristicExponentsDefinition}
e^{\imf{F}{u}}=\esp{e^{u\cdot  Y_1}}
\quad\text{and}\quad
e^{\imf{R_i}{u}}= \esp{e^{u \cdot X^i_1}}
\end{equation}
\end{linenomath}for $1\leq i\leq m$ while for $m+1\leq i\leq n$ we set
\begin{lesn}
\imf{R_{m+i}}{u}=\sum_{j=1}^n \beta_{i,j} u_{m+j}. 
\end{lesn}

Furthermore, Section 6 of \cite{MR1994043} discusses the (global) existence and uniqueness of the generalized Riccati equations of \eqref{riccatiEquations}. 

\section{Pathwise analysis of the multidimensional time change equation}
\label{deterministicPathwiseAnalysisSection}
Following \cite{MR3098685}, we begin by considering a deterministic system of time change equations appearing in Theorem \ref{affineProcessRepresentationTheorem} in the case of non-negative processes ($n=0$). 
Consider $m\paren{m+1}$ \cadlag\ functions labeled $\set{f^{i,j}, 1\leq i,j\leq m}$ and $\set{g^{j}, 1\leq j\leq m}$. 
These functions satisfy the following requirements: 
\begin{description}
\label{hypothesisH}
\item[H1] $f^{i,j}$ has no negative jumps if $i=j$ and is non-decreasing otherwise.
\item[H2] $g^{j}$ is non-decreasing.
\item[H3] $\imf{g^j}{0}+\sum_{i=1}^m \imf{f^{i,j}}{0}\geq 0$ for $1\leq j\leq m$. 
\end{description}
The above hypotheses are collectively denoted \defin{H}. 

We seek a solution to the following system of equations for the \cadlag\ function $h=\paren{h^1,\ldots, h^m}$:
\begin{equation}
\label{odeSystem}
\imf{h^j}{t}=\sum_{i=1}^m \imf{f^{i,j}\circ c^i}{t}+\imf{g^j}{t}\quad\text{for $1\leq j\leq m$}\quad
\quad\text{where}\quad \imf{c^j}{t}=\int_0^t \imf{h^j}{s}\, ds. 
\end{equation}
This system can also be thought of as an ordinary differential equation for $c$ when one notes that $h^j$ is the right-hand derivative of $c^j$. 
With this interpretation, we might want to use other initial conditions for $c$ rather than only zero. 
This amounts to shifting the functions $f^{i,j}$; note however, that the shifts must still satisfy \defin{H3}. 
\subsection{A basic monotonicity lemma and existence}
Our approach to the study of \eqref{odeSystem} is based on its monotonicity properties. 
We begin with a simple and useful case of this and postpone an elaboration of this idea which will be useful to obtain uniqueness. 
\begin{lemma}
\label{firstMonotonicityLemma}
Suppose that we have two sets of functions $P=(f^{i,j},g^j)$ and $\tilde P=(\tilde f^{i,j},\tilde g^j)$ satisfying hypothesis \defin{H}. 
Assume that $f^{i,j}\leq \tilde f^{i,j}$ and $g^j\leq \tilde g^j$ and that additionally, 
 for every $j\in\set{1,\ldots, m}$, either $f^{i,j}< \tilde f^{i,j}$ or $g^j< \tilde g^j$. 
If $h$ and $\tilde h$ are non-negative functions that satisfy \eqref{odeSystem} driven by $P$ and $\tilde P$ respectively then $c\leq \tilde c$. 
\end{lemma}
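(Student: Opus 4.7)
The plan is to argue by contradiction. For each coordinate $j$, set $\tau_j = \inf\set{t\geq 0: c^j(t)>\tilde c^j(t)}$ and let $\tau=\min_j \tau_j$. The statement $c\leq \tilde c$ is equivalent to $\tau=\infty$, so assume for contradiction that $\tau<\infty$. Since $c^j$ and $\tilde c^j$ are continuous (they are integrals of locally bounded functions $h^j,\tilde h^j$, which are non-negative by hypothesis), standard arguments show that $c^j(\tau_j)=\tilde c^j(\tau_j)$ whenever $\tau_j<\infty$, and in every right neighborhood of $\tau_j$ there are times at which $c^j$ strictly exceeds $\tilde c^j$. Write $J=\set{j:\tau_j=\tau}$; by definition $J\neq\emptyset$, and for every $i\notin J$ one has $c^i(\tau)\leq \tilde c^i(\tau)$, while for $i\in J$ one has equality.

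The core of the argument is to show that $\tilde h^j(\tau)-h^j(\tau)>0$ for each $j\in J$. Writing
\begin{esn}
\tilde h^j(\tau)-h^j(\tau)=\sum_{i=1}^m \bra{\tilde f^{i,j}(\tilde c^i(\tau))-f^{i,j}(c^i(\tau))}+\bra{\tilde g^j(\tau)-g^j(\tau)},
\end{esn}
I handle the three types of terms in the sum as follows. When $i\in J$, the equality $c^i(\tau)=\tilde c^i(\tau)$ collapses the bracket to $\tilde f^{i,j}(c^i(\tau))-f^{i,j}(c^i(\tau))\geq 0$; crucially, this covers the diagonal case $i=j\in J$, where the potentially non-monotone function $\tilde f^{j,j}$ is evaluated at equal arguments. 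When $i\notin J$ (so necessarily $i\neq j$), I split as $[\tilde f^{i,j}(\tilde c^i(\tau))-f^{i,j}(\tilde c^i(\tau))]+[f^{i,j}(\tilde c^i(\tau))-f^{i,j}(c^i(\tau))]$; the first summand is non-negative by hypothesis, and the second by the monotonicity of $f^{i,j}$ (hypothesis \defin{H1}, which gives monotonicity off the diagonal). Hypothesis \defin{H2} makes the $g$-term non-negative. The strict inequality assumption for the index $j\in J$ then upgrades at least one of these summands to be strictly positive, so $\tilde h^j(\tau)-h^j(\tau)>0$.

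To close the contradiction, I propagate this strict inequality into a right neighborhood. Because each $c^i$ is continuous and non-decreasing and each $f^{i,j},g^j$ is \cadlag, the function $s\mapsto h^j(s)$ (and likewise $\tilde h^j$) is right-continuous at $\tau$. Consequently $\tilde h^j-h^j>0$ on some $[\tau,\tau+\delta]$, and integrating,
\begin{esn}
\tilde c^j(s)-c^j(s)=\bra{\tilde c^j(\tau)-c^j(\tau)}+\int_\tau^s [\tilde h^j(u)-h^j(u)]\,du>0\quad\text{for }s\in(\tau,\tau+\delta],
\end{esn}
since the boundary term vanishes for $j\in J$. This contradicts the existence of times arbitrarily close to (and greater than) $\tau$ at which $c^j>\tilde c^j$.

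The main obstacle I anticipate is the diagonal term in the computation of $\tilde h^j(\tau)-h^j(\tau)$, since $f^{j,j}$ is only assumed to have no negative jumps and is not monotone. The resolution is precisely to choose the comparison time $\tau$ so that, on the index set $J$ where the problem manifests, continuity of $c^j$ forces the equality $c^j(\tau)=\tilde c^j(\tau)$, neutralizing the absence of monotonicity. This is why $\tau$ is defined as a minimum over coordinates rather than coordinate-by-coordinate, and it is the structural reason the lemma requires only the off-diagonal monotonicity in \defin{H1}.
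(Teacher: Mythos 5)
Your proof is correct and follows essentially the same strategy as the paper's: locate the first crossing time $\tau$, show that the right-hand derivatives satisfy a strict inequality there by exploiting equality of the time changes on the critical index set $J$ (which neutralizes the non-monotone diagonal terms) and monotonicity off the diagonal, then propagate by right-continuity to contradict the definition of $\tau$. The only difference is that the paper compares $c$ with the dilated function $\overline c^j(t)=\tilde c^j(\eps+\alpha t)$ rather than with $\tilde c^j$ directly; under the strict-inequality hypothesis of this lemma that device is not needed, and your direct comparison is a legitimate simplification.
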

\begin{proof}
Let
\begin{lesn}
P=\paren{f^{i,j},g^j,1\leq i,j\leq m}
\quad\text{and}\quad
\tilde P=\paren{\tilde f^{i,j},\tilde g^j,1\leq i,j\leq m}
\end{lesn}be a system of functions satisfying the assumptions of our lemma and $h, \tilde h$ the associated non-negative solutions to \eqref{odeSystem}. 
For any $\eps>0$ and any $\alpha>1$, define $\imf{\overline c^j}{t}=\imf{\tilde c^j}{\eps+\alpha t}$. 
Hence $\overline c^j$ has a \cadlag\ right-hand derivative $\overline h^j$ given by $\imf{\overline h^j}{t}=\alpha\imf{\tilde h^j}{\eps+\alpha t}$. 
We then define
\begin{linenomath}
\begin{esn}
\tau=\inf\set{t\geq 0: \imf{c^j}{t}>\imf{\overline c^j}{t}\text{ for some }j}
\end{esn}\end{linenomath}as well as the set $J$ of indices $j\in\set{1,\ldots, m}$ such that $c^j$ exceeds $\overline c^j$ strictly at some point of any right neighbourhood of $\tau$. If $j\in J$ then $\imf{c^j}{\tau}=\imf{\overline c^j}{\tau}$ while 
$\imf{c^i}{\tau}\leq \imf{\overline c^i}{\tau}$ for $i\neq j$, so that also $\imf{\tilde f^{i,j}\circ c^i }{\tau}\leq \imf{\tilde f^{i,j}\circ \overline c^i }{\tau}$ for $i\neq j$. 
We deduce the following for $j\in J$: 
\begin{linenomath}
\begin{align*}
0\leq \imf{h^j}{\tau}
&=
\sum_{i} \imf{f^{i,j}\circ c^i}{\tau}+\imf{g^j}{\tau}
\\&<
\sum_{i} \imf{\tilde f^{i,j}\circ c^i}{\tau}+\imf{\tilde g^j}{\tau} 
\\&<\alpha\bra{
\sum_{i} \imf{\tilde f^{i,j}\circ \overline c^i}{\tau}+\imf{\tilde g^j}{\eps+\alpha\tau}}=\imf{\overline h^j}{\tau}.
\end{align*}\end{linenomath}(Note that the right-hand side of the first strict inequality cannot be zero, which justifies the second strict inequality.) 
We deduce that $c^j$ remains below $\overline c^j$ in a right neighbourhood of $\tau$ which contradicts the definitions of $\tau$ and $J$. 
We deduce that $c^j\leq \overline c^j$ and, letting $\alpha$ go to $1$, that $c^j\leq \tilde c^j$. 
\end{proof}

We now tackle existence for \eqref{odeSystem} in the case when only $f^{j,j},1\leq j\leq m$ are not piecewise constant. 
The proof will be based on the observation that under the piecewise constant hypotheses, the system \eqref{odeSystem} is one-dimen\-sional on adequate intervals. 
The piecewise constant case will allow us to prove existence for \eqref{odeSystem} in general through the monotonicity proved in Lemma \ref{firstMonotonicityLemma}. 

\begin{lemma}
\label{piecewiseConstantExistenceLemma}
Let $\set{f^{i,j}, g^j,1\leq i,j\leq m}$ satisfy \defin{H}
and suppose that $f^{i,j}$ and $g^j$ are piecewise constant if $1\leq i,j\leq m$ and $i\neq j$. 
Then, there exists a solution $h=\set{h^j:1\leq j\leq m}$ to \eqref{odeSystem}%
. 
This solution exists on an interval $[0,\tau)$ and $c^j_{\tau-}=\infty$ for some $j$. 
\end{lemma}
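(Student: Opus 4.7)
The plan is to exploit the fact that on intervals where the cross-coupling data do not change, the system \eqref{odeSystem} decouples into $m$ independent one-dimensional time-change equations. Define the ``update times'' recursively by $\tau_0=0$ and
\[
\tau_{n+1} \;=\; \inf\bigl\{t>\tau_n : g^k \text{ jumps at } t \text{ for some } k,\ \text{or } c^i(t) \text{ hits a jump point of } f^{i,k} \text{ for some } i\neq k\bigr\},
\]
where the $c^i$ on the right-hand side refer to the functions already built on $[0,\tau_{n+1}]$. On $[\tau_n,\tau_{n+1})$ the quantities
\[
K_j^{(n)} \;=\; g^j(\tau_n)+\sum_{i\neq j} f^{i,j}\bigl(c^i(\tau_n)\bigr)
\]
are constant in $t$, so \eqref{odeSystem} reduces to the $m$ uncoupled one-dimensional equations $h^j(t)=f^{j,j}(c^j(t))+K_j^{(n)}$ with $c^j(\tau_n)$ as initial condition.

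To solve each decoupled equation I would invoke the one-dimensional existence result, as in the analogous analysis in \cite{MR3098685}: given $f^{j,j}$ \cadlag\ with no negative jumps and a non-negative value of the right-hand side at the starting point, one constructs a non-negative \cadlag\ solution by the explicit time-change identity
\[
\int_{c^j(\tau_n)}^{c^j(t)} \frac{du'}{f^{j,j}(u')+K_j^{(n)}} \;=\; t-\tau_n,
\]
valid as long as the denominator remains positive, and then freezing the solution ($h^j\equiv 0$) once the denominator first reaches zero. Since the piecewise constant $g^k$ and the piecewise constant $f^{i,k}$ ($i\neq k$) have locally finitely many jumps in their respective arguments and each $c^i$ is continuous, $\tau_{n+1}>\tau_n$ and the inductive construction is well-defined.

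Non-negativity of each $h^j$ has to be carried across every update time. Within $[\tau_n,\tau_{n+1})$ it is built into the one-dimensional construction. At $\tau_{n+1}$, using that $c^j$ is continuous,
\[
h^j(\tau_{n+1})-h^j(\tau_{n+1}-) \;=\; \Delta g^j(\tau_{n+1})+\sum_{i\neq j}\Delta f^{i,j}\bigl(c^i(\tau_{n+1})\bigr) \;\geq\; 0
\]
by \defin{H1}--\defin{H2}, so $h^j(\tau_{n+1})\geq 0$. Setting $\tau=\sup_n\tau_n$, the pieces glue to a \cadlag\ solution of \eqref{odeSystem} on $[0,\tau)$.

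The step I expect to require the most care is the explosion alternative. If $\tau<\infty$, then only finitely many of the $\tau_n$ can arise from $t$-jumps of some $g^k$ on $[0,\tau]$, and for fixed $i\neq k$ the function $f^{i,k}$ has only finitely many jumps on any bounded range of arguments; hence an accumulation $\tau_n\nearrow\tau<\infty$ forces some $c^i$ to traverse infinitely many jump points of some $f^{i,k}$ in bounded time, which requires $c^i(\tau-)=+\infty$. This is exactly where the piecewise-constant hypothesis on the cross-terms is essential: without it the jump points of $f^{i,k}$ could cluster at a finite value of $c^i$ and the scheme would need to be reworked.
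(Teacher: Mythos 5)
Your proof is correct and follows essentially the same route as the paper's: localize to the intervals between update times where the cross-terms $f^{i,j}\circ c^i$ ($i\neq j$) and $g^j$ are constant, solve the resulting decoupled one-dimensional equations by the Lamperti time-change with absorption at zero, glue using the monotonicity hypotheses \textbf{H1}--\textbf{H2} to preserve non-negativity, and derive the explosion alternative from the non-accumulation of the jump points of the piecewise constant data. The only cosmetic difference is that you make the non-negativity check at the update times explicit, which the paper leaves implicit.
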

The time $\tau$ is termed the explosion time of $c$. 

\begin{remark}
\label{measurabilityForUnidimensionalExistenceLemmaRemark}
For the one-dimensional case, existence follows from Theorem 1 in \cite{MR3098685} which asserts that the problem $\imf{\ivp}{f,0,x}$ consisting of a finding a function $c$ with a right-hand derivative $h$ which satisfies
\begin{linenomath}
\begin{esn}
\imf{\ivp}{f,0,x}:\quad h=f\circ c\quad \text{with}\quad\imf{c}{0}=x
\end{esn}\end{linenomath}admits,  
for any $x\geq 0$ and any \cadlag\ function $f$ such that $f$ has no negative jumps, a unique solution which lacks spontaneous generation. 
When $f(x)=0$, the only solution lacking spontaneous generation is the function $c(t)=x$. 
When $f(x)>0$, the unique solution can be constructed by a Lamperti type transformation obtained by first making zero absorbing after $x$; formally
\begin{linenomath}\begin{esn}
T=\inf\set{t\geq x: \imf{f}{t}=0}
\quad\text{and}\quad
\imf{\tilde f}{t}=\begin{cases}
\imf{f}{t}& t<T\\
0&t\in [T,\infty]
\end{cases}. 
\end{esn}\end{linenomath}We then define $i$ on $[x,\infty)$ by means of
\begin{linenomath}\begin{esn}
\imf{i}{y}=\int_x^y \frac{1}{\imf{\tilde f}{t}}\, dt. 
\end{esn}\end{linenomath}Note that $i$ is strictly increasing on $[x,T)$ and infinite on $(T,\infty)$. 
Then, let $c$ be the right-continuous inverse of $i$ (in the sense of Lemma 0.4.8 of \cite{MR1725357}). 
Note that $c$ is strictly increasing on $[0, i(T-)]$ and constant on $[I(T-),\infty]$ and by definition $c(0)=x$. 
Then, since the right-hand derivative of $i$ exists and equals $1/f$, then $c$ also admits a right-hand derivative (on $[0,\imf{i}{T-} )$), say $h$, and we have $h=1/(1/f\circ i^{-1})=f\circ c$. 
The function $c$ so constructed from $f$ is called the Lamperti transform of $f$ absorbed at its first zero after $x$. 
Note that $\imf{c}{\infty}=T$. 
In the one-dimensional setting, when $X$ is a spectrally positive L\'evy process, Proposition 2 of \cite{MR3098685} shows that there is a unique solution $C$ to $\imf{\ivp}{x+X,0, 0}$ (with right-hand derivative $Z$) which has zero as an absorbing state; if $T$ denotes the hitting time of zero of $x+X$, $\tilde X$ equals $X$ stopped at $T$, then $C$ is also the unique solution $\imf{\ivp}{x+X,0,0}$, so that $C_\infty=T$. 
This one dimensional result is important in our proof of uniqueness of solutions to \ref{affineProcessRepresentationEquation}. Since stopping a \cadlag\ process at a stopping time and looking at a \cadlag\ process at a random time are measurable transformations, we see that the Lamperti transformation is measurable on the Skorohod space of \cadlag\ trajectories with the $\sigma$-field generated by projections. This would hold even if we take the initial value $x$ to be random and measurable. 
\end{remark}

\begin{proof}
Suppose first that
\begin{linenomath}
\begin{esn}
f^{i,j}=\sum_{k=1}^\infty x^{i,j}_k\indi{[t^{i,j}_{k-1},t^{i,j}_k)}\quad g^j=\sum_{k=1}^\infty y^j_k \indi{[t^j_{k-1},t^j_k)},
\end{esn}\end{linenomath}where $x^{i,j}_{k-1}\leq x^{i,j}_{k}$ if $i\neq j$, $0=t^{i,j}_0\leq t^{i,j}_1\leq \cdots$, the sequence $t^{i,j}_k,k\geq 0$ has no accumulation points (similar assumptions hold for $g^j$) and additionally, for each $j$
\begin{linenomath}\begin{esn}
\imf{f^{j,j}}{0}+\sum_{i\neq j}x^{i,j}_1+y^j_1\geq 0
\end{esn}\end{linenomath}so that assumptions \defin{H} hold. 
Let $T_{i,j}$ (resp. $T_j$) denote the set of change points of the functions $f^{i,j}$ (resp. $g^j$):
\begin{linenomath}
\begin{esn}
T_{i,j}=\set{t^{i,j}_k:0\leq k}\quad\text{and} \quad T_j=\set{t^j_k:0\leq k}. 
\end{esn}\end{linenomath}Let $\tau_0=0$ and, for any $j=1,\ldots, m$,  let $\tilde c^j_1$ be the unique solution of the problem $\imf{\ivp}{f^j_1,0 , 0}$, where the function $f^j_1$ is given by
\begin{linenomath}
\begin{esn}
\imf{f^j_1}{t}=\imf{f^{j,j}}{t}+\sum_{i\neq j} \imf{f^{i,j}}{0}+\imf{g^j}{0}. 
\end{esn}
\end{linenomath}We now define the times
\begin{linenomath}
\begin{esn}
\tau^i_1=\inf\set{t>0: t\in T_i\text{ or } \imf{\tilde c^i_1}{t}\in \cup_{j\neq i}T_{i,j}}
\quad\text{and}\quad 
\tau_1=\min_i \tau^i_1. 
\end{esn}
\end{linenomath}Set $c^j$ equal to $\tilde c^j_1$ on $[0,\tau_1]$ and recursively define $\tilde c^j_{n+1}$ as the solution to $\imf{\ivp}{f^j_{n+1},0,\imf{c^j}{\tau_n}}$ where the function $f^j_{n+1}$ is given by
\begin{linenomath}
\begin{esn}
\imf{f^j_{n+1}}{t}=\imf{f^{j,j}}{t}+\sum_{i\neq j} f^{i,j}\circ \imf{c^i}{\tau_n}+\imf{g^j}{\tau_n}. 
\end{esn}
\end{linenomath}We then define
\begin{linenomath}
\begin{esn}
\tau^i_{n+1}=\inf\set{t>\tau_n: t\in T_i\text{ or }\imf{\tilde c^i_1}{t-\tau_n}\in \cup_{j\neq i}T_{i,j}}
\quad\text{and}\quad
\tau_{n+1}=\min_{i}\tau^i_{n+1}
\end{esn}
\end{linenomath}and let $\imf{c^j}{t}=\imf{\tilde c^j_{n+1}}{t-\tau_{n}}$ on $[\tau_n,\tau_{n+1}]$. 
We assert that $c=\paren{c^1,\ldots, c^m}$ solves \eqref{odeSystem}; the proof is by induction. 
However, note that the starting point of $\tilde c_n$ is chosen so that $c$ is continuous and has a \cadlag\ right-hand derivative. 
On $[0,\tau_1]$, $f^{i,j}\circ c^i$ and $g^j$ are constant and hence, equal to their value at zero. 
Hence, if we let $h^j$ stand for the right-hand derivative of $c^j$, we obtain the following equalities for any $t<\tau_1$
\begin{linenomath}
\begin{align*}
\imf{h^j}{t}
&=\imf{f^{j}_1\circ c^j}{t}
\\&=\imf{f^{j,j}\circ c^j}{t}+\sum_{i\neq j} \imf{f^{i,j}}{0}+\imf{g^j}{0}
\\&=\imf{f^{j,j}\circ c^j}{t}+\sum_{i\neq j} \imf{f^{i,j}\circ c^i}{t}+\imf{g^j}{t},
\end{align*}
\end{linenomath}which allow us to conclude that $c$ solves \eqref{odeSystem} on $[0,\tau_1]$. 
On the other hand, if we assume that $c$ solves \eqref{odeSystem} on $[0,\tau_n]$, then note that, by definition, $f^{i,j}\circ c^i$ and $g^j$ are constant on $[\tau_n,\tau_{n+1}]$. 
We deduce that for $t\in [\tau_n,\tau_{n+1}]$:
\begin{linenomath}
\begin{align*}
\imf{h^j}{t}
&=f^{j}_{n+1}\circ \imf{\tilde c^j_{n+1}}{t-\tau_n}
\\&=f^{j,j}\circ \imf{\tilde c^j_{n+1}}{t-\tau_n}+\sum_{i\neq j} f^{i,j}\circ \imf{c^j}{\tau_n}+g^j\circ \imf{c^j}{\tau_n}
\\&=f^{j,j}\circ \imf{c^j}{t}+\sum_{i\neq j} f^{i,j}\circ \imf{c^i}{t}+\imf{g^j}{t}
\end{align*}\end{linenomath}so that $c$ solves \eqref{odeSystem} on $[0,\tau_{n+1}]$.

Since $\tau_n$ increases in $n$, there are two possibilities: either $\tau_n\to \infty$ (in which case the solution we have constructed is a global solution) or $\imf{c^j}{\tau_n}\to \infty$  for some $j$ by definition of $\tau_n^j$, $\tau_n$, and the fact that the sets $T_{i,j}$ and $T_j$ have no accumulation points. 
In the latter case, $c$ explodes. 
\end{proof}

\begin{remark}
\label{measurabilityForMultidimensionalExistenceLemmaRemark}
As in Remark \ref{measurabilityForUnidimensionalExistenceLemmaRemark}, we note that if we apply the procedure of the above proof in the case of \cadlag\ stochastic processes (satisfying the conditions of Lemma \ref{piecewiseConstantExistenceLemma}) then the solutions are measurable. This follows because on adequate intervals (which are obtained by stopping), the solutions are unidimensional and are constructed through the Lamperti transformation. 
\end{remark}

We now tackle existence for \eqref{odeSystem}. 
\begin{lemma}
\label{generalExistenceLemma}
Let $\set{f^{i,j},g^j, 1\leq i,j\leq m}$ satisfy \defin{H}. 
Then, there exists $\tau>0$ such that a non-negative solution $h$ to \eqref{odeSystem} exists on $[0,\tau)$. 
Furthermore this solution explodes at $\tau$ and is maximal:
\begin{enumerate}
\item If $\imf{c^j}{t}=\int_0^t \imf{h^j}{s}\, ds$ then $\imf{c^j}{\tau-}=\infty$ for some $j$. 
\item If $\tilde h$ is another solution to \eqref{odeSystem} (with its corresponding $\tilde c$) then $\tilde c\leq c$ on the interval of existence of $\tilde h$.  
\end{enumerate}
\end{lemma}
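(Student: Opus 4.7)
The plan is to construct the maximal solution as a monotone decreasing limit of solutions to strict piecewise-constant upper approximations of the data, leveraging Lemmas \ref{piecewiseConstantExistenceLemma} and \ref{firstMonotonicityLemma} for existence and comparison. I fix a strictly decreasing sequence $\delta_n\downarrow 0$ and introduce piecewise-constant data $P^n=(f^{i,j,n},g^{j,n})$ with $f^{j,j,n}:=f^{j,j}$, $f^{i,j,n}(t):=\imf{f^{i,j}}{2^{-n}\lceil 2^n t\rceil}$ for $i\neq j$, and $g^{j,n}(t):=\imf{g^j}{2^{-n}\lceil 2^n t\rceil}+\delta_n$. Each $P^n$ satisfies \defin{H} (with $\delta_n>0$ providing strict slack in \defin{H3}), $P^n\downarrow P$ pointwise, and for $n<n'$ the $g$\nbd component of $P^n$ strictly exceeds that of $P^{n'}$. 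Lemma \ref{piecewiseConstantExistenceLemma} then yields solutions $c^n=(c^{1,n},\ldots,c^{m,n})$ with explosion times $\tau_n$, and Lemma \ref{firstMonotonicityLemma} applied to the strict pair $P^{n+1}<P^{n}$ gives $c^{n+1}\leq c^n$, so the $c^n$ are decreasing and the $\tau_n$ are increasing. I set $\tau^\star:=\sup_n\tau_n$ and, for $t<\tau^\star$, define $c(t):=\lim_n c^n(t)$.

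Next I verify that $c$ solves \eqref{odeSystem} on $[0,\tau^\star)$ by passing to the limit in the integral form $\imf{c^{j,n}}{t}=\int_0^t [\sum_i \imf{f^{i,j,n}}{\imf{c^{i,n}}{s}}+\imf{g^{j,n}}{s}]\,ds$. Fix $T<\tau^\star$ and pick $n_0$ with $T<\tau_{n_0}$; for $n\geq n_0$ and $s\in[0,T]$ the integrand is dominated by $\sum_i \imf{f^{i,j,n_0}}{\imf{c^{i,n_0}}{T}}+\imf{g^{j,n_0}}{T}<\infty$. For each $s$, the decreasing convergence $\imf{c^{i,n}}{s}\downarrow \imf{c^i}{s}$ combined with right-continuity of $f^{i,j}$, applied to $2^{-n}\lceil 2^n \imf{c^{i,n}}{s}\rceil\downarrow \imf{c^i}{s}$, gives $\imf{f^{i,j,n}}{\imf{c^{i,n}}{s}}\to \imf{f^{i,j}}{\imf{c^i}{s}}$; similarly $\imf{g^{j,n}}{s}\to \imf{g^j}{s}$. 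Dominated convergence yields the integral equation for $c$, and the process $\imf{h^j}{t}:=\sum_i \imf{f^{i,j}}{\imf{c^i}{t}}+\imf{g^j}{t}$ is \cadlag\ (since $c^i$ is continuous and $f^{i,j},g^j$ are \cadlag), hence is the right-hand derivative of $c^j$.

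For the maximality part, given any non-negative solution $\tilde c$ on $[0,\tilde\tau)$, Lemma \ref{firstMonotonicityLemma} applied to the strict pair $P<P^n$ yields $\tilde c\leq c^n$ on $[0,\min(\tilde\tau,\tau_n))$, and letting $n\to\infty$ gives $\tilde c\leq c$ on $[0,\min(\tilde\tau,\tau^\star))$. If $c$ happens to have a finite limit at $\tau^\star$, then by restarting the construction from that point with the appropriately shifted driving data one obtains an extension past $\tau^\star$; iterating (by Zorn applied to non-negative solutions ordered by extension, noting that at each stage the comparison is preserved) yields a truly maximal interval $[0,\tau)$ along which either $\tau=\infty$ or $\imf{c^j}{\tau-}=\infty$ for some $j$.

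The step I expect to be most delicate is the limit inside the composition $f^{i,j,n}\circ c^{i,n}$: the argument relies crucially on approximating from \emph{above} so that $2^{-n}\lceil 2^n\imf{c^{i,n}}{s}\rceil$ decreases to $\imf{c^i}{s}$ and the right-continuity of $f^{i,j}$ delivers the correct limit. The analogous construction from below would instead produce the left limit $\imf{f^{i,j}}{\imf{c^i}{s}-}$, which can differ from $\imf{f^{i,j}}{\imf{c^i}{s}}$ on sets of positive Lebesgue measure when $c^i$ is flat at a jump of $f^{i,j}$; this asymmetry is the reason for pairing upper approximations with the strict positive perturbation $\delta_n$ of $g$, which is needed to engage Lemma \ref{firstMonotonicityLemma} in the monotonicity step.
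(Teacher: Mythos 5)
Your construction is essentially the paper's: strict piecewise-constant upper approximations of the off-diagonal data, existence from Lemma \ref{piecewiseConstantExistenceLemma}, monotone decrease from Lemma \ref{firstMonotonicityLemma}, a dominated-convergence passage to the limit, and maximality by comparing any competitor with the approximants. Your explicit dyadic round-up plus right-continuity makes the pointwise convergence of $f^{i,j,n}\circ c^{i,n}$ cleaner than the paper's terse appeal to bounded convergence. (Minor slip: the dominating bound $\sum_i \imf{f^{i,j,n_0}}{\imf{c^{i,n_0}}{T}}$ is wrong for the diagonal term, since $f^{j,j}$ is not monotone; you need $\sup_{x\le \imf{c^{j,n_0}}{T}}\imf{f^{j,j}}{x}$, which is finite because c\`adl\`ag functions are bounded on compacts.)

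The genuine gap is in the explosion/maximality endgame. You define $\tau^\star=\sup_n\tau_n$ and do not prove that $\imf{c^j}{\tau^\star-}=\infty$ for some $j$; instead you fall back on ``restart and apply Zorn, noting that at each stage the comparison is preserved.'' That last clause is exactly what fails: after a restart, a competitor $\tilde c$ living past $\tau^\star$ solves the shifted system driven by $f^{i,j}(\tilde c^i(\tau^\star)+\cdot)$, while your extension is built from approximants of the system driven by $f^{i,j}(c^i(\tau^\star)+\cdot)$ with $\tilde c(\tau^\star)\le c(\tau^\star)$ but not equal; since the diagonal drivers $f^{j,j}$ are \emph{not} monotone, these two families of shifted data are not ordered, so Lemma \ref{firstMonotonicityLemma} as stated no longer yields $\tilde c\le c$ beyond $\tau^\star$, and statement (2) is not preserved across the restart. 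The paper sidesteps this entirely: it stops each $c_n$ at the first time $\tau_{n,K}$ a coordinate reaches level $K$, uses the resulting uniform derivative bounds and Arzel\`a--Ascoli to identify a single limit on $[0,\tau_K]$ which \emph{attains} the level $K$ exactly at $\tau_K$ (this uses that $c^{j,n}(\tau_{n,K})=K$ together with equicontinuity), and then lets $K\to\infty$; this shows the one limit function reaches every level, hence explodes at $\tau=\lim_K\tau_K$, so no restart is ever needed. You should either reproduce that $K$-truncation argument to show $\imf{c}{\tau^\star-}=\infty$ directly, or supply a version of Lemma \ref{firstMonotonicityLemma} valid for ordered (not equal) initial values of $c$ — noting that at the crossing time the coordinates in $J$ agree, so the non-monotone diagonal terms still match — before invoking any restart.
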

\begin{proof}
For $1\leq i,j\leq m$ with $i\neq j$ consider a sequence of \cadlag\ functions $f^{i,j}_n$ and $g^j_n$  which are piecewise constant, are strictly bigger than $f^{i,j}$ and $g^j$,  and decrease as $n\to \infty$ towards $f^{i,j}$ and $g^j$ respectively. 
We then set $f^{j,j}_n=f^{j,j}$. 
Using Lemma \ref{piecewiseConstantExistenceLemma}, we can consider for any $n$ a solution $h_n=(h^1_n,\ldots, h^m_n)$ to \eqref{odeSystem} driven by $\{f^{i,j}_n,g^j_n\}$
. 
By Lemma \ref{firstMonotonicityLemma}, we see that the cumulative population of $h_n$ exceeds the cumulative population of any solution to \eqref{odeSystem}. 

Fix any $K>0$ and use it to stop $c_n$ at the instant $\tau_{n,K}$ that any one of its coordinates reach $K$. 
Call the resulting function $\tilde c_n$. 
Since $c_{n+1}\leq c_n$ then $\tau_{n,K}\leq \tau_{n+1,K}$; set $\tau_K=\lim_n \tau_{n,K}$. 
Note that $\tilde c_n$ has a \cadlag\ derivative $\tilde h^n$ given by
\begin{linenomath}\begin{esn}
\imf{\tilde h^j_n}{t}=\indi{t\leq \tau_{n,K}} \imf{h^j_n}{t}
. 
\end{esn}\end{linenomath}Hence,  $\tilde h^j_n$ can be bounded on any interval $[0,t]$ by $m\max_{i,j}\inf_{x\leq K}\imf{f^{i,j}_n}{x}+\imf{g^j_n}{t}$, and can then be bounded in $n$ by construction of $f^{i,j}_n$ and $g^j_n$. 
By the Arzel\`a-Ascoli theorem (which applies since $\imf{\tilde c^j_n}{0}=0$), $\tilde c_n$ is sequentially compact. 
We now show that every subsequential limit coincides. 
Indeed, if $\tilde c$ is the (uniform) limit (on compact sets) of $\tilde c^j_{n_k}$ as $k\to\infty$, 
then the bounded convergence theorem implies that for any $t<\tau_K$:
\begin{linenomath}
\begin{esn}
\imf{\tilde c^j}{t}
=\lim_{k}\imf{\tilde c^j_k}{t}
=\lim_{k}\int_0^t \sum_i \imf{f^{i,j}_n\circ \tilde c^i_n}{s}+\imf{g^j_n}{s}\, ds
= \int_0^t \sum_i \imf{f^{i,j}\circ c^i}{s}+\imf{g^j}{s}\, ds.
\end{esn}%
\end{linenomath}
We conclude that $\tilde c$ admits a right-hand derivative $\tilde h$ on $[0,\tau_K)$ which satisfies \eqref{odeSystem} on $[0,\tau_K)$. 
However, $\tilde c$ is the maximal solution by construction (since we can apply Lemma \ref{firstMonotonicityLemma} to the approximations $c_n$), so that all subsequential limits agree on $[0,\tau_K]$. 
Finally, note that before $\tau_K$ the coordinates of $\tilde c$ have to be smaller than $K$ and that at $\tau_K$ some coordinate equals $K$. 
Hence $\tau_K$ coincides with the instant in which some coordinate of $\tilde c$ reaches $K$. 
By uniqueness, one can construct a function $c$ which coincides with $\tilde c$ on $[0,\tau_K)$, so that $c$ is defined and solves \eqref{odeSystem} on $[0,\tau)$ where $\tau=\lim_K\tau_K$. 
By construction, $c$ explodes at $\tau$ and is maximal in the class of solutions to \eqref{odeSystem}. 
%
%
%
\end{proof}
\begin{remark}
\label{measurabilityForGeneralExistenceLemmaRemark}
Recall that the approximations of the above proof are measurable in the case of applying them to \cadlag\ stochastic processes thanks to Remark \ref{measurabilityForMultidimensionalExistenceLemmaRemark}. 
Then, applying the construction to a \cadlag\ stochastic process $X$ satisfying hypotheses \defin{H}, we get another pair of stochastic processes $Z$ and $C$. 
Since $Z$ and $C$ are cadlag, then $X^{i,j}\circ C^i$ is also a stochastic process. 
\end{remark}

\subsection{Spontaneous generation and minimal solutions}
An interpretation for the one-dimensional case of \eqref{odeSystem} was proposed in \cite{MR3098685} by noting that if $f^{1,1}$ represents the breadth-first walk on a (combinatorial) forest representing the genealogy of a population with immigrants along each generation and $g^{1}$  codes the immigration to the population then $h^1$ is the population profile (that is, the sequence of generation sizes), while $c^1$ is the cumulative population. 
The multidimensional case of this discrete coding can be found in Subsection 2.2 of \cite{Chaumont:2013aa}, when $g^j=0$ for all $j$, and it shows that the one-dimensional interpretation still holds. 
In particular, the discrete interpretation gives sense to the following definition of lack of spontaneous generation: in the one dimensional case, a solution $h=f\circ c+g$ lacks spontaneous generation if $\imf{h}{s}=0$ (the population is zero at time $s$) and $g$ is constant on $[s,t]$ (there is no immigration on $[s,t]$) implies that $h=0$ on $[s,t]$ (the population remains at zero). 
Perhaps it is then surprising that there are solutions featuring spontaneous generation, but when $f$ is the typical path of a normalized Brownian excursion then they do exist (cf. \cite[Sect. 2]{MR3098685}). 
\begin{definition}
Let $\paren{f^{i,j},g^j}$ satisfy \defin{H}. 
We say that a solution $h=\paren{h^j}$ to \eqref{odeSystem} has no spontaneous generation if whenever $\imf{h^j}{s}=0$ for some $s\geq 0$ and for all $j$ in $J\subset\set{1,\ldots, m}$, we have that the strict increase of $c^j$ at $s$ for some $j\in J$ implies that either $g^j$ increases strictly at $s$ or there exists $i\not\in J$ such that $f^{i,j}\circ c^i$ increases strictly to the right of $s$. 
\end{definition}

As a remark, we mention that $h$ lacks spontaneous generation if and only if at any $s\geq 0$ such that the set $\imf{J}{s}=\set{j:\imf{h^j}{s}=0}$ is nonempty, the strict increase of $c^j$ at $s$ for some $j\in \imf{J}{s}$ implies that either $g^j$ increases strictly to the right of $s$ or there exists $i\not\in\imf{J}{s}$ such that $f^{i,j}$ increases strictly to the right of $\imf{c^i}{s}$. 

The definition works very well with induction on the dimension, in the sense that if $h=\paren{h^i,i\leq m}$ is a non-negative solution to \eqref{odeSystem} driven by $\paren{f^{i,j},1\leq i,j\leq m}$ and $\paren{g^j,j\leq m}$ without spontaneous generation and $m_1<m$,  we can then consider $h^1,\ldots, h^{m_1}$ as a solution, which will lack spontaneous generation, to \eqref{odeSystem} but driven by $f^{i,j}$ and $g^j+\sum_{i>m_1}f^{i,j}\circ c^i$ for $1\leq j\leq m_1$.

The importance of solutions lacking spontaneous generation is that they have monotonicity properties (see Lemma \ref{monotonicityLemma} below) and, consequently, they are minimal solutions to \eqref{odeSystem} as well as unique. 
In particular, if all solutions of \eqref{odeSystem} can be shown to have no spontaneous generation, then there is at most one solution. 
There are two cases when we can actually apply this technique.
First, when $g^j$ is strictly increasing for all $j$ since then solutions trivially have no spontaneous generation. 
Another example is when \eqref{odeSystem} is driven by L\'evy processes satisfying the hypotheses of Theorem \ref{affineProcessRepresentationTheorem}: we will show in Lemma  \ref{noSpontaneousGenerationForLevyProcessesLemma} that solutions have no spontaneous generation, which covers the uniqueness statement in Theorem \ref{affineProcessRepresentationTheorem}. 
\begin{lemma}
\label{monotonicityLemma}
Suppose that we have two sets of functions $P=(f^{i,j},g^j)$ and $\tilde P=(\tilde f^{i,j},\tilde g^j)$ satisfying hypothesis \defin{H}.
Assume that $f^{i,j}\leq \tilde f^{i,j}$ and $g^j\leq \tilde g^j$. 
If $h$ and $\tilde h$ are non-negative functions that satisfy \eqref{odeSystem} driven by $P$ and $\tilde P$ respectively and $h$ lacks spontaneous generation, then $c\leq \tilde c$. 
Hence, \eqref{odeSystem} admits at most one solution $h$ whose coordinates are non-negative and have no spontaneous generation. 
\end{lemma}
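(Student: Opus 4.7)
My approach is to adapt the perturbation argument of Lemma \ref{firstMonotonicityLemma}. For $\varepsilon>0$ and $\alpha>1$, set $\overline c^j(t)=\tilde c^j(\varepsilon+\alpha t)$, whose right-hand derivative equals $\overline h^j(t)=\alpha\,\tilde h^j(\varepsilon+\alpha t)$. The plan is to prove $c\leq\overline c$; sending $\alpha\downarrow 1$ and $\varepsilon\downarrow 0$ will then yield $c\leq\tilde c$. Arguing by contradiction, define $\tau=\inf\{t:c^j(t)>\overline c^j(t)\text{ for some }j\}$ together with the set $J$ of indices for which $c^j$ exceeds $\overline c^j$ strictly in every right neighbourhood of $\tau$, and assume $\tau<\infty$. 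Exactly as in Lemma \ref{firstMonotonicityLemma}, for each $j\in J$ one has $c^j(\tau)=\overline c^j(\tau)$ and $c^i(\tau)\leq\overline c^i(\tau)$ for $i\neq j$.

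Using $f^{i,j}\leq\tilde f^{i,j}$, $g^j\leq\tilde g^j$, the monotonicity of $\tilde f^{i,j}$ for $i\neq j$, and the monotonicity of $\tilde g^j$, for $j\in J$ I get
\[
h^j(\tau)\leq \sum_i \tilde f^{i,j}\!\circ\overline c^i(\tau)+\tilde g^j(\varepsilon+\alpha\tau)=\tilde h^j(\varepsilon+\alpha\tau)\leq \alpha\,\tilde h^j(\varepsilon+\alpha\tau)=\overline h^j(\tau).
\]
In contrast with Lemma \ref{firstMonotonicityLemma}, no strict inequality is built into the drivers, so $h^j(\tau)<\overline h^j(\tau)$ holds precisely when $\tilde h^j(\varepsilon+\alpha\tau)>0$. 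If this strict inequality occurs for some $j\in J$, right-continuity of $h^j,\overline h^j$ transfers it to a right neighbourhood of $\tau$; integrating gives $c^j\leq\overline c^j$ just after $\tau$, contradicting $j\in J$. It therefore remains to rule out the case in which $\tilde h^j(\varepsilon+\alpha\tau)=0$, and hence $h^j(\tau)=0$, for every $j\in J$.

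This is where the no-spontaneous-generation hypothesis on $h$ enters. Under the above assumption, each $c^j$ with $j\in J$ must strictly increase at $\tau$, since otherwise it would lag the non-decreasing $\overline c^j$ on a right neighbourhood. The no-spontaneous-generation property applied with the set $J$ at time $\tau$ then gives, for every $j\in J$, either (a) strict increase of $g^j$ at $\tau$, or (b) strict increase of $f^{i,j}\!\circ c^i$ to the right of $\tau$ for some $i\notin J$. In case (a), choosing $\varepsilon$ small so that $\tau+\varepsilon$ lies in the strict-increase window of $g^j$ yields $\tilde g^j(\varepsilon+\alpha\tau)\geq g^j(\tau+\varepsilon)>g^j(\tau)$; combined with $h^j(\tau)=0$ this rearranges to $\tilde h^j(\varepsilon+\alpha\tau)\geq \tilde g^j(\varepsilon+\alpha\tau)-g^j(\tau)>0$, a contradiction. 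Case (b) is the main technical obstacle: since $i\neq j$ the function $f^{i,j}$ is non-decreasing, and one combines $i\notin J$ (which forces $c^i\leq\overline c^i$ on a right neighbourhood of $\tau$) with the strict monotonicity of $f^{i,j}\!\circ c^i$ at $\tau^+$ and the \cadlag\ regularity of all objects to place $\overline c^i(\tau)$ strictly beyond a value where $f^{i,j}\!\circ c^i$ has already increased, forcing $\tilde f^{i,j}(\overline c^i(\tau))>f^{i,j}(c^i(\tau))$ and hence $\tilde h^j(\varepsilon+\alpha\tau)>0$. Once all cases are ruled out for arbitrary $\varepsilon$ and $\alpha$, letting $\varepsilon\downarrow 0$ and $\alpha\downarrow 1$ gives $c\leq\tilde c$; the uniqueness assertion follows by applying the comparison in both directions to two non-negative solutions each lacking spontaneous generation.
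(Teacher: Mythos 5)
Your setup (the $\eps$--$\alpha$ perturbation, the first crossing time $\tau$, the set $J$, and the dichotomy $\overline h^j(\tau)>0$ versus $\overline h^j(\tau)=0$) matches the paper's, and your treatment of the strict case, of case (a), and of the subcase $J=\set{1,\ldots,m}$ (where only (a) is available) is sound. The gap is in case (b). From ``$f^{i,j}\circ c^i$ increases strictly to the right of $\tau$'' together with ``$c^i\leq\overline c^i$ on a right neighbourhood of $\tau$'' you cannot conclude $\tilde f^{i,j}(\overline c^i(\tau))>f^{i,j}(c^i(\tau))$: the increase of $f^{i,j}\circ c^i$ happens strictly after $\tau$, so letting $\delta\downarrow 0$ in $\overline c^i(\tau+\delta)\geq c^i(\tau+\delta)>c^i(\tau)$ only yields $\overline c^i(\tau)\geq c^i(\tau)$, and equality $\overline c^i(\tau)=c^i(\tau)$ is entirely possible (for instance when $\tilde c^i$ is constant on $[\tau,\eps+\alpha\tau]$ at the value $c^i(\tau)$). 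Since the hypotheses only give $f^{i,j}\leq\tilde f^{i,j}$, one can then have $\tilde f^{i,j}(\overline c^i(\tau))=f^{i,j}(c^i(\tau))$, so $\tilde h^j(\eps+\alpha\tau)$ may well be $0$ and no contradiction arrives at the single time $\eps+\alpha\tau$. Nor can you instead compare $h^j(s)$ with $\overline h^j(s)$ for $s$ slightly larger than $\tau$: the terms $f^{i,j}\circ c^i(s)$ with $i\in J$ cannot be dominated by their tilded counterparts, because for those indices you only know $c^i(\tau)=\overline c^i(\tau)$, not an inequality after $\tau$. This mutual entanglement of the coordinates in $J$ is the real difficulty, and your argument does not address it.

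The paper closes exactly this case by induction on the dimension. When $\overline h^j(\tau)=0$ for all $j\in J$ and $J\subsetneq\set{1,\ldots,m}$, relabel $J=\set{1,\ldots,m_1}$ with $m_1<m$; then $(h^j)_{j\leq m_1}$ solves the $m_1$-dimensional system driven by $(f^{i,j})_{i,j\leq m_1}$ and the modified immigration $g^j+\sum_{i>m_1}f^{i,j}\circ c^i$, which still lacks spontaneous generation, and on a right neighbourhood of $\tau$ these modified immigrations are dominated by $\tilde g^j+\sum_{i>m_1}\tilde f^{i,j}\circ\overline c^i$ precisely because $c^i\leq\overline c^i$ there for $i\notin J$. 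The induction hypothesis applied to this lower-dimensional system gives $c^j\leq\overline c^j$ for $j\in J$ near $\tau$, contradicting the definition of $J$. You need this (or an equivalent) dimension-reduction step; as written, case (b) does not close.
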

The above lemma also tells us that solutions without spontaneous generation are minimal in the sense that their primitive is a lower bound for the primitive of any other solution. 
\begin{proof}
This proof is an elaboration of the proof of Lemma \ref{firstMonotonicityLemma}. 
We proceed by induction. 
Let $m=1$, let $P=\paren{f,g}$  and $\tilde P=\paren{\tilde f,\tilde g}$ satisfy hypothesis \defin{H} and let $h$ and $\tilde h$
be non-negative solutions to \eqref{odeSystem} driven by $P$ and $\tilde P$ and lacking spontaneous generation. 
Let $\eps>0$ and $\alpha>1$ and use them to define $\overline c$ by means of $\imf{\overline c}{t}=\imf{\tilde c}{\eps+\alpha t}$. 
Let
\begin{linenomath}
\begin{esn}
\tau=\inf\set{t\geq 0: \imf{c}{t}>\imf{\overline c}{t}}.
\end{esn}\end{linenomath}

Note that $\overline c$ has a \cadlag\ right-hand derivative $\overline  h$ given by $\imf{\overline h}{t}=\alpha \imf{\tilde h}{\eps+\alpha t}$. 
If $\imf{\overline h}{\tau}>0$, since $\imf{c}{\tau}=\imf{\overline c}{\tau}$, 
our assumptions give
\begin{linenomath}
\begin{align*}
\imf{h}{\tau}
&=\imf{f\circ c}{\tau}+\imf{g}{\tau}
\\&\leq \imf{\tilde f\circ \overline c}{\tau}+\imf{g}{\tau}
\\&<\alpha\bra{\imf{\tilde f\circ \overline c}{\tau}+\imf{\tilde g}{\eps+\alpha\tau}}=\imf{\overline h}{\tau}.
\end{align*}
\end{linenomath}Hence, $c\leq \overline c$ on a right neighbourhood of $\tau$ contradicting its definition. 
On the other hand, if $\imf{\overline h}{\tau}=0$, 
then we can only infer, as in the previous display, that
\begin{linenomath}
\begin{align*}
0
\leq \imf{h}{\tau} 
&=\imf{f\circ c}{\tau}+\imf{g}{\tau}
\\&\leq 
\imf{f\circ c}{\tau}+\imf{g}{\eps+\alpha\tau}
\\&\leq\imf{\tilde f\circ \tilde c}{\tau}+\imf{\tilde g}{\eps+\alpha\tau}
=0.
\end{align*}
\end{linenomath}We conclude that $g$ is constant on $[\tau,\eps+\alpha\tau]$ which, by lack of spontaneous generation, shows that $h=0$ on the same interval so that $c$ cannot exceed $\tilde c$ in any small enough right neighbourhood of $\tau$. 
We conclude that $\imf{c}{t}\leq \imf{\tilde c}{\eps+\alpha\tau}$ for any $t\geq 0$, any $\eps>0$ and any $\alpha>1$. Hence $c\leq \tilde c$. 

Let $m\geq 2$. 
Suppose now that the monotonicity statement $c\leq \tilde c$ is true for any solution to \eqref{odeSystem} of dimension strictly less than $m$. 
Let $P=\paren{f^{i,j},g^j,1\leq i,j\leq m}$ and $\tilde P=\paren{f^{i,j},g^j,1\leq i,j\leq m}$ be a system of functions satisfying the assumptions of our lemma in dimension $m$ and $h$ and $\tilde h$ the associated non-negative solutions to \eqref{odeSystem} without spontaneous generation. 
We proceed as in the one dimensional case: for any $\eps>0$ and any $\alpha>1$, define $\imf{\overline c^j}{t}=\imf{\tilde c^j}{\eps+\alpha t}$. 
Hence $\overline c^j$ has a \cadlag\ right-hand derivative $\overline h^j$ given by $\imf{\overline h^j}{t}=\alpha\imf{\tilde h^j}{\eps+\alpha t}$. 
We then define
\begin{linenomath}
\begin{esn}
\tau=\inf\set{t\geq 0: \imf{c^j}{t}>\imf{\overline c^j}{t}\text{ for some }j}
\end{esn}\end{linenomath}as well as the set $J$ of indices $j\in\set{1,\ldots, m}$ such that $c^j$ exceeds $\overline c^j$ strictly at some point of any right neighbourhood of $\tau$. If $j\in J$ then $\imf{c^j}{\tau}=\imf{\overline c^j}{\tau}$ while 
$\imf{c^i}{\tau}\leq \imf{\overline c^i}{\tau}$ for $i\neq j$. 
If $\imf{\overline h^j}{\tau}>0$ for some $j\in J$, we infer that
\begin{linenomath}
\begin{align*}
\imf{h^j}{\tau}
&=\imf{f^{j,j}\circ c^j}{\tau}+\sum_{i\neq j} \imf{f^{i,j}\circ c^i}{\tau}+\imf{g^j}{\tau}
\\&<\alpha\bra{\imf{\tilde f^{j,j}\circ \overline c^j}{\tau}+\sum_{i\neq j} \imf{\tilde f^{i,j}\circ \overline c^i}{\tau}+\imf{\tilde  g^j}{\eps+\alpha\tau}}=\imf{\overline h^j}{\tau}.
\end{align*}\end{linenomath}We deduce that $c^j$ remains below $\overline c^j$ in a right neighbourhood of $\tau$ which contradicts the definitions of $\tau$ and $J$. 
Hence, we can assume that $\imf{\overline h^j}{\tau}=0$ for every $j\in J$. 
Note that if $J=\set{1,\ldots, m}$ then
\begin{linenomath}\begin{align*}
0
\leq \imf{h^j}{\tau}
&=\sum_{i} \imf{f^{i,j}\circ c^i}{\tau}+ \imf{g^j}{\tau}
\\&\leq \sum_{i} \imf{f^{i,j}\circ c^i}{\tau}+ \imf{g^j}{\eps+\alpha\tau}
\\&\leq \sum_{i} \imf{\tilde f^{i,j}\circ \overline  c^i}{\tau}+ \imf{\tilde g^j}{\eps+\alpha\tau}
=0.
\end{align*}\end{linenomath}We conclude not only that $\imf{h^j}{\tau}=0$, but also that $g^j$ is constant on $[\tau,\eps+\alpha\tau]$, which implies, by lack of spontaneous generation, that $h^j$ is constant on $[\tau,\eps+\alpha\tau]$, which contradicts the definition of $\tau$. 
Hence, we can assume that $J\subsetneq \set{1,\ldots, m}$ and by relabelling, we write $J=\set{1,\ldots, m_1}$ where $m_1<m$. 
For every $j>m_1$, $c^j\leq \overline c^j$ in a right neighbourhood of $\tau$. 
Also, note that $c^j,j\leq m_1$ solves system \eqref{odeSystem} in dimension $m_1$ when driven by $\paren{f^{i,j} ,i,j\leq m_1}$ and $g^j+\sum_{i>m_1}f^{i,j}\circ c^j$. The same remark holds for $\overline c^j,j\leq m_1$. Since this system have dimension strictly less than $m$ and the reduced system $h_1,\ldots, h^{m_1}$ has no spontaneous generation, monotonicity holds for them and we can conclude that $c^j\leq \overline c^j, j\leq m_1$ in a right neighbourhood of $\tau$, which again contradicts the definition of $\tau$. As before, we conclude that $c\leq \tilde c$. 

Finally, suppose that $h$ and $\tilde h$ are two non-negative solutions to \eqref{odeSystem} which lack spontaneous generation and are driven by the same functions $P$. 
Applying our monotonicity statement, we see that $c=\tilde c$ which then implies $h=\tilde h$. 
\end{proof}

\subsection{Further consequences in the stochastic setting}
We now show that the process $C$ is a multiparameter random time change in the sense of \cite[Ch. 6]{MR838085}. 
For this, consider the \sa
\begin{linenomath}\begin{equation}
\label{multiparameterFiltrationDefinition}
\F^\circ_{t_1,\ldots, t_{m},t}=\F^{X^1}_{t_1}\vee\cdots\vee \F^{X^m}_{t_m}\vee \F^Y_{t}. 
\end{equation}\end{linenomath}

\begin{lemma}
\label{measurabilityPropertiesLemma}
Let $X^1,\ldots, X^m$ and $Y$ be a stochastic process satisfying hypotheses \defin{H}. 
Let $Z$ be the solution to \eqref{affineProcessRepresentationEquation} (with $n=0$) such that its primitive $C$ is maximal. 
Then\begin{linenomath}
\begin{align*}
\sag{Z^j_s,C^j_s, Y^j_s,X^{i,j}\circ C^i_s: 0\leq s\leq t, 0\leq i,j\leq m}\cap\set{C^i_t\leq t_i,1\leq i\leq m}
\subset \F^\circ_{t_1,\ldots, t_m,t}.
\end{align*}\end{linenomath}
\end{lemma}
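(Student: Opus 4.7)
The plan is to trace the measurability of $Z$ and $C$ through the explicit construction carried out in the proofs of Lemmas \ref{piecewiseConstantExistenceLemma} and \ref{generalExistenceLemma}, keeping track at every stage of which portion of each driving process is actually consulted. Throughout, the fundamental monotonicity fact exploited is that $C^i$ is non-decreasing, so that $X^{i,j}\circ C^i_s$ for $0\le s\le t$ is a measurable functional of the restriction of $X^{i,j}$ to $[0,C^i_t]$; in particular, on the event $\{C^i_t\le t_i\}$ it is measurable with respect to $\F^{X^{i,j}}_{t_i}\subset \F^\circ_{t_1,\dots,t_m,t}$. Thus the lemma will follow once we show that $Z^j_s$ and $C^j_s$ for $s\le t$ are built from the values $\{X^{i,j}_u:u\le C^i_t\}$ and $\{Y^j_u:u\le t\}$ in a measurable way.

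I would first establish the claim in the piecewise-constant setting of Lemma \ref{piecewiseConstantExistenceLemma}. Here the construction proceeds in successive rounds separated by the random times $\tau_n$, and on each piece $[\tau_n,\tau_{n+1}]$ the function $\tilde c^j_{n+1}$ is obtained from the initial-value problem $\imf{\ivp}{f^j_{n+1},0,\imf{c^j}{\tau_n}}$ whose driver $f^j_{n+1}$ is built from $f^{j,j}$ restricted to $[C^j_{\tau_n},\infty)$ together with the already-computed quantities $f^{i,j}\circ c^i(\tau_n)$ and $g^j(\tau_n)$. By the measurability of the Lamperti transform recalled in Remark \ref{measurabilityForUnidimensionalExistenceLemmaRemark}, one can prove by induction on $n$ that the stopped processes $C^j_{\cdot\wedge\tau_n}$ and $Z^j_{\cdot\wedge\tau_n}$ are measurable with respect to $\bigvee_{i=1}^m \F^{X^i}_{C^i_{\tau_n}}\vee \F^Y_{\tau_n}$; intersecting with $\{C^i_t\le t_i,\,1\le i\le m\}$ and observing that $\tau_n\wedge t\le t$ yields the desired inclusion.

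For the general case treated in Lemma \ref{generalExistenceLemma}, I would choose the approximating piecewise constant functions $f^{i,j}_n$ and $g^j_n$ to be canonical measurable functionals of the paths of $X^{i,j}$ and $Y^j$ respectively—e.g.\ dyadic upper step approximations—so that the restriction of $f^{i,j}_n$ to $[0,s]$ is a measurable function of $X^{i,j}$ restricted to $[0,s]$, and likewise for $g^j_n$. The previous paragraph then applies to each approximate solution $(Z_n,C_n)$, and since $\tilde C_n\downarrow \tilde C$ pointwise and $Z=\tilde h$ arises as the right-hand derivative of the limit, the measurability passes to the limit. Finally, $X^{i,j}\circ C^i$ inherits the right-filtration property because $C^i$ is continuous and non-decreasing.

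The main obstacle, and the step I would be most careful about, is the multiparameter Galmarino-type bookkeeping needed to conclude that the random indices $C^i_{\tau_n}$ really are admissible stopping indices for the multiparameter filtration $\F^\circ_{\cdot,\dots,\cdot,\cdot}$ of \eqref{multiparameterFiltrationDefinition}, in the sense of \cite[Ch.\ 6]{MR838085}. This ultimately comes down to the fact that $\{C^i_{\tau_n}\le t_i\}\cap\{\tau_n\le t\}$ can be decided from $X^i$ observed up to $t_i$ and $Y$ observed up to $t$, which is precisely what the recursive construction provides; once this is verified, the stated containment is immediate from the induction.
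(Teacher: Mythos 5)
Your overall intuition is right, but the proposal has a genuine gap at exactly the point you flag as ``the main obstacle,'' and you do not supply the device that closes it. The assertion that $X^{i,j}\circ C^i_s$, $s\le t$, is ``a measurable functional of the restriction of $X^{i,j}$ to $[0,C^i_t]$'' and hence lies in $\F^{X^{i,j}}_{t_i}$ on $\{C^i_t\le t_i\}$ is not a self-contained measurability statement: $C^i_t$ is random and depends on all the drivers, so ``restriction to $[0,C^i_t]$'' presupposes the very adaptedness being proved. Likewise, your inductive claim that $C^j_{\cdot\wedge\tau_n}$ is measurable with respect to $\bigvee_i\F^{X^i}_{C^i_{\tau_n}}\vee\F^Y_{\tau_n}$ invokes $\sigma$-fields at random multiparameter times whose definition requires first knowing that $(C^1_{\tau_n},\ldots,C^m_{\tau_n},\tau_n)$ is a multiparameter stopping time --- which is essentially the content of the lemma. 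Your final paragraph acknowledges this circularity but resolves it only by assertion (``which is precisely what the recursive construction provides''); that assertion is the lemma, not a proof of it.

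The clean way to break the circle --- and the route the paper takes --- is a Galmarino-type comparison with \emph{deterministically} stopped drivers: let $\tilde X^i=X^i(\cdot\wedge t_i)$ and $\tilde Y=Y(\cdot\wedge t)$, and let $\tilde Z,\tilde C$ be the maximal solution driven by these. By the measurability of the construction (Remarks \ref{measurabilityForUnidimensionalExistenceLemmaRemark}--\ref{measurabilityForGeneralExistenceLemmaRemark}, which you correctly invoke), $\tilde Z$, $\tilde C$ and $\tilde X^{i,j}\circ\tilde C^i$ are manifestly $\F^\circ_{t_1,\ldots,t_m,t}$-measurable, since they are functions of the stopped paths. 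The only remaining step is the pathwise identity: on the event $\{C^i_t\le t_i,\ 1\le i\le m\}$ the original construction never consults $X^i$ beyond $t_i$ nor $Y$ beyond $t$, so $Z=\tilde Z$, $C=\tilde C$ and $X^{i,j}\circ C^i=\tilde X^{i,j}\circ\tilde C^i$ on $[0,t]$. Intersecting with the event then gives the stated inclusion directly, with no need to define $\sigma$-fields at random times or to carry the induction on the rounds $\tau_n$ at the level of filtrations. Your round-by-round bookkeeping could in principle be made to work, but only after introducing this stopped-driver comparison (or an equivalent Galmarino test), so as written the argument is incomplete at its decisive step.
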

Recall that the solution constructed in Lemma \ref{generalExistenceLemma} has a maximal primitive. 
The proof will be based on a Galmarino type test in the multiparameter setting. (Cf. \cite{MR0174082} and \cite[Ex. 1.4.21]{MR1725357}.) 
\begin{proof}
Let $Z$, $C$ be as in the statement. 
Consider also the solution $\tilde Z$ to \eqref{affineProcessRepresentationEquation} (with $m=0$), but now driven by $X^i$ stopped at $t_i$ (denoted $\tilde X^i$) for $1\leq i\leq m$, by $Y$ stopped at $t$ (denoted $\tilde Y$), and such that its primitive $\tilde C$ is maximal. 
Analysing the construction of $C$, $Z$, $\tilde C$ and $\tilde Z$ we note that if $C^i_t\leq t_i$ for $1\leq i\leq m$ then $Z=\tilde Z$, $X^{i,j}\circ C^i=\tilde X^{i,j}\circ \tilde C^i$ and $C=\tilde C$  on $[0,t]$. 
Since $\tilde Z_t$, $\tilde X^{i,j}\circ \tilde C^i$ and $\tilde C^i$ are measurable functions of $\tilde X^{i,j}$, the statement follows. 
\end{proof}

We now study the uniqueness of \eqref{affineProcessRepresentationEquation} when $n=0$. 
\begin{lemma}
\label{noSpontaneousGenerationForLevyProcessesLemma}
Let $X^1,\ldots, X^m$ and $Y$ be L\'evy processes satisfying the assumptions of Theorem \ref{affineProcessRepresentationTheorem} when $n=0$. 
Then, almost surely, solutions $Z$ to \eqref{affineProcessRepresentationEquation} have no spontaneous generation. 
\end{lemma}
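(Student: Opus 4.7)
The plan is to argue by contradiction, combining the multiparameter strong Markov property (enabled by Lemma \ref{measurabilityPropertiesLemma}) with the one-dimensional Lamperti result recalled in Remark \ref{measurabilityForUnidimensionalExistenceLemmaRemark}, namely Proposition 2 of \cite{MR3098685}.

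\textbf{Setup and first reduction.} Suppose, for contradiction, that on an event $A$ of positive probability some solution $Z$ exhibits spontaneous generation. Since there are only finitely many subsets $J\subset\{1,\ldots,m\}$ and witnesses $j^\ast\in J$, and since a positive flatness window can be approximated by rationals, I can fix all these parameters and a rational interval $[q,q+1]$, and let $T$ be the first $s\in[q,q+1]$ at which spontaneous generation with index set $J$ and witness $j^\ast$ occurs (with flatness at least $\eta$). Thanks to Lemma \ref{measurabilityPropertiesLemma}, $T$ is a stopping time for the multiparameter filtration $(\Fc_{t_1,\ldots,t_m,t})$ realized at $(C^1_T,\ldots,C^m_T,T)$, and on $\{T<\infty\}$: $Z^j_T=0$ for every $j\in J$, $C^{j^\ast}_{T+t}>C^{j^\ast}_T$ for every $t>0$, $Y^{j^\ast}$ is constant on $[T,T+\eta]$, and $X^{i,j^\ast}$ is constant on $[C^i_T,C^i_T+\eta]$ for every $i\notin J$.

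\textbf{Strong Markov reduction to zero initial state.} Applying the multiparameter strong Markov property at $T$, the shifted processes $\tilde X^{i,j}(u):=X^{i,j}(C^i_T+u)-X^{i,j}(C^i_T)$ and $\tilde Y^j(t):=Y^j(T+t)-Y^j(T)$ are independent Lévy processes with the same laws as the originals, independent of $\Fc_{C^1_T,\ldots,C^m_T,T}$. Writing $A_j(t):=C^j_{T+t}-C^j_T$ for $j\in J$, the reduced equation for $Z^{j^\ast}$ on $[T,T+\eta]$ takes the form
\begin{linenomath}
\begin{equation*}
Z^{j^\ast}_{T+t}=\tilde X^{j^\ast,j^\ast}(A_{j^\ast}(t))+\sum_{i\in J,\,i\neq j^\ast}\tilde X^{i,j^\ast}(A_i(t)),
\end{equation*}
\end{linenomath}
with $A_{j^\ast}(0)=0$ and $A_{j^\ast}(t)>0$ for all $t>0$, where $\tilde X^{j^\ast,j^\ast}$ is spectrally positive and each $\tilde X^{i,j^\ast}$ ($i\neq j^\ast$) is a subordinator, all starting at $0$.

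\textbf{Contradiction via the one-dimensional result.} I induct on $|J|$. In the base case $|J|=1$ the sum is empty and the equation is precisely a one-dimensional Lamperti problem driven by the spectrally positive Lévy process $\tilde X^{j^\ast,j^\ast}$ starting at $0$; Proposition 2 of \cite{MR3098685} forces $A_{j^\ast}\equiv 0$ on a right neighbourhood of $0$, contradicting $A_{j^\ast}(t)>0$. For the inductive step $|J|>1$, I classify each $i\in J\setminus\{j^\ast\}$: either $c^i$ is flat on some $[T,T+\eta'']$, in which case $\tilde X^{i,j^\ast}(A_i(t))$ vanishes initially and that term drops out of the equation (effectively reducing the dimension of the sub-system); or $c^i$ strictly increases at $T$, in which case the sub-system for the smaller index set consisting of those indices with strict increase witnesses spontaneous generation with a flatness condition inherited from the original event, and the inductive hypothesis applies after one further strong Markov reduction at an interior stopping time.

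The main obstacle is organizing the inductive step rigorously: successive strong Markov applications must preserve the Lévy character of the remaining drivers, and the chain of reductions must terminate in finitely many steps at a legitimate application of the one-dimensional result. The measurability of the Lamperti transform (Remark \ref{measurabilityForUnidimensionalExistenceLemmaRemark}) and of the multidimensional time-change solutions (Remark \ref{measurabilityForMultidimensionalExistenceLemmaRemark}), together with the multiparameter stopping-time structure of Lemma \ref{measurabilityPropertiesLemma}, are the essential tools for making this rigorous; the alternative, working directly at the level of integral inequalities $M(\delta)\leq\delta\,\Theta(M(\delta))$ with $\Theta$ built from $\tilde X^{i,j^\ast}$, appears too weak because Lévy processes can grow faster than linearly near $0$.
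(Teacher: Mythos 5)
Your overall architecture --- locate the first time of spontaneous generation, apply the multiparameter strong Markov property there (via Lemma \ref{measurabilityPropertiesLemma}), and thereby reduce to ruling out spontaneous generation at time $0$ --- is exactly the paper's strategy, and that part is sound modulo the discretization details the paper supplies for the identity in law. The gap is in your time-$0$ argument. Your induction on $|J|$ does not terminate in the critical case where \emph{every} $c^i$, $i\in J$, increases strictly at $T$: then the ``smaller index set consisting of those indices with strict increase'' is $J$ itself and no reduction occurs. After the strong Markov shift this is precisely the case of a multitype CB process started from $0$ with no immigration, all of whose coordinates try to grow simultaneously, and it is the heart of the lemma. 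Relatedly, your base case overreaches: Proposition 2 of \cite{MR3098685} is a uniqueness statement \emph{within} the class of solutions that lack spontaneous generation (equivalently, that absorb at zero); it does not by itself exclude solutions \emph{with} spontaneous generation --- indeed such solutions exist for deterministic inputs like the Brownian excursion path, as the paper recalls. Excluding them requires probabilistic input about the local behaviour of the L\'evy paths at $0$.

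That input is exactly what you dismiss as ``too weak.'' The paper's proof of $\tau>0$ runs through the integral-inequality route: the off-diagonal drivers $X^{i,j}$ are subordinators, so $X^{i,j}_h/h$ converges to the drift and $X^{i,j}\leq M\,\mathrm{id}$ near $0$ (they do \emph{not} grow faster than linearly); for diagonal coordinates of finite variation the same linear bound plus Gronwall forces $Z^j=0$ until some other coordinate grows; for diagonal coordinates of infinite variation one uses $\liminf_{h\to 0}X^{j,j}_h/h=-\infty$ to pick times $t_n$ with $X^{j,j}_{t_n}<-MAt_n$, dominates the system by one with off-diagonal drivers $\eps_n\vee(M\,\mathrm{id})$, and reduces to a genuinely one-dimensional Lamperti problem $X^{j,j}\circ\tilde C^j+(m-1)\eps_n$ whose total population is $\inf\{t:(m-1)\eps_n+X^{j,j}_t=0\}\leq t_n$, forcing $\tilde C^j\leq\eps_n/M$ and hence $C=0$. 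Without some version of these estimates (or the cited one-dimensional \emph{no-spontaneous-generation} result, which itself rests on them), your proof cannot close the case $J'=J$, so the proposal as written is incomplete.
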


\begin{proof}
Let $Z$ (equivalently $C$) be a solution to \eqref{affineProcessRepresentationEquation}. 
Let $\tau$ be the first instant such that $Z$ admits spontaneous generation. 
We argue that $\tau=\infty$ by contradiction. 
Indeed, we first show that $\tau>0$ almost surely and then we apply arguments related to the strong Markov property to deduce that $\tau<\tau$ on the set $\tau<\infty$.

Let us now show that $\tau>0$. 
Note that $\tau>0$ means that if $J=\set{j\leq m: z_j=0}$ then the assumption that $Y^j=0$ on a right neighbourhood of $0$ for every $j\in J$ and that $X^{i,j}=0$ on a right neighbourhood of zero for every $j\in J$ and $i\not\in J$ implies that $Z^j=0$ for every $j\in J$ on a right neighbourhood of zero. 
Hence, the problem is reduced to proving that if $z_j=0$  and $Y^j=0$ for every $j$ then $Z=0$. 
(This corresponds to analyzing the case of multitype CB processes without immigration. )
Let $T>0$ be such that $C^j_T<\infty$ for every $j$. 
Since $X^{i,j}$ is a subordinator for any $i\neq j$ then $\lim_{h\to 0}X^{i,j}_h/h$ exists and equals the drift coefficient of $X$ (cf. \cite[Prop 8, Ch III]{MR1406564}). 
Hence, there exists $M>0$ such that $X^{i,j}<M\id$ on $[0,C^j_T]$ for all $i\neq j$. 
If $j$ is any coordinate such that $X^{j,j}$ is a finite variation L\'evy process (that is, the difference of two subordinators), the same argument implies that (for a possibly different $M$) $X^{j,j}< M \id$ on $[0,C^j_T]$. 
For this coordinate we see that
\begin{linenomath}
\begin{esn}
0\leq Z^j_t\leq M C^j_t+M\sum_{i\neq j} C^{i}_t.
\end{esn}\end{linenomath}From Gronwall's inequality, we see that $Z$ equals zero until there exists $i\neq j$ such that $C^i$ grows. 
Hence, coordinates $j$ such that $X^{j,j}$ have finite variation cannot be responsible for spontaneous generation. 
To analyze coordinates with infinite variation, recall from \cite{MR0242261} that if $X^{j,j}$ has infinite variation then $\liminf_{h\to 0}X^{j,j}_h/h=-\infty$. 
We consider $m\geq 2$ since the case $m=1$ has been handled in \cite{MR3098685}. 
In particular, for  $A>m-1$, we can choose a sequence $(t_n)$ decreasing to zero and such that $X_{t_n}^{j,j}<-M At_n$. 
We then choose $\eps_n$ in the interval $(M t_n,M A t_n/(m-1))$. 
Now, define $\tilde X^{i,j,n}= \eps_n\vee \paren{ M\id}$ for $i\neq j$ and consider a solution $\tilde Z$ to
\begin{linenomath}\begin{esn}
\tilde Z^j_t = X^{j,j}\circ \tilde C^j_t+\sum_{i\neq j} \tilde X^{i,j,n}\circ \tilde C^i_t.
\end{esn}\end{linenomath}%
A modification of the proof of Lemma \ref{firstMonotonicityLemma} shows that $C^j\leq \tilde C^j$ on $[0,T]$. 
However, note that while every $\tilde C^j$ is below $\eps_n/M$, $\tilde C^j$ behaves as the solution to
\begin{linenomath}\begin{esn}
\tilde Z^j_t = X^{j,j}\circ \tilde C^j_t+(m-1)\eps_n.
\end{esn}\end{linenomath}
It follows that if $\tilde Z^j$ reaches zero before any $\tilde C^i$ exceeds $\eps_n/M$ then $\tilde Z^j$ remains at zero afterwards (since this happens for the one-dimensional problem defining $\tilde Z^j$).
However, recall from Remark \ref{measurabilityForUnidimensionalExistenceLemmaRemark}, that in the one-dimensional case the total population ($C^j_\infty$) equals the time the reproduction function reaches zero ($\inf\{t\geq 0: (m-1)\eps_n+ X^{j,j}_t=0\}$). 
Since $\paren{m-1}\eps_n+X^{j,j}_{t_n}<\paren{m-1}\eps_n-MAt_n
<0
$ and $t_n\leq \eps_n/M$, it follows that $\tilde Z^j$ reaches zero before $\tilde C^j$ reaches $\eps_n/M$. 
Hence, $\tilde C^j\leq \eps_n/M$ and so $C=0$ on $[0,T]$. 
We have hence shown  that $\tau>0$. 

We now the following identity in law:
\begin{linenomath}
\begin{equation}
\label{identityInLawForShiftedProcessesAtFirstSpontaneousGenerationEquation}
\left.\paren{\imf{X^{1}}{C^1_\tau+\cdot}-\imf{X^{1}}{C^1_\tau}, \ldots, \imf{X^{m}}{C^m_\tau+\cdot}-\imf{X^{m}}{C^m_\tau}, Y_{\tau+t}-Y_\tau}
\right| \tau<\infty
\stackrel{d}{=}
\paren{X^1,\ldots, X^m, Y}.
\end{equation}%
\end{linenomath}
The identity in law \eqref{identityInLawForShiftedProcessesAtFirstSpontaneousGenerationEquation} implies a contradiction since if $\tilde Z$ satisfies \eqref{odeSystem} but driven by the left-hand side of \eqref{identityInLawForShiftedProcessesAtFirstSpontaneousGenerationEquation} with initial value $Z_\tau$ then $\tilde Z$ should, by definition of $\tau$, have spontaneous generation at time $0$, which is impossible.

To prove the identity in law of \eqref{identityInLawForShiftedProcessesAtFirstSpontaneousGenerationEquation}, 
we first prove that for any $0\leq t_1,\ldots ,t_m$ we have
\begin{linenomath}
\begin{equation}
\label{measurabilityDetailForFirstSpontaneousGeneration}
A=\set{C^1_{\tau}\leq t_1,\ldots, C^m_{\tau}\leq t_m, \tau> t}\in \F^\circ_{t_1,\ldots, t_m,t}. 
\end{equation}\end{linenomath}Indeed, the above will follow from proving that
\begin{linenomath}\begin{esn}
\set{\tau<t}\in 
\sag{C^j_s,Z^j_s,Y^j_s, X^{i,j}\circ C^i_s:0\leq s\leq t, 1\leq i,j\leq m}. 
\end{esn}\end{linenomath}thanks to Lemma \ref{measurabilityPropertiesLemma}.  However, note that $\tau<t$ is and only if there exists $J\subset[m]=\set{1,\ldots,m}$ and $j\in J$ such that $Z^j$ presents spontaneous generation over a common interval of constancy of $Y^j$ and $X^{i,j}\circ C^i$ of length greater than $\eps$. This can be discretized as follows:
\begin{linenomath}\begin{align*}
\set{\tau<t}
&=\bigcup_{J\subset [m]}\bigcup_{j\in J}\bigcup_{\substack{\eps>0\\ \eps\in \ra}}\bigcup_{\substack{q\in [0,t]\\q\in\ra}} \bigcap_{\substack{\delta>0\\ \delta\in\ra}}\bigcup_{\substack{p<q-\eps\\p\in \ra}}
\bigcap_{i\not\in J}\bigcap_{j'\in J}
\\&\set{Z^j_q>0,  0<\underline Z^j_{[p,q]}, Z^{j'}_{p}<\delta}\cap\set{X^{i,j}\circ C^i, Y^j\text{ are constant on } [p,q]},
\end{align*}\end{linenomath}where $\underline Z^j_{[p,q]}=\inf\set{ Z^j_s:s\in [p,q]}$. 

We now consider the random times $\tau_n$ and $C^i_n$ where
\begin{linenomath}
\begin{esn}
\tau_n=(k+1)/2^n\text{ if }k/2^n\leq \tau<(k+1)/2^n
\end{esn}\end{linenomath}and
\begin{linenomath}
\begin{esn}
C^i_n=(k_i+1)/2^n\text{ if }k_i/2^n\leq C^i_{\tau_n}<(k_i+1)/2^n. 
\end{esn}\end{linenomath}Then, thanks to \eqref{measurabilityDetailForFirstSpontaneousGeneration}
\begin{linenomath}
\begin{align*}
&\set{\tau=(k+1)/2^n, C^i_n=(k_i+1)/2^n}
\\&=\set{k/2^n\leq \tau<(k+1)/2^n, k_i/2^n\leq
C^i_{(k+1)/2^n}<(k_i+1)/2^n }
\\&\in 
\F^\circ_{(k_1+1)/2^n,\ldots, (k_m+1)/2^n,(k+1)/2^n}
\end{align*}\end{linenomath}Also, note that $\tau_n$ and $C^i_n$ decrease to $\tau$
and $C^i_\tau$ respectively. 

Consider now the processes $\tilde X^i$ and $\tilde Y$ where $\tilde
X^i_t=X^i_{C^i_n+t}-X^i_{C^i_n}$ and $\tilde
Y_{t}=Y_{\tau_n+t}-Y_{\tau_n}$. We assert now that the joint law of
$\tilde X^1,\ldots, \tilde X^m$ and $\tilde Y$ equals the law of
$X^1,\ldots, X^m$ and $Y$. To prove this, we focus on the
one-dimensional distributions since the computation of the finite-dimensional
distributions is just notationally more cumbersome. 
\begin{linenomath}
\begin{align*}
&\proba{\tilde X^i_{t}\leq x_i,1\leq i\leq m , \tilde
Y_t\leq x,\tau_n<\infty}
\\&=\sum_{k_1,\ldots, k_m,k}
\proba{ \tau_n=(k+1)/2^n, C^i_n=(k_i+1)/2^n, \tilde X^i_{t}\leq x_i, 1\leq i\leq m, \tilde
Y^t\leq x}
\\&=\sum_{k_1,\ldots, k_m,k}\p \left( k/2^n\leq \tau<(k+1)/2^n, k_i/2^n\leq
C^i_{(k+1)/2^n}<(k_i+1)/2^n ,\right.
\\& \hphantom{=\sum_{k_1,\ldots, k_m,k}\p (}\left. X^i_{t+(k_i+1)/2^n}-X^i_{(k_i+1)/2^n}\leq x ,1\leq i\leq m, 
Y_{t+(k+1)/2^n}-Y_{(k+1)/2^n}\leq x \right)
\\&=\sum_{k_1,\ldots, k_m,k}\proba{ k/2^n\leq \tau<(k+1)/2^n, k_i/2^n\leq
C^i_{(k+1)/2^n}<(k_i+1)/2^n,1\leq i\leq m}
\\& 
\hphantom{=\sum_{k_1,\ldots, k_m,k}}\times\proba{X^i_{t}\leq x_i,1\leq i\leq m, 
Y_{t}\leq x}
\\&= \proba{\tau_n<\infty}\proba{X^i_{t}\leq x_i,1\leq i\leq m, 
Y_{t}\leq x}.
\end{align*}\end{linenomath}

As $n\to\infty$, the process $\tilde X^i$  converges to $X^i_{C^i_\tau+\cdot}-X^i_{C^i_\tau}$. 
We conclude \eqref{identityInLawForShiftedProcessesAtFirstSpontaneousGenerationEquation}.
\end{proof}

\section{Construction of affine processes on $\re_+^m$}
\label{nonNegativeAffineProcessesSection}
In this section we aim at completing the proof of Theorem \ref{affineProcessRepresentationTheorem} in the case where the process takes values in $\re_+^m$; that is, when $n=0$. 

Let $X^1,\ldots, X^m, Y$ be L\'evy processes satisfying the conditions of Theorem \ref{affineProcessRepresentationTheorem} when $n=0$. 
Let $z\in\re_+^m$ have non-negative coordinates, let $Z$ be the unique solution to \eqref{affineProcessRepresentationEquation} (when there is uniqueness and let $Z$ be zero otherwise) and let $C$ be the (coordinatewise) primitive of $Z$ which starts at zero. 
(The solution exists thanks to Lemma \ref{generalExistenceLemma} and is unique by Lemmas \ref{monotonicityLemma} and \ref{noSpontaneousGenerationForLevyProcessesLemma}). 

For $t,t_1,\ldots,t_m\geq 0$, recall the definition of the multiparameter filtration $\F^\circ_{t_1,\ldots, t_m,t}$ given in \eqref{multiparameterFiltrationDefinition}. 
Let $\mc{N}$ be the null sets of $\p$ and define
\begin{linenomath}
\begin{equation}
\label{completedMultiparameterFiltrationDefinition}
\F_{t_1,\ldots, t_m,t}=\F^\circ_{t_1,\ldots,t_m,t}\vee \mc{N}.
\end{equation}
\end{linenomath}

Since the processes $X^1,\ldots, X^m,Y$ are independent and the completed filtrations of any one of the L\'evy processes are right-continuous (cf. \cite[Prop. 4, Ch. 1]{MR1406564}) then one can use \cite[Ex. 2.5]{MR2016344} to see that
\begin{linenomath}
\begin{esn}
\F_{s_1,\ldots, s_m,s}=\bigcap_{t_i>s_i,t>s}\F_{t_1, \ldots, t_m,t}. 
\end{esn}
\end{linenomath}

\begin{lemma}[Measurability details and the Markov property]
\label{measurabilityDetailsLemma}
\mbox{}
\begin{enumerate}
\item For any $t\geq 0$, $C_t$ is a multidimensional stopping time:
\begin{linenomath}
\begin{esn}
\set{C^1_t\leq t_1,\ldots, C^m_t\leq t_m}\in \F_{t_1,\ldots, t_m,t}. 
\end{esn}
\item The class\begin{equation}
\label{timeChangedFiltrationDefinition}
\G_t=\set{A\in\F: A\cap\set{C^1_{t}\leq t_1,\ldots, C^m_t\leq t_m}\in \F_{t_1,\ldots, t_m,t}}
\end{equation}is a \sa\ and the collection $(\G_t,t\geq 0)$ is a filtration satisfying the usual hypotheses. 
\item The following strong Markov property holds: for any $t\geq 0$, conditionally on $C^i_t<\infty$ for $1\leq i\leq m$
\begin{esn}
\paren{X^1_{C^1_t+\cdot}-X^1_{C^1_t},\ldots, X^{m}_{C^m_t+\cdot}-X^m_{C^m_t},Y_{t+\cdot}-Y_t}\stackrel{d}{=} \paren{X^1,\ldots, X^m,Y}
\end{esn}and the process on the left-hand side is independent of $\G_t$.
\item $Z$ is a a $\paren{\G_t,t\geq 0}$-Markov process. 
\end{linenomath}
\end{enumerate}
\end{lemma}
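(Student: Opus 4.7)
My plan is to establish the four assertions in sequence, since each feeds into the next. For (1), I would invoke Lemma~\ref{measurabilityPropertiesLemma} directly: each event $\set{C^j_t\leq t_j}$ sits in $\sag{C^j_s: 0\leq s\leq t}$, and intersecting with $\set{C^i_t\leq t_i, 1\leq i\leq m}$ (which in fact absorbs it) lands me inside $\F^\circ_{t_1,\ldots,t_m,t}\subset \F_{t_1,\ldots,t_m,t}$, as desired.

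For (2), I would first check that $\G_t$ is a \sa\ by a routine verification: the trace map $A\mapsto A\cap\set{C^i_t\leq t_i,1\leq i\leq m}$ commutes with countable unions, and complementation within this trace is closed thanks to (1). For the filtration inclusion $\G_s\subset\G_t$ when $s\leq t$, I would use monotonicity of $C^i$ (each $C^i$ is non-decreasing since $Z^i\geq 0$): then $\set{C^i_t\leq t_i}\subseteq\set{C^i_s\leq t_i}$, which allows me to rewrite $A\cap\set{C^i_t\leq t_i}=(A\cap\set{C^i_s\leq t_i})\cap \set{C^i_t\leq t_i}$ for $A\in\G_s$ and conclude via (1). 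The usual hypotheses follow from the right-continuity of $(\F_{t_1,\ldots,t_m,t})$ established just before the lemma, together with continuity of $C^i$, and from the fact that completion has been built into $\F_{t_1,\ldots,t_m,t}$.

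Part (3) is the crux. My approach would mirror the discretization already used in the proof of Lemma~\ref{noSpontaneousGenerationForLevyProcessesLemma}: approximate each $C^i_t$ from above by the dyadic $C^i_{t,n}$ obtained by rounding up to the next multiple of $2^{-n}$, and similarly approximate $t$ by $t_n$. On each dyadic cell I can apply the ordinary strong Markov property of each L\'evy process $X^i$ at a deterministic (hence stopping) time together with that of $Y$, using independence of $X^1,\ldots,X^m,Y$ to factor the joint law of the increments. The key measurability fact $\set{t_n=(k+1)/2^n,C^i_{t,n}=(k_i+1)/2^n,1\leq i\leq m}\in\F^\circ_{(k_1+1)/2^n,\ldots,(k_m+1)/2^n,(k+1)/2^n}$, which is a consequence of \eqref{measurabilityDetailForFirstSpontaneousGeneration} and (1), ensures independence from the appropriate multiparameter \sa. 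Passing to the limit $n\to\infty$ via stochastic continuity of the L\'evy processes then yields both the identity in law and the independence from $\G_t$.

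Part (4) is then a short deduction. Setting $\tilde X^i_s=X^i_{C^i_t+s}-X^i_{C^i_t}$ and $\tilde Y_s=Y_{t+s}-Y_t$, the shifted process $\paren{Z_{t+s},s\geq 0}$ solves \eqref{affineProcessRepresentationEquation} driven by $\tilde X^i,\tilde Y$ with initial value $Z_t$ (its primitive is $\tilde C^i_s = C^i_{t+s}-C^i_t$). By (3), the driving processes have the same joint law as $X^1,\ldots,X^m,Y$ and are independent of $\G_t$, while $Z_t$ is $\G_t$\nbd measurable. Uniqueness (Lemmas~\ref{monotonicityLemma} and~\ref{noSpontaneousGenerationForLevyProcessesLemma}) makes $Z_{t+\cdot}$ a deterministic measurable functional of $Z_t$ together with the driving processes, so conditional on $\G_t$ the law of $Z_{t+\cdot}$ is $\p_{Z_t}$, giving the Markov property. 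The main obstacle I anticipate is carrying out part (3) with care: one must verify that the dyadic approximations really land in the correct indices of the multiparameter filtration and that the limit passage respects both the conditional law and the independence from $\G_t$. Once this multiparameter strong Markov statement is secured, parts (1), (2) and (4) are comparatively routine.
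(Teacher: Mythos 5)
Your proposal is correct and follows essentially the same route as the paper: part (1) via Lemma~\ref{measurabilityPropertiesLemma} (the paper additionally notes that the unique solution agrees a.s.\ with the maximal one, which the completed filtration absorbs), part (2) by the same trace/right-continuity argument, part (3) by the dyadic discretization already used in Lemma~\ref{noSpontaneousGenerationForLevyProcessesLemma}, and part (4) by writing $Z_{t+\cdot}$ as a measurable functional of $Z_t$ and the shifted, $\G_t$-independent drivers.
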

\begin{proof}
\begin{enumerate}
\item The fact that $C$ is a multidimensional stopping time follows from Lemma \ref{measurabilityPropertiesLemma} once we note that $C$ is almost surely equal to the maximal solution to \eqref{affineProcessRepresentationEquation} constructed in Lemma \ref{generalExistenceLemma} thanks to the fact that solutions have almost surely no spontaneous generation (Lemma \ref{noSpontaneousGenerationForLevyProcessesLemma}) and the uniqueness of solutions without spontaneous generation of Lemma \ref{monotonicityLemma}.
\item It is easy to prove that $\G_t$ is a \sa. 
$\G_t$ also contains the null sets $\mc{N}$ since every $\F_{t_1,\ldots, t_m, t}$ contains them by definition. 
Also, since $\F_{t_1,\ldots, t_m,s}\subset\F_{t_1.\ldots,t_m,t}$ if $s\leq t$, then $(\G_t,t\geq 0)$ is a filtration. 
To see that $\G_t$ is right continuous, we only need to prove that $\cap_{t>s}\G_t=\G_s$. 
Let $A\in \cap_{t>s}\G_t$. Then for $t'>t>s$ we have
\begin{linenomath}
\begin{esn}
A\cap\set{C^1_{t}\leq t_1,\ldots, C^m_{t}\leq t_m}\in\F_{t_1,\ldots, t_m,t'}. 
\end{esn}\end{linenomath}Since $C^i_t\to C^i_s$ as $t\downarrow s$, we see that
\begin{linenomath}\begin{esn}
A\cap\set{C^1_{t}\leq t_1,\ldots, C^m_{t}\leq t_m}\uparrow A\cap\set{C^1_{s}\leq t_1,\ldots, C^m_{s}\leq t_m}
\end{esn}\end{linenomath}as $t\downarrow s$ and we conclude that
\begin{linenomath}
\begin{esn}
A\cap\set{C^1_{s}\leq t_1,\ldots, C^m_{s}\leq t_m}\in\F_{t_1,\ldots, t_m,t'}
\end{esn}\end{linenomath}for $s<t'$. Finally, we have already remarked that $\F_{t_1,\ldots, t_m,t'}\downarrow \F_{t_1,,\ldots, t_m,s}$ as $t'\downarrow s$, proving that $A\in\G_s$. 
\item The proof of the Markov type property follows the same pattern as the one in Lemma \ref{noSpontaneousGenerationForLevyProcessesLemma}. In fact, it is basically the same proof as the strong Markov property for L\'evy processes at a stopping time. 
\item First, from Lemma \ref{measurabilityPropertiesLemma} we deduce the existence of a measurable map $F_t$ which applied to functions $(f^j,g^j)$ satisfying \defin{H} returns the value at $t$ of the (maximal) solution $h$ to \eqref{odeSystem} (which is the unique one when inputting L\'evy processes plus an initial value). Note also that $t\mapsto \tilde C^j_t=C^j_{t+s}-C^j_s$ has a \cadlag\ derivative $t\mapsto \tilde Z^j_t=Z^j_{t+s}$ and satisfies
\begin{linenomath}
\begin{esn}
\tilde Z^j_t= Z^j_{s}+\sum_i \tilde X^{i,j}_{\tilde C^j_s}+\tilde Y^j_t
\end{esn}\end{linenomath}where
\begin{linenomath}\begin{esn}
\tilde X^{i,j}_t=X^{i,j}_{t+C^i_s}-X^{i,j}_{C^i_s}\quad\text{and} \quad \tilde Y_t=Y_{t+s}-Y_s. 
\end{esn}\end{linenomath}Hence $Z_{t+s}=\imf{F_t}{\tilde X^1,\ldots, \tilde X^m, Z_s+\tilde Y}$. Since $\tilde X^1,\ldots, \tilde X^m,\tilde Y$ are independent of $Z^j_s, C^j_s$ (which are $\G_s$-measurable), we see that the conditional law of $Z_{t+s}$ given $\G_s$ equals the law of $Z$ started at $\tilde z$ on the set $Z_s=\tilde z$. \qedhere
\end{enumerate}
\end{proof}
We now consider a martingale which is fundamental to the proof of Theorem \ref{affineProcessRepresentationTheorem}. 
\begin{lemma}[An exponential martingale]
\label{exponentialMartingaleLemma}
For any $u\in\re_-^m$, the stochastic process $M$ given by
\begin{linenomath}
\begin{esn}
M_t=e^{u\cdot Z_t}-\int_0^t e^{u\cdot Z_s}\bra{\imf{F}{u}+\imf{R}{u}\cdot Z_s}\, ds
\end{esn}\end{linenomath}%
is a martingale. 
\end{lemma}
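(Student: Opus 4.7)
The plan is to apply It\^o's formula to the product decomposition
\begin{linenomath}\begin{esn}
e^{u\cdot Z_t}=e^{u\cdot z}\prod_{i=1}^m A^i_t\cdot B_t,\qquad A^i_t:=e^{u\cdot X^i\circ C^i_t},\quad B_t:=e^{u\cdot Y_t},
\end{esn}\end{linenomath}
after identifying each factor as a $(\G_t)$-semimartingale with an explicit drift. The starting point is the classical exponential martingale of a L\'evy process: the defining identities \eqref{characteristicExponentsDefinition} yield that the processes
\begin{linenomath}\begin{esn}
N^i_t=e^{u\cdot X^i_t}-R_i(u)\int_0^t e^{u\cdot X^i_s}\,ds\quad(1\leq i\leq m),\qquad N^Y_t=e^{u\cdot Y_t}-F(u)\int_0^t e^{u\cdot Y_s}\,ds
\end{esn}\end{linenomath}
are martingales in the natural filtrations of $X^i$ and of $Y$, respectively, and equal $1$ at $t=0$.

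The first step is to appeal to the multiparameter time-change framework (Chapter 6 of \cite{MR838085}): by Lemma \ref{measurabilityDetailsLemma}(1), each $C^i_t$ is a stopping time for $\F_{t_1,\ldots,t_m,t}$, so $N^i\circ C^i$ is a $(\G_t)$-martingale by the multiparameter optional stopping theorem. Using that $C^i$ is absolutely continuous with right-hand derivative $Z^i$, the change of variable $s=C^i_r$, $ds=Z^i_r\,dr$ gives
\begin{linenomath}\begin{esn}
A^i_t=1+R_i(u)\int_0^t A^i_r Z^i_r\,dr+\tilde N^i_t,
\end{esn}\end{linenomath}
where $\tilde N^i:=N^i\circ C^i$ is a $(\G_t)$-martingale, and analogously $B_t=1+F(u)\int_0^t B_s\,ds+N^Y_t$.

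Next I would apply the It\^o product rule to $\prod_i A^i\cdot B$. The crucial observation is that the local martingales $\tilde N^1,\ldots,\tilde N^m,N^Y$ are pairwise strongly orthogonal in $(\G_t)$: the jumps of $A^i$ occur at those $t$ for which $C^i_t$ belongs to the (countable) jump set of $X^i$, and since $X^1,\ldots,X^m,Y$ are mutually independent L\'evy processes they share no common jumps almost surely and have independent Gaussian parts. Consequently every pairwise quadratic covariation vanishes and the product formula collapses to
\begin{linenomath}\begin{esn}
d\!\paren{\prod_i A^i_t\cdot B_t}=\prod_i A^i_t\cdot B_t\,\bra{F(u)+\sum_{i=1}^m R_i(u)Z^i_t}dt+d(\text{local martingale}).
\end{esn}\end{linenomath}
Multiplying by $e^{u\cdot z}$ and integrating from $0$ to $t$ exhibits $M$ as $e^{u\cdot z}$ plus a $(\G_t)$-local martingale. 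Since $u\in\re_-^m$ and $Z^i_s\geq 0$ we have $0< e^{u\cdot Z_s}\leq 1$, and $|R(u)\cdot Z_s|\,e^{u\cdot Z_s}$ is uniformly bounded because $x\mapsto xe^{-\eta x}$ is bounded on $\re_+$ for every $\eta>0$; hence $M$ is bounded on compact time intervals and is therefore a genuine martingale.

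The main technical obstacle is the second step, namely rigorously transferring the one-parameter exponential martingales $N^i$ into $(\G_t)$-martingales after the time change $C^i$. This requires that $(C^1,\ldots,C^m,\id)$ be a legitimate multiparameter time change in the sense of \cite{MR838085}, which is precisely what Lemma \ref{measurabilityDetailsLemma} guarantees, together with the strong Markov-type property of Lemma \ref{measurabilityDetailsLemma}(3) used to verify the orthogonality of the $\tilde N^i$'s despite the intertwined nature of the time changes.
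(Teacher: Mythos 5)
Your overall architecture is exactly the paper's: the same product decomposition $e^{u\cdot Z_t}=e^{u\cdot Y_t}\prod_i e^{u\cdot X^i\circ C^i_t}$, the same one-parameter exponential martingales, the multiparameter time change of \cite[Ch.~6]{MR838085}, orthogonality of the time-changed local martingales, integration by parts, and boundedness of $x\mapsto e^{u\cdot x}$ on $\re_+^m$ to upgrade from local martingale to martingale.

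There is, however, one step whose justification as written does not hold up: the orthogonality of $\tilde N^1,\ldots,\tilde N^m, N^Y$. You argue that the pairwise covariations vanish because the independent L\'evy processes $X^1,\ldots,X^m,Y$ share no common jumps. But that statement concerns the original time axes, whereas what is needed is that $X^{i}\circ C^i$ and $X^{j}\circ C^j$ do not jump at the same time $t$ on the \emph{new} time axis, i.e.\ that there is no $t$ with $C^i_t$ in the jump set of $X^i$ and simultaneously $C^j_t$ in the jump set of $X^j$. Since $C^i$ and $C^j$ are functionals of \emph{all} the driving processes, this does not follow from independence alone; indeed, the absence of common jumps of the time-changed processes is precisely Corollary \ref{noCommonJumpsCorollary} of the paper, which is \emph{deduced from} the orthogonality established in this lemma, so arguing in the direction you propose is circular. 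The non-circular route — the one the paper takes — is to invoke the result of \cite{MR838085} (Problem 19, Ch.~2, in the multiparameter setting of Ch.~6) stating that the time change of independent martingales by a multiparameter stopping time $(C^1_t,\ldots,C^m_t,t)$ produces \emph{orthogonal} local martingales; the stopping-time property is supplied by Lemma \ref{measurabilityDetailsLemma}(1). You cite this framework for the optional-stopping step, so the fix is simply to lean on it for the orthogonality as well, rather than on a no-common-jumps argument. With that substitution the rest of your proof (change of variables, product rule with vanishing covariations, and the boundedness argument) goes through as in the paper.
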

\begin{proof}
Since $M$ has bounded paths on $[0,t]$ for any $t\geq 0$, since $u\in\re^m_-$,it suffices to prove that $M$ is a local martingale. 
Consider the exponential martingale associated to any $X^i$ and to $Y$: 
since\begin{linenomath}\begin{esn}
\esp{e^{u\cdot X^i_1}}=e^{\imf{R_i}{u}}
\quad\text{and}\quad
\esp{e^{u\cdot Y_1}}=e^{\imf{F}{u}},
\end{esn}\end{linenomath}and since $x\mapsto e^{u\cdot x}$ is bounded on $E=\re_+^m\times\re^n$ if $u\in \re_-^m$, the stochastic processes
\begin{linenomath}\begin{esn}
\tilde M^i_t=e^{u\cdot X^i_t}-\int_0^t \imf{R_i}{u} e^{u \cdot X^i_s}\, ds
\quad\text{and}\quad 
N_t=e^{u\cdot Y_t}-\int_0^t \imf{F}{u} e^{u\cdot Y_s}\, ds
\end{esn}\end{linenomath}are martingales. 
(Note that the above assertion is true even if $C^i_t=\infty$ for some $t$ and $i$.) 
They are independent since  $X^1,\ldots, X^m$ and $Y$ are independent. 

The random variables $\paren{C_t,t\geq 0}$ are $\F_{s_1,\ldots, s_m,t}$-stopping times and $s\leq t$ implies $C_s\leq C_t$. 
Hence, they constitute a multiparameter time change in the sense of Chapter 6 of \cite{MR838085}. 
Consider then the time changed processes\begin{linenomath}\begin{esn}
M^i_t=\tilde M^i\circ C^i_t=e^{u\cdot X^i\circ C^i_t}-\int_0^t  e^{u\cdot X^i\circ C^i_s} \imf{R_i}{u} Z^i_s\, ds. 
\end{esn}\end{linenomath}Problem 19 in \cite[Ch. 2]{MR838085} tells us that the (multiparameter) time change of the $m+1$ independent martingales $\tilde M^1,\ldots, \tilde M^m, N$ gives rise to the $m+1$ orthogonal local martingales $M^1,\ldots, M^n,N$. 
Hence\begin{linenomath}\begin{esn}
\bra{M^i,M^j}=0=
\bra{M^i,N}
\end{esn}\end{linenomath}for all $i,j$ with $i\neq j$. 

Note that\begin{linenomath}\begin{align*}
e^{u\cdot Z_t}
=\prod_{j}e^{u_j\cdot Z^j_t }
=\prod_j e^{u_j \cdot Y^j_t}\prod_i e^{u_j \cdot X^{i,j}\circ C^i_t}
=e^{u\cdot Y_t}\prod_i e^{u\cdot X^i\circ C^i_t}.
\end{align*}\end{linenomath}Since $e^{u \cdot X^i\circ C^i}$ and $e^{u\cdot Y}$ are semimartingales whose local martingale parts are orthogonal, and whose finite variation parts are continuous, we can use integration by parts, the fact that covariation is bilinear and that the covariation with a continuous finite-variation process is zero (cf. Theorem 26.6.viii in \cite{MR1876169}) to obtain
\begin{linenomath}\begin{align*}
e^{u\cdot Z_t}
&=\text{Loc. Mart.}+
\sum_{j}\int_0^t e^{u\cdot Y_s}\prod_{i\neq j} e^{u\cdot X^i\circ C^j}e^{u\cdot Z^j_s}\imf{R_j}{u} Z^j_s\, ds
+\int_0^t \prod_i e^{u\cdot X^i\circ C^i_t}e^{u\cdot Y_s}\imf{F}{u}\, du
\\&=\text{Loc. Mart.}+\int_0^t e^{u\cdot Z_s}\bra{\imf{R}{u}\cdot Z_s+\imf{F}{u}}\, ds. 
\end{align*}\end{linenomath}We conclude that $M$ is a local martingale. 
\end{proof}
We deduce the following result, which is important in our proof of stability of the multiparameter time change equation. 
Indeed, it is important since addition is not continuous on the space of \cadlag\ functions (with the Skorohod $J_1$ topology), but it is continuous when the summands do not have common discontinuities, as is discussed for example in Theorem 4.1 of \cite{MR561155}. 
\begin{corollary}
\label{noCommonJumpsCorollary}
Almost surely, for each $j\in\set{1,\ldots, m}$ the processes   $X^{i,j}\circ C^i, 1\leq i\leq m$ and $Y^j$ do not jump at the same time. 
\end{corollary}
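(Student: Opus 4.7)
The plan is to combine the orthogonality of the local martingales $M^1,\ldots,M^m,N$ obtained in the proof of Lemma \ref{exponentialMartingaleLemma} with the classical semimartingale identity $\bra{U,V}_t=\langle U^c,V^c\rangle_t+\sum_{s\leq t}\Delta U_s\Delta V_s$, and then to unpack the resulting vanishing of products of jumps into the claimed absence of common jumps.

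First I would fix $u\in\re_-^m$ with every coordinate strictly negative (for instance $u=-\si$) and recall from the proof of Lemma \ref{exponentialMartingaleLemma} that the local martingales $M^i_t=e^{u\cdot X^i\circ C^i_t}-\int_0^t e^{u\cdot X^i\circ C^i_s}\imf{R_i}{u}Z^i_s\,ds$ and $N_t=e^{u\cdot Y_t}-\int_0^t e^{u\cdot Y_s}\imf{F}{u}\,ds$ are pairwise orthogonal, that is, $\bra{M^i,M^{i'}}=0$ for $i\neq i'$ and $\bra{M^i,N}=0$. Since the compensating integrals in the definitions of $M^i$ and $N$ are absolutely continuous in $t$, the jumps of $M^i$ coincide with those of $e^{u\cdot X^i\circ C^i}$ and the jumps of $N$ with those of $e^{u\cdot Y}$.

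Next, the covariation decomposition applied with $U=M^i$ and $V=M^{i'}$ writes $\bra{M^i,M^{i'}}$ as the sum of the continuous process $\langle (M^i)^c,(M^{i'})^c\rangle$ and the right-continuous pure-jump process $t\mapsto\sum_{s\leq t}\Delta M^i_s\Delta M^{i'}_s$, whose jump at $t$ equals $\Delta M^i_t\Delta M^{i'}_t$. The vanishing of $\bra{M^i,M^{i'}}$ then forces the sum-of-jumps process to be continuous; since it starts at zero and is constant away from simultaneous jumps of $M^i$ and $M^{i'}$, it must vanish identically. Hence $\Delta M^i_s\Delta M^{i'}_s=0$ for every $s$, almost surely, and the same reasoning with $V=N$ yields $\Delta M^i_s\Delta N_s=0$.

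To conclude, one notes that under the hypotheses of Theorem \ref{affineProcessRepresentationTheorem} in the case $n=0$ every coordinate of every jump of $X^i$ and of $Y$ is non-negative (subordinators for off-diagonal entries of $X^i$ and for $Y$, spectral positivity for $X^{i,i}$). Since $u$ has all coordinates strictly negative, $e^{u\cdot\Delta(X^i\circ C^i)_s}-1$ vanishes iff $\Delta(X^i\circ C^i)_s=0$, and similarly for $Y$. The vanishing of the products of jumps therefore translates into the statement that at every $s$ at most one of the vector processes $X^1\circ C^1,\ldots,X^m\circ C^m,Y$ has a non-zero jump, and a fortiori for each fixed $j\in\set{1,\ldots,m}$ the scalar processes $X^{1,j}\circ C^1,\ldots,X^{m,j}\circ C^m,Y^j$ never jump simultaneously. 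The main subtlety is the passage from $\bra{U,V}=0$ to the pointwise vanishing $\Delta U_s\Delta V_s=0$, but this is an immediate consequence of the right-continuity of the sum-of-jumps process and its constancy off the set of common jumps.
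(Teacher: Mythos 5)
Your proof is correct and follows essentially the same route as the paper: both exploit the orthogonality of the time-changed exponential local martingales established in the proof of Lemma \ref{exponentialMartingaleLemma}, pass from vanishing covariation to vanishing products of jumps, and then use the sign of the jumps (all nonnegative in the $n=0$ setting) together with the strict negativity of $u$ to conclude. The only cosmetic differences are that the paper takes $u=-e_j$ and argues coordinatewise, invoking the finite variation of $e^{-Y^j}$ to identify the bracket with the sum of jump products, whereas you take $u$ with all coordinates negative and use the general decomposition $\bra{U,V}=\langle U^c,V^c\rangle+\sum\Delta U\Delta V$, obtaining the slightly stronger conclusion that the vector-valued processes have pairwise disjoint jump times.
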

\begin{proof}
As shown in the proof of Lemma \ref{exponentialMartingaleLemma},  the processes $e^{ u\cdot X^i\circ C^i}, 1\leq i\leq m$ and $e^{u\cdot Y}$ are semimartingales for $1\leq i\leq m$ with zero covariation. 
Considering a vector $u$ all of whose coordinates are zero except the $j$-th which equals $-1$, we deduce that the semimartingales $e^{-X^{i,j}\circ C^i}, 1\leq i\leq m$ and $e^{-Y^j}$ have zero covariation. 
Since $e^{-Y^j}$ is of finite variation, we see that
\begin{lesn}
0=[e^{-X^{i,j}\circ C^i}, e^{-Y^j}]_t=\sum_{s\leq t } \Delta e^{-X^{i,j}\circ C^i(s)}\Delta e^{-Y^j_s}.
\end{lesn}Since each summand in the right-most side is negative, we conclude that $X^{i,j}\circ C^i$ and $Y^j$ do not jump at the same time. 
The same argument applies when considering $X^{i,j}\circ C^i$ and $X^{i',j}\circ C^{i'}$ if $i\neq i'$ since at most one is of infinite variation. 
\end{proof}

As already mentioned in Section \ref{preliminariesSection}, there exists a unique function $\imf{\psi}{t,u}$ such that $\imf{\psi}{0,u}=u$ and
\begin{linenomath}\begin{esn}
\frac{\partial \imf{\psi}{t,u}}{\partial t}=\imf{R\circ \psi}{t,u}. 
\end{esn}\end{linenomath}We also consider the function\begin{linenomath}\begin{esn}
\imf{\phi}{t,u}=\int_0^t \imf{F\circ \psi}{s,u}\, ds. 
\end{esn}\end{linenomath}
In order to prove that the process $Z$ which solves \eqref{affineProcessRepresentationEquation} when $n=0$ is a $\cbi$ associated to the pair of characteristic exponents $R$ and $F$, it suffices to see that $Z$ is a Markov process (which is covered by Lemma \ref{measurabilityDetailsLemma}), and to prove the following lemma:
\begin{lemma}
\label{oneDimensionalDistributionsLemma}
For any $z\in\re_+^m$,  and $u\in\re_-^m$\begin{linenomath}\begin{esn}
\imf{\se_z}{e^{ u\cdot Z_t}}= e^{z\cdot \imf{\psi}{t,u}+\imf{\phi}{t,u}}. 
\end{esn}\end{linenomath}
\end{lemma}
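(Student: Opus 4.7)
Fix $t>0$ and $u\in\re_-^m$. The plan is to manufacture a bounded martingale $N$ on $[0,t]$ whose endpoint values are deterministic on one side and $e^{u\cdot Z_t}$ on the other. Introduce the deterministic auxiliary processes $v_s=\imf{\psi}{t-s,u}$ and $w_s=\imf{\phi}{t-s,u}$ for $s\in[0,t]$. The Riccati equations \eqref{riccatiEquations} give $v_s'=-\imf{R}{v_s}$ and $w_s'=-\imf{F}{v_s}$, with boundary values $v_0=\imf{\psi}{t,u}$, $w_0=\imf{\phi}{t,u}$, $v_t=u$ and $w_t=0$. From the global existence and uniqueness theory for the Riccati flow recalled at the end of Section \ref{preliminariesSection}, the flow preserves $\re_-^m$, so $v_s\in\re_-^m$ for every $s\in[0,t]$. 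Consequently the candidate
\begin{linenomath}
\begin{esn}
N_s=e^{v_s\cdot Z_s+w_s},\qquad s\in[0,t],
\end{esn}
\end{linenomath}
is uniformly bounded (since $Z_s\in\re_+^m$, $v_s\in\re_-^m$, and $w$ is continuous on a compact interval).

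The central step is to prove that $N$ is a local martingale, which by boundedness then upgrades to a true martingale. For every fixed $v\in\re_-^m$, Lemma \ref{exponentialMartingaleLemma} provides the semimartingale decomposition
\begin{linenomath}
\begin{esn}
e^{v\cdot Z_s}=e^{v\cdot z}+M^v_s+\int_0^s e^{v\cdot Z_r}\bra{\imf{F}{v}+\imf{R}{v}\cdot Z_r}\,dr
\end{esn}
\end{linenomath}
with $M^v$ a martingale. To turn this into a decomposition of $s\mapsto e^{v_s\cdot Z_s}$, where the parameter varies smoothly, I would pick a partition $0=s_0<s_1<\cdots<s_n=t$ of vanishing mesh, apply the above with $v=v_{s_k}$ on each $[s_k,s_{k+1}]$, and bridge the pieces using a first-order Taylor expansion of $v\mapsto e^{v\cdot Z_{s_{k+1}}}$ around $v=v_{s_k}$. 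Summing the local martingale contributions and passing to the limit (justified by boundedness of $N$, the $C^1$ dependence of $v_s$ on $s$, and dominated convergence) yields
\begin{linenomath}
\begin{esn}
e^{v_s\cdot Z_s}=e^{v_0\cdot z}+L_s+\int_0^s e^{v_r\cdot Z_r}\bra{\imf{F}{v_r}+\imf{R}{v_r}\cdot Z_r+v_r'\cdot Z_r}\,dr,
\end{esn}
\end{linenomath}
for some local martingale $L$. Substituting $v_r'=-\imf{R}{v_r}$ collapses the bracket to $\imf{F}{v_r}$.

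Since $s\mapsto e^{w_s}$ is continuous of finite variation with $d e^{w_s}=-\imf{F}{v_s}e^{w_s}\,ds$, integration by parts gives
\begin{linenomath}
\begin{align*}
dN_s&=e^{w_s}\,d(e^{v_s\cdot Z_s})+e^{v_s\cdot Z_s}\,d(e^{w_s}) \\
&=e^{w_s}\,dL_s+e^{v_s\cdot Z_s+w_s}\bra{\imf{F}{v_s}-\imf{F}{v_s}}\,ds=e^{w_s}\,dL_s,
\end{align*}
\end{linenomath}
so $N$ is a local martingale, hence a bounded martingale. Taking expectations at $s=0$ and $s=t$ gives
\begin{linenomath}
\begin{esn}
e^{z\cdot\imf{\psi}{t,u}+\imf{\phi}{t,u}}=N_0=\imf{\se_z}{N_t}=\imf{\se_z}{e^{u\cdot Z_t}},
\end{esn}
\end{linenomath}
as desired.

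The principal obstacle in this plan is the parameter-varying semimartingale formula: Lemma \ref{exponentialMartingaleLemma} is stated at fixed $v$, and transferring it to the smoothly varying $v_s$ must contend with the jumps of $Z$ (one cannot simply invoke a classical It\^o formula without first identifying the full jump characteristics of $Z$, which we are trying to circumvent). The partition-plus-Taylor argument sketched above handles this by reducing every local computation to a direct application of Lemma \ref{exponentialMartingaleLemma} at a fixed parameter, and the telescoping of martingale increments over the partition ensures the resulting error term is a local martingale after taking the limit.
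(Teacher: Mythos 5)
Your proposal is correct in substance and is built around exactly the same object as the paper's proof: the backward process $e^{\psi(t-s,u)\cdot Z_s+\phi(t-s,u)}$, analyzed by combining Lemma \ref{exponentialMartingaleLemma} (for the increment of $Z$ at frozen parameter) with the Riccati equations (for the increment of the parameter at frozen $Z$). The difference is the level at which you work. The paper never claims $N_s=e^{v_s\cdot Z_s+w_s}$ is a martingale; it only considers the deterministic function $\imf{G}{s}=\imf{\se_z}{N_s}$ and shows $G'\equiv 0$ by splitting $\imf{G}{s+h}-\imf{G}{s}$ into the same two pieces you identify, handling the first by \emph{taking expectations} in Lemma \ref{exponentialMartingaleLemma} and the second by differentiating under the expectation sign. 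This sidesteps entirely what you correctly flag as your principal obstacle: the parameter-varying semimartingale decomposition of $s\mapsto e^{v_s\cdot Z_s}$, which in your route must be built by hand via the partition-plus-Taylor argument (and whose passage to the limit — convergence of the telescoped martingale increments to a genuine local martingale — is the one step you leave as a sketch and which would need a careful $L^1$ or ucp argument, using that $v\in\re_-^m$ makes $x\mapsto x_je^{v\cdot x}$ bounded on $\re_+^m$). What your pathwise version buys is a stronger conclusion (a bounded martingale, not just a constant expectation), but for the lemma as stated that extra strength is not needed, and the paper's expectation-level computation is both shorter and free of the stochastic-calculus technicalities. If you wanted to keep your formulation, the cleanest fix is to observe that you do not need $N$ to be a martingale at all: constancy of $s\mapsto\imf{\se_z}{N_s}$ suffices, and that is precisely what the two-term splitting of the increment already gives after taking expectations.
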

\begin{proof}
Let\begin{linenomath}\begin{esn}
\imf{G}{s}=\imf{\se_z}{e^{\imf{\psi}{t-s,u}\cdot Z_s+\imf{\phi}{t-s,u}}}
\end{esn}\end{linenomath}for $s\in [0,t]$. 
We will show that $\imf{G'}{s}=0$ for any $s\in (0,t)$ which implies that\begin{linenomath}\begin{esn}
e^{z\cdot \imf{\psi}{t,u}+\imf{\phi}{t,u}}=\imf{G}{0}=\imf{G}{t}=\imf{\se_z}{e^{u \cdot Z_t}}
\end{esn}\end{linenomath}and hence finishes the proof.

To this end, write\begin{linenomath}\begin{align*}
\imf{G}{s+h}-\imf{G}{s}
&=\imf{\se_z}{e^{\imf{\psi}{t-s-h,u}\cdot Z_{s+h}+\imf{\phi}{t-s-h,u}}  - e^{\imf{\psi}{t-s-h,u}\cdot Z_{s}+\imf{\phi}{t-s-h,u}}}
\\&+\imf{\se_z}{e^{\imf{\psi}{t-s-h,u}\cdot Z_{s}+\imf{\phi}{t-s-h,u}}  - e^{\imf{\psi}{t-s,u}\cdot Z_{s}+\imf{\phi}{t-s,u}}} 
\end{align*}\end{linenomath}Taking expectations in  Lemma \eqref{exponentialMartingaleLemma}, we see that\begin{linenomath}\begin{align*}
&\frac{1}{h}\imf{\se_z}{e^{\imf{\psi}{t-s-h,u}\cdot Z_{s+h}+\imf{\phi}{t-s-h,u}}  - e^{\imf{\psi}{t-s-h,u}\cdot Z_{s}+\imf{\phi}{t-s-h,u}}} 
\\&=\frac{1}{h}
\int_{s}^{s+h}\imf{\se_z}{ e^{\imf{\psi}{t-s-h,u}\cdot Z_{r}+\imf{\phi}{t-s-h,u}}  \bra{  \imf{F\circ \psi}{t-s-h,u}+\imf{R\circ \psi}{t-s-h,u}\cdot Z_r}}\, dr
\\&\to \imf{\se_z}{ e^{\imf{\psi}{t-s,u}\cdot Z_{s}+\imf{\phi}{t-s,u}}  \bra{  \imf{F\circ \psi}{t-s,u}+\imf{R\circ \psi}{t-s,u}\cdot Z_s}}.
\end{align*}\end{linenomath}%
On the other hand, we can differentiate under the expectation to obtain:
\begin{linenomath}\begin{align*}
&\frac{1}{h}\imf{\se_z}{e^{\imf{\psi}{t-s-h,u}\cdot Z_{s}+\imf{\phi}{t-s-h,u}}  - e^{\imf{\psi}{t-s,u}\cdot Z_{s}+\imf{\phi}{t-s,u}}}
\\&\to -\imf{\se_z}{e^{\imf{\psi}{t-s,u}\cdot Z_{s}+\imf{\phi}{t-s,u}}\bra{\imf{F\circ\psi}{t-s,u}\cdot Z_s+\imf{R\circ \phi}{t-s,u} } }.
\end{align*}\end{linenomath}We conclude that $\imf{G'}{s}=0$. 
\end{proof}

\begin{proof}[Summary and conclusion of the proof of Theorem \ref{affineProcessRepresentationTheorem} when $n=0$]
Existence for solutions to \eqref{affineProcessRepresentationEquation} are covered by Lemma \ref{generalExistenceLemma} and is valid more generally. 
Uniqueness is then covered, through the concept of spontaneous generation, in Lemma \ref{noSpontaneousGenerationForLevyProcessesLemma}. 
Lemma \ref{measurabilityDetailsLemma} then proves that the unique solution to \eqref{affineProcessRepresentationEquation} is a Markov process and thanks to Lemma \ref{oneDimensionalDistributionsLemma} we can identify its one-dimensional distributions with those of a $\cbi$ process associated to the parameters of the underlying L\'evy processes called $R$ and $F$. 
\end{proof}

\section{Construction of affine processes on $\re_+^m\times\re^n$}
\label{generalConstructionSection}
Let $X^1,\ldots, X^m$, $Y$ be L\'evy processes satisfying the conditions of Theorem \ref{affineProcessRepresentationTheorem}. 
Let $R^i$ and $F$ be the characteristic exponents of $X^i$ and $Y$ as in Equation \eqref{characteristicExponentsDefinition} and let $R=\paren{R^1,\ldots, R^{n+m}}$, where we set $R^i=0$ for $m+1\leq i\leq m+n$. 
With the first $m$ coordinates of these processes we solve \eqref{affineProcessRepresentationEquation} to obtain the non-negative processes $Z^1,\ldots, Z^m$ analyzed in Section \ref{nonNegativeAffineProcessesSection}. 
We can then (re)define $Z$ by setting $Z^{m+j}= z+\sum_{i=1}^m X^{i,j}\circ C^i+Y^j$ and note that $Z$ solves Equation \eqref{affineProcessRepresentationEquation} when $\beta=0$. 
In this case, we can follow the arguments of the case $n=0$ presented in Section \ref{nonNegativeAffineProcessesSection}  to see that $Z$ is a Markov process and that its one-dimensional distributions are characterized by the computation\begin{lesn}
\imf{\se_z}{e^{ u\cdot Z_t}}=e^{z\cdot \imf{\psi}{t,u}+\imf{\phi}{t,u}},
\end{lesn}valid for $u\in\re_-^m\times i\re^n$, where $\psi$ and $\phi$ solve the Riccati equations\begin{lesn}
\frac{\partial }{\partial t}\imf{\psi}{t,u}=\imf{R\circ \psi}{t,u}
\quad\text{and}\quad
\frac{\partial }{\partial t}\imf{\phi}{t,u}=\imf{F\circ \psi}{t,u}
\end{lesn}with initial conditions $\imf{\psi}{0,u}=u$ and $\imf{\phi}{0,u}=0$. 
This proves Theorem \ref{affineProcessRepresentationTheorem} when $\beta=0$. 
Affine processes of this type have been dubbed partially additive in \cite{MR2851694} since the law of $\tilde z+Z$ under $\p_{z}$ equals $\p_{\tilde z+z}$ whenever $\tilde z$ has its first $m$ components equal to zero.  

We now extend the process $Z=\paren{Z^1,\ldots, Z^{m+n}}$ just considered to obtain the full proof of Theorem \ref{affineProcessRepresentationTheorem}. 
To do this, consider the equations\begin{linenomath}\begin{esn}
Z^{\beta,m+j}_t=z_{m+j}+\sum_{i=1}^m X^{i,m+j}\circ C^i_t+Y^{m+j}_t+\sum_{i=1}^{n}C^{\beta,i+m}_t\beta_{i,j} \quad  1\leq j\leq n
\end{esn}\end{linenomath}where $C^{\beta, j}_t=\int_0^t Z^{\beta,j}_s\, ds$ for $m+1\leq j\leq m+n$ (and $Z^{\beta,j}=Z^j$ for $1\leq j\leq m$). 
If we let extend the matrix $\beta$ to be $(m+n)\times (m+n)$ by adding zeros at coordinates $i,j$ if $i\leq m$ or $j\leq m$, the equations become:
\begin{linenomath}
\begin{equation}
\label{OUODE}
Z^\beta=Z+C^\beta \beta. 
\end{equation}
\end{linenomath}This is a linear stochastic differential equation driven by  $Z$ which, of course, admits an unique solution. 
This is for example contained in \cite[Ch. 9\S V]{MR2020294}, where the following explicit formula is given:
\begin{linenomath}
\begin{esn}
 Z^\beta_t=e^{t\beta }z+\int_0^t e^{\paren{t-s}\beta}\, dZ_s. 
\end{esn}\end{linenomath}

We first construct an exponential martingale, which takes the place of Lemma \ref{exponentialMartingaleLemma}. 
\begin{lemma}
For any $u\in\re_+^m\times i\re^n$, the stochastic process $M^\beta$ given by\begin{lesn}
M^\beta_t=e^{u\cdot Z^\beta_t}-\int_0^t e^{u\cdot Z^\beta_s}\bra{\imf{F}{u}+\paren{\imf{R}{u}+\beta u}\cdot Z^\beta_s}\, ds
\end{lesn}is a martingale. 
\end{lemma}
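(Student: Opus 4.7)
The plan is to derive the lemma from the $\beta=0$ case (contained in Lemma~\ref{exponentialMartingaleLemma} and the extension to $\re_+^m\times\re^n$ sketched at the opening of this section) by viewing $Z^\beta$ as a continuous, finite-variation perturbation of $Z$.

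First I would decompose the exponent. Extending $\beta$ by zeros to an $(m+n)\times(m+n)$ matrix so that $(\beta u)$ vanishes on the first $m$ coordinates, equation~\eqref{OUODE} gives
\[
u\cdot Z^\beta_t = u\cdot Z_t + \sum_{i=1}^n (\beta u)_{m+i}\,C^{\beta,m+i}_t,
\]
so $e^{u\cdot Z^\beta_t}=e^{u\cdot Z_t}A_t$ where $A_t:=\exp\bigl(\sum_{i=1}^n(\beta u)_{m+i}C^{\beta,m+i}_t\bigr)$ is continuous and of finite variation with $dA_t=A_t\,(\beta u)\cdot Z^\beta_t\,dt$.

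Next I would invoke the $\beta=0$ martingale identity for $Z$, namely
\[
e^{u\cdot Z_t} = e^{u\cdot z} + N_t + \int_0^t e^{u\cdot Z_s}\bigl[F(u)+R(u)\cdot Z_s\bigr]ds
\]
for some local martingale $N$, where $R(u)$ is interpreted as the vector of characteristic exponents with zero entries on the last $n$ coordinates. This is the content of the already-treated $\beta=0$ case and is obtained by the identical argument to Lemma~\ref{exponentialMartingaleLemma}: the orthogonality of the time-changed martingales $M^1,\ldots,M^m$ and of the exponential martingale associated to $Y$, supplied by Problem~19 in \cite[Ch.~2]{MR838085}, combined with integration by parts on $e^{u\cdot Y_t}\prod_i e^{u\cdot X^i\circ C^i_t}$, produces the announced drift.

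Integration by parts now finishes the local-martingale claim. Since $A$ is continuous of finite variation, $[A,e^{u\cdot Z}]=0$, whence
\begin{align*}
d\bigl(e^{u\cdot Z_t}A_t\bigr)
&=A_{t-}\,dN_t + A_t\, e^{u\cdot Z_t}\bigl[F(u)+R(u)\cdot Z_t\bigr]dt + e^{u\cdot Z_t}\,dA_t \\
&=A_{t-}\,dN_t + e^{u\cdot Z^\beta_t}\bigl[F(u)+(R(u)+\beta u)\cdot Z^\beta_t\bigr]dt,
\end{align*}
where in the second line I use $Z^i=Z^{\beta,i}$ for $1\le i\le m$ (so $R(u)\cdot Z_t=R(u)\cdot Z^\beta_t$, since the last $n$ coordinates of the $R(u)$ appearing here are zero) and combine the $dA$ contribution $(\beta u)\cdot Z^\beta_t$ with this term. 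Thus $M^\beta$ is a local martingale. Finally, since the first $m$ coordinates of $Z^\beta$ are non-negative and the last $n$ coordinates of $u$ are purely imaginary, $|e^{u\cdot Z^\beta_t}|\le 1$; a standard integrability bound on $\int_0^t\esp{|Z^\beta_s|}\,ds$ obtained from the explicit representation $Z^\beta_t=e^{t\beta}z+\int_0^t e^{(t-s)\beta}\,dZ_s$ and from integrability of the L\'evy components of $Z$, combined with a routine localisation, upgrades $M^\beta$ to a genuine martingale.

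The principal obstacle is the bookkeeping in the integration-by-parts step: one must check that the drift generated by $dA$ together with the drift $R(u)\cdot Z_t$ coming from Lemma~\ref{exponentialMartingaleLemma} reassemble correctly into $(R(u)+\beta u)\cdot Z^\beta_t$. This is the precise point at which the affine structure under the OU perturbation is visible, and verifying it forces one to be careful about which coordinates of $R(u)$ and $\beta u$ are nonzero, so that no double-counting occurs.
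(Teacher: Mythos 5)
Your proof is correct and follows essentially the same route as the paper: write $e^{u\cdot Z^\beta_t}=e^{u\cdot Z_t}e^{C^\beta_t\beta u}$, invoke the $\beta=0$ martingale decomposition, and integrate by parts using that the exponential of $C^\beta\beta u$ is continuous and of finite variation so the covariation vanishes, whence the drifts recombine into $F(u)+(R(u)+\beta u)\cdot Z^\beta_s$. Your explicit remark on upgrading from local martingale to true martingale via an integrability bound on $\int_0^t\esp{|Z^\beta_s|}\,ds$ is a detail the paper leaves implicit, but it does not change the argument.
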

\begin{proof}
Lemma \ref{exponentialMartingaleLemma} can be extended to the case $\beta=0$, proving that $M^0$ is a martingale. 
Now, note that\begin{lesn}
e^{u\cdot Z^\beta_t}=e^{u\cdot Z_t}e^{C^\beta_t\beta u}. 
\end{lesn}We now apply integration by parts, noting that since $C^\beta \beta u$ is continuous and of finite variation, then $\bra{e^{u\cdot Z}, C^\beta\beta u}=0$. 
We then obtain\begin{linenomath}
\begin{align*}
e^{u\cdot Z^\beta_t}
&=e^{u\cdot z}+\int_0^t e^{C^\beta_s\beta u}\, dM^0_s +\int_0^t  e^{C^\beta_s\beta u} \bra{Z_s\cdot \imf{R}{u}+\imf{F}{u}} e^{u\cdot Z_s}\, ds+\int_0^t e^{u\cdot Z_s}Z_s\beta u e^{C^\beta_s\beta u}\, ds. 
\\&=\text{Loc. Mart.} +\int_0^t e^{u\cdot Z^\beta_s}\bra{\imf{F}{u}+\bra{\imf{R}{u}+\beta u}\cdot Z^\beta_s}\, ds.\qedhere
\end{align*}
\end{linenomath}
\end{proof}Adapting the proof of Lemma \ref{oneDimensionalDistributionsLemma}, we see that\begin{lesn}
\imf{\se_z}{e^{u\cdot Z^\beta_t}}=e^{z\cdot \imf{\psi^\beta}{t,u}+\imf{\phi^\beta}{t,u}},
\end{lesn}where $\psi^\beta$ and $\phi^\beta$ satisfy the Riccati equations\begin{lesn}
\frac{\partial }{\partial t}\imf{\psi^\beta}{t,u}=\imf{R\circ \psi^\beta}{t,u}+\beta\imf{\psi^\beta}{t,u}
\quad\text{and}\quad
\frac{\partial }{\partial t}\imf{\phi^\beta}{t,u}=\imf{F\circ \psi^\beta}{t,u}
\end{lesn}with initial conditions $\imf{\psi^\beta}{0,u}=u$ and $\imf{\phi^\beta}{0,u}=0$. 

We now finish the proof that $Z^\beta$ is an affine process, thereby proving Theorem \ref{affineProcessRepresentationTheorem} in the remaining case when $n\neq 0$. 
Since we have already determined the one-dimensional distributions of $Z^\beta$, it remains to discuss the Markov property. 
Note that\begin{lesn}
Z^\beta_{t+s}=Z^\beta_t+Z_{t+s}-Z_t+\beta\int_t^{s+t} Z^\beta_r\, dr. 
\end{lesn}
Therefore, $Z^\beta_{t+\cdot}$ satisfies the same differential equation as $Z^\beta$ but starting at $Z^\beta_t$ and driven by $Z_{s+t}- Z_t$. 
Recall that the first $m$ coordinates of $Z^\beta$ equal those of $Z$. 
Let $\paren{\G_t,t\geq 0}$ be the filtration defined in Section \ref{nonNegativeAffineProcessesSection} and with respect to which the Markov property of $Z$ holds. 
Since $Z$ is partially additive, then \begin{lesn}
\text{the law of $Z^1_{t+\cdot},\ldots, Z^m_{t+\cdot}, Z^{m+1}_{t+\cdot}-Z^{m+1}_{t},\ldots, Z^{m+n}_{t+\cdot}-Z^{m+n}_{t}$ given $\G_t$ is $\p_{Z^1_t,\ldots, Z^m_t, 0\,\ldots, 0}$.}
\end{lesn} 
This shows that the law of $Z^\beta_{t+\cdot}$ given $\G_t$ equals the law of $Z^\beta$ under $\p_{Z^\beta_t}$ which proves that $Z^\beta$  is an affine Markov process whose infinitesimal parameters we have already determined. 

\section{Stability analysis of the time change transformation: approximation and limit theorems}
\label{stabilityAnalysisSection}
In this section, we will give a stability analysis related to the stochastic system \eqref{affineProcessRepresentationEquation} through the deterministic system \eqref{odeSystem}, aiming at a proof of Theorem \ref{stochasticLimitTheorem}. 
For the stability analysis we need to consider not only the system \eqref{affineProcessRepresentationEquation} but a differential inequality that turns up naturally. 
This differential inequality is analyzed in Subsection \ref{differentialInequalitySubsection}. 
The stability analysis is then performed in Subsection \ref{stabilityAnalysisSubsection}
which enables us to obtain some applications to approximations and limit theorems concerning affine processes in Subsection \ref{applicationsOfStabilitySubsection}. 
\subsection{A differential inequality}
\label{differentialInequalitySubsection}
Recall the setting of Theorems \ref{affineProcessRepresentationTheorem} and \ref{stochasticLimitTheorem}. 
If $C^l$ converges to $C$, it might happen that $C^{i,l}_t\leq C^i_t$ or $C^{i,l}_t\geq C^i_t$. Hence, we can only infer that\begin{lesn}
X^{i,j}_-\circ C^i_t \leq \liminf_l X^{i,j,l}\circ C^{i,l}_t\leq \limsup_l X^{i,j,l}\circ C^{i,l}_t\leq X^{i,j}\circ C^i_t.
\end{lesn}The following proposition 
is useful in determining 
whether the limit $X^{i,j,l}\circ C^{i,l}$ exists for most values of $t$. 
\begin{proposition}
\label{differentialInequalityProposition}
Under the setting of Theorem \ref{affineProcessRepresentationTheorem}, 
the associated cumulative population $C=(C^1,\ldots, C^m)$ is the unique non-decreasing and continuous process 
satisfying the differential inequalities
\begin{equation}
\label{differentialInequalityForLevyProcesses}
\int_r^t \sum_i X^{i,j}_{-}\circ  C_{s}+Y^j_{s}\, ds\leq C^j_t-C^j_r\leq \int_r^t \sum_i X^{i,j}\circ { C_s}+Y^j_{s}\, ds. 
\end{equation}
\end{proposition}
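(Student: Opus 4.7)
My plan is to handle existence and uniqueness separately; existence reduces to a direct verification from \eqref{affineProcessRepresentationEquation}, while the substantive content is uniqueness, which will combine the monotonicity of Lemmas \ref{firstMonotonicityLemma} and \ref{monotonicityLemma} with the lack of spontaneous generation established in Lemma \ref{noSpontaneousGenerationForLevyProcessesLemma}.

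For existence, I note that $C^j_t-C^j_r=\int_r^t Z^j_s\,ds$ with $Z^j_s=\sum_i X^{i,j}\circ C^i_s+Y^j_s$, so the middle member of \eqref{differentialInequalityForLevyProcesses} equals the upper bound. The lower bound then follows at once from the pointwise inequality $X^{i,j}_-\leq X^{i,j}$, which holds for every $i,j$ by the non-negative-jumps hypotheses of Theorem \ref{affineProcessRepresentationTheorem}.

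For uniqueness, let $\tilde C$ be another non-decreasing continuous process satisfying the differential inequalities and let $\tilde Z^j$ denote the (a.e.) right-hand derivative of $\tilde C^j$. I will establish $\tilde C\leq C$ and $\tilde C\geq C$ separately. The inequality $\tilde C\leq C$ follows from a sub-solution comparison: the upper inequality reads $\tilde Z^j_s\leq \sum_i X^{i,j}\circ \tilde C^i_s+Y^j_s$ for almost every $s$, and by perturbing the driving functions upward by piecewise-constant approximations that exceed $X^{i,j}$ and $Y^j$ strictly off the diagonal, the one-sided strict comparison argument of Lemma \ref{firstMonotonicityLemma} applies with $\tilde C$ in the role of $c$; passing to the limit as the perturbation shrinks and using that $C$ is the no-spontaneous-generation solution yields $\tilde C\leq C$.

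The reverse inequality $\tilde C\geq C$ is more delicate, because the lower bound involves $X^{i,j}_-$, which differs from $X^{i,j}$ precisely on time intervals on which some $\tilde C^i$ is flat at a jump value of $X^{i,j}$. My strategy is to show that $\tilde C$ itself has no spontaneous generation, so that Lemma \ref{monotonicityLemma} applied with the common driving $(X^{i,j},Y^j)$ forces $\tilde C=C$, hence $\tilde C\geq C$. Concretely, any putative flat of some $\tilde C^i$ on a time interval $[u,v]$ that does not correspond to a flat of $C^i$ would force, via the lower bound, the constraint $\sum_k X^{k,i}_-\circ \tilde C^k_s+Y^i_s\leq 0$ on $[u,v]$; the shift-and-measurability argument used to prove Lemma \ref{noSpontaneousGenerationForLevyProcessesLemma} then shows this happens with probability zero. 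The hard part will be precisely this step: carefully distinguishing flats of $\tilde C$ inherited from flats of $C$ from genuinely new ones, and producing the discretization-then-shift reshuffle so that the identity in law for shifted L\'evy processes (analogous to \eqref{identityInLawForShiftedProcessesAtFirstSpontaneousGenerationEquation}) yields the required contradiction.
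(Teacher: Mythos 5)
Your existence argument and the first half of uniqueness ($\tilde C\leq C$, obtained from the strict comparison of Lemma \ref{firstMonotonicityLemma} applied to upward piecewise-constant perturbations of the drivers) agree with the paper. The gap is in the reverse direction. You propose to show that the arbitrary comparison process $\tilde C$ itself lacks spontaneous generation (and has no ``new'' flats at jump values of the $X^{i,j}$) by running the discretization-and-shift argument of Lemma \ref{noSpontaneousGenerationForLevyProcessesLemma} directly on $\tilde C$. That argument is not available here: it rests on Lemma \ref{measurabilityPropertiesLemma}, i.e.\ on the fact that the maximal solution is constructed measurably from $(X,Y)$, so that $(C^1_\tau,\ldots,C^m_\tau,\tau)$ is a multiparameter stopping time and the post-$\tau$ increments of the drivers are again L\'evy and independent of the past. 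A process $\tilde C$ that merely satisfies \eqref{differentialInequalityForLevyProcesses} need not be adapted to $\paren{\F_{t_1,\ldots,t_m,t}}$, the onset of one of its flats need not be a stopping time, and the identity in law analogous to \eqref{identityInLawForShiftedProcessesAtFirstSpontaneousGenerationEquation} cannot be asserted for it. There is also a logical mismatch: Lemma \ref{monotonicityLemma} compares exact solutions of \eqref{odeSystem}, so before invoking it you must first collapse the sandwich in \eqref{differentialInequalityForLevyProcesses} to an equality for $\tilde C$ --- which is essentially the statement you are trying to prove.

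The paper circumvents both problems by localizing at $\tau=\inf\{t\geq 0:\tilde C_t<C_t\}$, where continuity and the already established bound $\tilde C\leq C$ give $\tilde C=C$ on $[0,\tau]$. A genuine separation after $\tau$ forces, for some $i,j$ and all small $\eps$, $\int_\tau^{\tau+\eps}X^{i,j}_-\circ\tilde C^i_s\,ds<\int_\tau^{\tau+\eps}X^{i,j}\circ\tilde C^i_s\,ds$, and this is then excluded using pathwise properties of $C$ alone: if $Z^i_{\tau-}>0$ then $\tilde C^i$ is strictly increasing near $\tau$; if $C^i$ is flat to the right of $\tau$ then the quasi-continuity statement of Lemma \ref{continuityAtConstancyIntervalsLemma} (which your proposal never invokes, and which is also what upgrades the lower inequality to an equality for $C$ itself) gives continuity of $X^{i,j}$ at $C^i_\tau=\tilde C^i_\tau$; otherwise the no-spontaneous-generation chain for $C$ transfers to $\tilde C$ through the equality on $[0,\tau]$. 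You need this localization at the first separation time, or some substitute for the adaptedness of $\tilde C$, to complete the argument.
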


As a preliminary result, let us see that $C$ itself satisfies both sides of the inequality. 
\begin{lemma}
\label{continuityAtConstancyIntervalsLemma}
Almost surely, if $t$ is such that $C^i$ is constant on an interval to the right of $t$ then $X^{i}$ is continuous at $C^i_t$. 
Hence, almost surely, for all $t>0$:
\begin{lesn}
C^j_t-C^j_r=\int_r^t \sum_{i}X^{i,j}_-\circ C^i_s+Y^j_s\, ds. 
\end{lesn}
\end{lemma}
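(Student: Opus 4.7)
The plan is to establish the pointwise continuity claim first, and then to deduce the integral identity via an almost-everywhere argument.

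The continuity claim can be reduced to the case that $t$ is an isolated left endpoint of a maximal constancy interval of $C^i$, since interior points share the value $C^i_t$ with the left endpoint. At such a $t$ one has $Z^i_t=0$ (as $C^i$ is constant on $[t,t+\varepsilon]$) and $Z^i_{t-}\geq 0$ (non-negativity of the c\`adl\`ag process $Z^i$), so $\Delta Z^i_t\leq 0$. On the other hand, from \eqref{affineProcessRepresentationEquation},
\[
\Delta Z^i_t=\sum_{k=1}^{m}\Delta(X^{k,i}\circ C^k)_t+\Delta Y^i_t,
\]
and every summand is non-negative: $Y^i$ is a subordinator; for $k\neq i$, $X^{k,i}$ is a subordinator and $C^k$ continuous non-decreasing, so $X^{k,i}\circ C^k$ is non-decreasing; for $k=i$, the isolated-left-endpoint assumption gives $\Delta(X^{i,i}\circ C^i)_t=\Delta X^{i,i}(C^i_t)\geq 0$ because $X^{i,i}$ has no negative jumps. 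Hence every summand vanishes and in particular $X^{i,i}$ is continuous at $C^i_t$. The continuity of the remaining coordinates of $X^i$ at $C^i_t$ should follow from an analogous sign analysis together with standard L\'evy sample-path arguments at these random times.

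For the integral identity, by definition $C^j_t-C^j_r=\int_r^t Z^j_s\,ds$, so (up to the $z^j(t-r)$ initial-condition term) it suffices to show $X^{i,j}\circ C^i(s)=X^{i,j}_{-}\circ C^i(s)$ for Lebesgue-almost every $s\in[r,t]$. The discontinuity set of $X^{i,j}$ is countable; on strict-increase portions of $C^i$ its $C^i$-preimage is countable and hence Lebesgue negligible, while on constancy portions the pointwise claim yields continuity of $X^{i,j}$ at the common value $C^i_s$, giving equality throughout such portions.

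The main obstacle I expect is upgrading the pointwise continuity from $X^{i,i}$ to the full vector $X^i$: the sign argument above directly controls, with $i$ fixed, the vanishing of $\Delta X^{k,i}(C^k_t)$ as $k$ ranges across the driving L\'evy processes, not the vanishing of $\Delta X^{i,j}(C^i_t)$ as $j$ varies. Closing this gap seems to require an additional ingredient, most naturally the observation that the random times at which $C^i$ stops increasing are a.s.\ disjoint from the jump times of the other, independent coordinates of $X^i$.
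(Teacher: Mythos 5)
Your sign argument for the diagonal coordinate is correct and is genuinely different from (and more elementary than) what the paper does for that part: at the left endpoint $t$ of a maximal constancy interval of $C^i$ one has $\Delta Z^i_t=-Z^i_{t-}\le 0$, while \eqref{affineProcessRepresentationEquation} writes $\Delta Z^i_t$ as a sum of nonnegative jumps, so each vanishes and in particular $\Delta X^{i,i}\paren{C^i_t}=0$. The reduction of the integral identity to the pointwise continuity statement (countably many discontinuities of $X^{i,j}$, each level set of $C^i$ either a singleton or a constancy interval on which the first part applies) is also fine and matches the role the lemma plays in the paper.

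The gap you flag at the end is, however, a genuine one, and the repair you propose does not work. For the integral identity you need $X^{i,j}$ continuous at $C^i_t$ for \emph{every} $j$, and your jump-sign bookkeeping, applied to coordinate $j$ of the system, controls $\Delta X^{k,j}\paren{C^k_t}$ for all $k$ only when $Z^j_t=0$; when $Z^j_t>0$ the left side $\Delta Z^j_t$ need not be $\le 0$ and nothing is forced to vanish. Moreover, the ``additional ingredient'' you suggest --- independence of the coordinates of $X^i$ --- is not among the hypotheses: $X^i$ is a general L\'evy process on $\re^{m+n}$ whose coordinates may be dependent (only a partial independence in the Gaussian part is assumed), and even under full independence the random level $C^i_t$ is built from the whole system, so disjointness from the jump times of $X^{i,j}$ would still require an argument. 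The paper closes this exact gap differently: it shows (via Lemma \ref{measurabilityPropertiesLemma}) that $\paren{C^1_T,\ldots,C^m_T,T}$ is a stopping time of the multiparameter filtration \eqref{completedMultiparameterFiltrationDefinition} for $T$ the first zero of $Z^i$ after a rational time, exhibits an announcing sequence $T_n\uparrow T$ with $C^i_{T_n}<C^i_T$, and invokes quasi-left-continuity of the L\'evy process $X^i$ at the resulting predictable time. That argument controls the entire vector $X^i$ at once, with no independence needed among its coordinates; some such predictability/quasi-left-continuity input appears unavoidable for the off-diagonal coordinates, so your proof is incomplete as it stands.
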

\begin{proof}
The first statement of the lemma is obviously true at $t=0$. 
To handle every $t>0$ simultaneously, it suffices to prove that for any rational $q$, if $T^j_{q,0+}$ denotes the first zero of $Z^j$ after $q$ or zero, depending on if $Z^j_q>0$ or not, then $X^i$ is continuous at $C^i(T^j_{q,0+})$. 
This is basically a result of quasi-continuity of the L\'evy processes involved. 

If $T$ is any stopping time with respect to the filtration $(\G_t,t\geq 0)$ defined in \eqref{timeChangedFiltrationDefinition} then $\paren{C^1_{T},\ldots, C^m_T, T}$ is a stopping time with respect to the multiparameter filtration $\F_{t_1,\ldots, t_m,t}$ defined in \eqref{completedMultiparameterFiltrationDefinition}. 
This follows simply when  $T$ takes values in a discrete set $\set{a_k:k\in\na}$ because, by definition of $\G_{a_k}$, we see that\begin{lesn}
\set{C^1_T\leq t_1,\cdots, C^m_T\leq t_m, T=a_k}=\set{C^1_{a_k}\leq t_1,\cdots, C^m_{a_k}\leq t_m}\cap\set{ T=a_k}\in\F_{t_1,\ldots, t_m,a_k}. 
\end{lesn}When $T$ is a general stopping time, we approximate it by the decreasing sequence of stopping times $T_n$ which takes the value $k/2^n$ if $T\in [(k-1)/2^n,k/2^n)$. 

Let $T$ equal one of the $T_{q,0+}^j$ and note that $T$ is the increasing limit of $T_n$ where $T_n$ is the first time after $q$ that $Z^j$ is below $1/n$ or zero depending on if $Z^j_q>1/n$ or not. 
We always have $T_n\leq T$. 
If $Z^j_q>0$ then $T_n<T$ for all $n$. Recall that  $C^i_{T_n}$ is a stopping time for the filtration $\sigma (X^{i'}_{s'}, X^i_s: s'\geq 0, s\leq t, i'\neq i),t\geq 0$ defined for each $i$. 
Since $X^i$ is a L\'evy process with respect to that filtration, by quasi-continuity, we see that $X^i$ is continuous at $C^i_{T}$. 
\end{proof}
\begin{proof}[Proof of Proposition \ref{differentialInequalityProposition}]
Denote by $\tilde C$ any process satisfying the inequality \eqref{differentialInequalityForLevyProcesses}. 
Recall that, from Lemma \ref{generalExistenceLemma}, $C$ is obtained as the limit of $C^l$, where $C^l$ solves Equation \ref{affineProcessRepresentationEquation} driven by processes strictly bigger than $X^{i,j}$ and $Y^j$. 
The simple argument presented in Lemma \ref{firstMonotonicityLemma} implies that $\tilde C$ is bounded above by $C^l$ and therefore $\tilde C\leq C$. 
We now let $\tau=\inf\set{t\geq 0: \tilde C_t<C_t}$. 
By continuity, we see that $\tilde C= C$ on $[0,\tau]$. 
Let us suppose that $\tau<\infty$ to reach a contradiction. 
If $\tau<\infty$, there exist $1\leq i,j\leq m$ and $\eps_0>0$ such that for $0<\eps<\eps_0$ we have
\begin{linenomath}
\begin{equation}
\label{problematicInequalityForDifferentialInequality}
\int_\tau^{\tau+\eps} X^{i,j}_-\circ \tilde C^i_s\, ds<\int_\tau^{\tau+\eps} X^{i,j}\circ \tilde C^i_s\, ds. 
\end{equation}\end{linenomath}When $Z^i_{\tau-}>0$ then $C^i$  is strictly increasing to the right of $\tau$, implying that $\tilde C^i$ is strictly increasing to the right of $\tau$ and so \eqref{problematicInequalityForDifferentialInequality} does not hold. 
When $Z^i_{\tau-}=0$, $\tau$ cannot belong to the interior (or be the beginning) of an interval of constancy of $C^i$. Indeed, Lemma \ref{continuityAtConstancyIntervalsLemma} would then imply that $X^{i,j}_-\circ C^i_\tau=X^{i,j}\circ  C^i_\tau$ and that $C^i$ is constant to the right of $\tau$ which would contradict \eqref{problematicInequalityForDifferentialInequality}. 
Hence, $C^i$ increases on any right neighbourhood of $\tau$. 
However, recall that $C$ has no spontaneous generation. 
This implies the existence distinct indices $i_0,\ldots, i_k$ in $\set{1,\ldots, m}$ such that $Z^{i_0}_\tau>0$, $i_k=i$ and $X^{i_{l-1},i_l}\circ C^{i_{l-1}}$ is strictly increasing on a right neighbourhood of $\tau$. 
Starting with $i_0$ and using the fact that $C=\tilde C$ on $[0,\tau]$, then $X^{i_{l-1},i_{l}}\circ \tilde C^{i_{l-1}}$ is strictly increasing on a right neighbourhood of $\tau$ for every $l$ and hence \eqref{problematicInequalityForDifferentialInequality} can not hold either when $Z^i_{\tau-}=0$. 
\end{proof}

\subsection{Stability analysis}
\label{stabilityAnalysisSubsection}

The following result deals with stability of 
the multiparameter time changes of equation \eqref{affineProcessRepresentationEquation} in the deterministic setting of Section \ref{deterministicPathwiseAnalysisSection}. 
We focus on the case $n=0$ since our arguments can then handle the non-negative coordinates. 
Hence, we  will concern ourselves with equation \eqref{odeSystem} not only under changes
in $f^{i,j}$ and $g^i$ for $i,j=1,\dots,m$, but also with respect to
discretization of the transformation itself. 
Consider the following
approximation procedure: given $\sigma>0$, called the span, consider
the partition
\begin{lesn}
t_k=k\sigma,\quad k=0,1,2,\dots,
\end{lesn}
and construct a function
$c^{\sigma}=(c^{\sigma}_1,\dots,c^{\sigma}_m)$ by the recursion
\begin{lesn}
c^{\sigma}_j(0)=0\quad \text{for }j=1,\dots,m.
\end{lesn}
and for $t\in[t_{k-1},t_k)$:
\begin{linenomath}
\begin{equation}
c^{\sigma}_j(t)=c^{\sigma}_j(t_{k-1})+(t-t_{k-1})[\sum_{i=1}^m f^{i,j}\circ
c^{\sigma}_i(t_{k-1})+g^j(t_{k-1})]^+.
\end{equation}\end{linenomath}Equivalently, the function $c^{\sigma}$ is the unique solution to
the system of equations
\begin{lesn}
c^{\sigma}_j(t)=\int_{0}^t[\sum_{i=1}^m f^{i,j}\circ
c^{\sigma}_i(\floor{s/\sigma}\sigma)+g^j(\floor{s/\sigma}\sigma)]^+ds\quad\text{for
 }j=1,\dots,m.
\end{lesn}
The stability result is stated in terms of the usual Skorohod $J_1$
topology for c\'adl\'ag functions: a sequence $f_l$ converges to $f$
if each coordinate converges in the usual Skorohod $J_1$ topology.
This means that for each coordinate $f^j_l,l\geq 1$ there exist a sequence of
homeomorphisms $\lambda_l^j,l\geq 1$ of $[0,\infty)$ into itself such that
\begin{lesn}
f^j_l-f^j\circ\lambda_l^j\text{ and }\lambda_l^j-Id\text{ converge to 0  uniformly on
compact sets}.
\end{lesn}We will also use the uniform $J_1$ topology and which is
characterized by: a sequence of $f_l$ converges to $f$ if for  $1\leq j\leq m$  there
exists a sequence of homeomorphisms $\lambda_l^j$of $[0,\infty)$ into itself such
that
\begin{lesn}
f^j_l-f^j\circ\lambda_l^j\text{ and }\lambda_l^j-Id\text{ converges to 0 uniformly on
$[0,\infty)$}.
\end{lesn}

\begin{theorem}\label{te}
Let $(f^{i,j})_{i,j=1}^m$ and $g^j$ be c\'adl\'ag functions which satisfy
hypothesis \defin{H}, and suppose that there exists a unique non-decreasing $c$ which satisfies
\begin{linenomath}
\begin{equation}\label{d1}
\int_s^t \sum_{i=1}^m f^{i,j}_-\circ c^i(r)+g^j(r)\, dr\leq
c^j(t)-c^j(s)\leq \int_s^t \sum_{i=1}^m f^{i,j}\circ
c^i(r)+g^j(r)\, dr
\end{equation}\end{linenomath}for $s\leq t$, and $j=1,\dots,m $. 
(In particular, $c$ solves \eqref{odeSystem} and has a right-hand derivative $h$.) 
Let $\tau$ be the explosion time of $c$ defined by 
\begin{lesn}
\tau=\inf\set{t\geq 0: \exists j\text{ such that } c^j(t)=\infty}. 
\end{lesn}If $f_l^{i,j}\to f^{i,j}$ for $i,j=1,\dots,m$, and $g^j_l\to g^j$ in the Skorohod
$J_1$ topology, $\sigma_l\to 0$, and $c_l$ is any solution to
\begin{lesn}
c_l^j(t)=\int_0^t\left[\sum_{i=1}^m f_l^{i,j}\circ
c_l^i([s/\sigma_l]\sigma_l)+g_l^j([s/\sigma_l]\sigma_l)\right]^+ds
\end{lesn}then $c_l\to c$ pointwise and uniformly on compact sets $[0,\tau)$. 
Furthermore, if $f^{i,\cdot}\circ c^i$ and $f^{j,\cdot}\circ c^j$ do not jump at the
same time for $i\neq j$ and  $f^{i,\cdot }\circ c^i$ and $g$ do not
jump at the same time then the right-hand derivatives $D_+c_l$ converge to $h$
\begin{enumerate}
\item in the Skorohod $J_1$ topology if $\tau=\infty$.
\item in the uniform $J_1$ topology if $\tau<\infty$ and we
additionally assume that $f^{i,j}_l\to f^{i,j}$ for $i=1,\dots,m$ in the
uniform $J_1$ topology.
\end{enumerate}
\end{theorem}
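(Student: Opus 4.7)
\medskip
\noindent\emph{Proof plan.} The overall strategy is compactness plus uniqueness: we extract subsequential limits of $c_l$ by a Helly-type argument, show that any such limit satisfies the differential inequality \eqref{d1}, and then invoke the uniqueness assumption to conclude that the full sequence converges to $c$. First, each $c_l^j$ is non-decreasing (the integrand is a positive part) with $c_l^j(0)=0$, so on any compact set on which the $c_l^j$ remain bounded, Helly's selection theorem produces pointwise convergent subsequences along a countable dense set, and by monotonicity the convergence extends to all continuity points of the limit. Before applying this we need \emph{a priori} local boundedness of $c_l$; this follows from Lemma \ref{firstMonotonicityLemma} (or its trivial extension to the Euler scheme) by comparing $c_l$ with an Euler scheme driven by strictly larger piecewise-constant data whose exact solutions converge to a finite function on $[0,\tau)$.

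The central step is then to show that any subsequential pointwise limit $\tilde c$ satisfies \eqref{d1}. For fixed $s$ and each $i,j$, Skorohod $J_1$ convergence of $f_l^{i,j}$ to $f^{i,j}$ combined with the pointwise convergence $c_l^i(\floor{s/\sigma_l}\sigma_l)\to \tilde c^i(s)$ (which holds at continuity points of $\tilde c^i$ and by monotonicity yields a sandwich at jump points) gives
\begin{linenomath}
\begin{equation*}
f^{i,j}_{-}\!\paren{\tilde c^i(s)}\leq \liminf_l f_l^{i,j}\circ c_l^i(\floor{s/\sigma_l}\sigma_l)\leq \limsup_l f_l^{i,j}\circ c_l^i(\floor{s/\sigma_l}\sigma_l)\leq f^{i,j}\!\paren{\tilde c^i(s)};
\end{equation*}
\end{linenomath}
the analogous inequality holds for $g_l^j$. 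Integrating over $[s,t]\subset[0,\tau)$, invoking dominated convergence (local boundedness) and Fatou's lemma both ways, and noting that the positive part is only needed to preserve monotonicity and can be dropped in the limit because the resulting right-hand side is non-negative under \textbf{H}, we recover exactly the two-sided inequality \eqref{d1} for $\tilde c$. Uniqueness then forces $\tilde c = c$, and since this holds for every subsequential limit, $c_l\to c$ pointwise on $[0,\tau)$. Since $c$ is continuous and each $c_l$ is non-decreasing, this pointwise convergence automatically upgrades to uniform convergence on compact subsets of $[0,\tau)$ by a standard Dini-type argument.

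For the convergence of the right-hand derivatives $D_+ c_l$ to $h$, note that
\begin{linenomath}
\begin{equation*}
D_+ c_l^j(t)=\bra{\sum_i f_l^{i,j}\circ c_l^i(\floor{t/\sigma_l}\sigma_l)+g_l^j(\floor{t/\sigma_l}\sigma_l)}^+.
\end{equation*}
\end{linenomath}
By the uniform convergence $c_l\to c$ on compacts and the Skorohod convergence of $f_l^{i,j}$ and $g_l^j$, each summand $f_l^{i,j}\circ c_l^i$ converges to $f^{i,j}\circ c^i$ in the Skorohod $J_1$ topology on compact subsets of $[0,\tau)$ (here one uses that $c^i$ is continuous and non-decreasing, so composition with $c^i$ is continuous for $J_1$). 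Under the no-common-jumps hypothesis, the classical continuity theorem for addition in the Skorohod $J_1$ topology (Theorem 4.1 of \cite{MR561155}) applies, giving $J_1$ convergence of the sum, and the positive part is $J_1$ continuous. The effect of the discretization $\floor{t/\sigma_l}\sigma_l$ is absorbed into the time-change component of the Skorohod homeomorphisms since $\sigma_l\to 0$. This handles case (1). In case (2), the explosion at $\tau$ forces us to work on all of $[0,\infty)$; the strengthened uniform $J_1$ convergence of $f_l^{i,j}$ together with the fact that the solutions agree past $\tau$ (with $c^j=\infty$) allows the same argument to go through with a uniform choice of time-change homeomorphism on $[0,\infty)$.

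The main obstacle is the middle step: controlling the composition $f_l^{i,j}\circ c_l^i(\floor{s/\sigma_l}\sigma_l)$ at the interaction of three limits (the data convergence in $J_1$, the monotone convergence $c_l^i\to \tilde c^i$, and the discretization $\sigma_l\to 0$). The $J_1$ topology is not jointly continuous under composition when the inner argument is discontinuous, so one cannot pass to the limit directly; the monotonicity of $c_l^i$ and the one-sided nature of the jumps of $f^{i,j}$ (only positive jumps when $i=j$, and $f^{i,j}$ non-decreasing otherwise) are precisely what make the liminf/limsup sandwich go through, and it is here that the structure imposed by hypothesis \textbf{H} is essential.
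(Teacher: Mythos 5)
Your treatment of the first half (convergence of $c_l$ to $c$) follows essentially the same compactness-plus-uniqueness route as the paper: the $\liminf/\limsup$ sandwich exploiting the absence of negative jumps, Fatou's lemma, and the assumed uniqueness of \eqref{d1}. The differences there are minor (Helly versus Arzel\`a--Ascoli; an a priori bound via comparison instead of the paper's truncation argument with $c_l\wedge x$ — note that your ``trivial extension'' of Lemma \ref{firstMonotonicityLemma} to Euler schemes is asserted rather than proved, and the paper deliberately avoids needing it).

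The genuine gap is in the derivative-convergence step. You claim that $f_l^{i,j}\circ c_l^i\to f^{i,j}\circ c^i$ in $J_1$ ``because $c^i$ is continuous and non-decreasing, so composition with $c^i$ is continuous for $J_1$.'' This is false: composition is continuous at $(f^{i,j},c^i)$ only if, in addition, $f^{i,j}$ is continuous at every level at which $(c^i)^{-1}$ jumps, i.e.\ at every value $x=c^i(s)$ attained on a nontrivial interval of constancy of $c^i$. A counterexample: take $f_l=f=\indi{[1,\infty)}$, $c(t)=t\wedge 1$ and $c_l(t)=t\wedge(1-1/l)$; then $f\circ c_l\equiv 0$ while $f\circ c=\indi{[1,\infty)}$. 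Since the approximations $c_l^i$ can approach $c^i$ from below precisely on its constancy intervals, this is exactly the situation that arises here, and hypothesis \textbf{H} alone does not exclude it. The paper's proof devotes its entire second half to closing this hole: it shows, using the assumed \emph{uniqueness} of solutions to \eqref{d1}, that $f^{i,i}$ must be continuous at every such level — a discontinuity there would produce spontaneous generation and hence a second solution — and then deduces continuity of $f^{i,j}$ for $j\neq i$ at the same level from monotonicity; only after that do the composition theorem of \cite{MR2479479} and the addition theorem of \cite{MR561155} apply. Your closing paragraph locates the ``main obstacle'' in the sandwich of the middle step, but that step is unproblematic; the real difficulty, which your argument does not address, is the continuity of $f^{i,\cdot}$ at the flat levels of $c^i$, and it is the only place where the uniqueness hypothesis on \eqref{d1} is used beyond identifying subsequential limits.
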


Theorem \ref{stochasticLimitTheorem} follows from Theorem \ref{te} thanks to Lemma \ref{differentialInequalityProposition} and Corollary \ref{noCommonJumpsCorollary}. 

In order to prove  Theorem \ref{te} we will first prove a series of
lemmata.
\begin{lemma}
Under the assumptions of Theorem \ref{te}, if $(c_l(t),l\geq1)$ is
bounded for some $t>0$ then $c_l\to c$ uniformly on $[0,t]$.
\end{lemma}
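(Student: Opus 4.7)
The plan is a compactness-plus-uniqueness argument. Each coordinate $c_l^j$ is the primitive of a non-negative integrand on $[0,t]$, hence non-decreasing; together with the assumption $\sup_l c_l^j(t)<\infty$ this yields both a uniform sup-norm bound on $[0,t]$ and an equi-Lipschitz bound, the latter because once $l$ is large the integrands $[\sum_i f_l^{i,j}\circ c_l^i(\cdot)+g_l^j(\cdot)]^+$ are uniformly bounded on $[0,t]$, using that $c_l^i$ stays in a fixed compact set $[0,M]$ and that the $J_1$ convergences $f_l^{i,j}\to f^{i,j}$ and $g_l^j\to g^j$ imply uniform boundedness on compact sets.

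By the Arzel\`a--Ascoli theorem, any subsequence of $(c_l)$ admits a further subsequence converging uniformly on $[0,t]$ to some non-decreasing Lipschitz limit $\tilde c$. The key step is to show that $\tilde c$ satisfies the differential inequality \eqref{d1}; then the uniqueness hypothesis of the theorem forces $\tilde c=c$, and the standard subsequence principle upgrades subsequential convergence to uniform convergence of the entire sequence.

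To derive \eqref{d1} for $\tilde c$, I would pass to the limit in the integral equation defining $c_l^j$. Writing $u^l_s=c_l^i(\floor{s/\sigma_l}\sigma_l)$, one has $u^l_s\to \tilde c^i(s)$ uniformly in $s$; since $c_l^i$ is non-decreasing, for $i\neq j$ its values near a discontinuity $y$ of $f^{i,j}$ must lie either weakly below $y$ (so $f_l^{i,j}(u^l_s)\to f^{i,j}_-(y)$) or at or above $y$ (so $f_l^{i,j}(u^l_s)\to f^{i,j}(y)$). Combining this dichotomy with $J_1$ convergence at continuity points and an analogous argument for $g_l^j\to g^j$ yields the pointwise sandwich
\begin{equation*}
\sum_i f^{i,j}_-(\tilde c^i(s))+g^j(s)\leq \liminf_l D^{j,l}(s)\leq \limsup_l D^{j,l}(s)\leq \sum_i f^{i,j}(\tilde c^i(s))+g^j(s),
\end{equation*}
where $D^{j,l}(s)=\sum_i f_l^{i,j}(u^l_s)+g_l^j(\floor{s/\sigma_l}\sigma_l)$. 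Dominated convergence applied to the uniformly bounded positive-part integrands then delivers both halves of \eqref{d1} for $\tilde c$, and uniqueness closes the argument.

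The main obstacle is the above sandwich for the diagonal terms $f^{j,j}$, which under hypothesis \defin{H} need only have no negative jumps rather than be monotone: the na\"ive monotone sandwich fails because $f^{j,j}$ may oscillate widely between its non-negative jumps. What rescues the argument is that $c_l^j$ is itself monotone, so that at any time $s$ where $\tilde c^j$ dwells near a value $y$, the trajectory $c_l^j$ splits a neighbourhood of $s$ into an initial segment on which $c_l^j<y$ and a terminal segment on which $c_l^j\geq y$, contributing only the two limiting values $f^{j,j}_-(y)$ and $f^{j,j}(y)$ to the integral and thereby preserving the sandwich.
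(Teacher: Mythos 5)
Your argument is correct and follows essentially the same route as the paper: uniform boundedness and equi-Lipschitz bounds from the positive-part integrands, Arzel\`a--Ascoli, passage to the limit to show any subsequential limit satisfies the differential inequality \eqref{d1}, and then the assumed uniqueness. Your extra worry about the diagonal terms $f^{j,j}$ is resolved by the same observation the paper uses --- for a c\`adl\`ag function with no negative jumps, any sequence of arguments converging to $y$ has all its $f$-limit points in $\{f(y-),f(y)\}$, so $f_-(y)\leq\liminf\leq\limsup\leq f(y)$ --- which is exactly the below-$y$/at-or-above-$y$ splitting you describe, so no separate monotonicity argument for $c^j_l$ is needed.
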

\begin{proof}
Let $M$ be a bound for $c_l$ and let $K$ be an upper bound for
$(f^{i,\cdot}_l, l\geq1, i=1,\dots,m)$ on $[0,M]$ and $(g_l, l\geq 1)$ on
$[0,t]$ (which exists since $f^{i,j}_l\to f^{i,j}$ for $i,j=1,\dots,m$, and
$g_m\to g$). For any $s\in[0,t]$ we have that
\begin{equation}\label{co}
D_+c^j_l(s)=[\sum_{i=1}^m f^{i,j}_l\circ
c^i_l([s/\sigma_l]\sigma_l)+g_l^j([s/\sigma_l]\sigma_l)]^+\leq
(m+1)K\quad\text{ for any }i=1,\dots,m\, , 
\end{equation}
implying that the family of functions $\{c^j_l:l\geq 1\}$ has
uniformly bounded right-hand derivatives (on $[0,t]$) and starting
points. (If $\sigma_l=0$ we get the same upper bound for $D_+c_l^j$
using the equality $D_+c_l^j=\sum_{i=1}^m f_l^{i,j}\circ c^i_l
+g^j_l$.) Therefore $\{c^j_l:l\geq 1, j=1,\dots,m\}$ is uniformly
bounded and equicontinuous on $[0,t]$. This in
turn implies the same for the family $\{c_l:l\geq 1\}\subset
C([0,T],\mathbb{R}^m)$ on $[0,t]$. By the Arzel\`a-Ascoli theorem,
every sequence $(c_{l_k}, k\geq1)$ has a further subsequence that
converges to a function $\tilde{c}$ (which depends on the
subsequence). We now prove that $c=\tilde{c}$, which implies that
$c_l\to c$ as $l\to\infty$ uniformly on $[0,t]$.

Suppose that $l_k$ is such that $c_{l_k}$ has a limit
$\tilde{c}$ as $k\to\infty$ uniformly on $[0,t]$. Since each
$f^{i,j}$ has no negative jumps, we get
\begin{linenomath}
\begin{align*}
\liminf_{x\to y}f^{i,j}=f_-^{i,j}(y)
\quad\text{and}\quad
\limsup_{x\to y}f^{i,j}(x)=f^{i,j}(y)
\end{align*}
\end{linenomath}so that
\begin{linenomath}
\begin{align*}
f^{i,j}_{-}\circ \tilde{c}^i\leq\liminf_{k\to \infty}f_{l_k}^{i,j}\circ
\tilde{c}^i_{l_k}
\quad\text{ and}\quad
\limsup f_{l_k}^{i,j}\circ
\tilde{c}^i_{l_k}\leq f^{i,j}\circ \tilde{c}^i.
\end{align*}
\end{linenomath}
Using Fatou's lemma we get
\begin{linenomath}
\begin{align*}
\int_s^t[\sum_{i=1}^m f^{i,j}_-\circ \tilde{c}^i(r)+g^j_-(r)]^+dr
&\leq
\tilde{c}^j(t)-\tilde{c}^j(s)
\\&\leq \int_s^t[\sum_{i=1}^m f^{i,j}\circ
\tilde{c}^i(r)+g^j(r)]^+dr
\end{align*}\end{linenomath}for each $j=1,\dots,m$.
\end{proof}
\begin{proof}[Proof of Theorem \ref{te}]
For the convergence of the $c_l$ to $c$, we will argue along
sequences $l_k\to\infty$, considering the
following two cases: $(c_{l_k}(t))$ is bounded or goes to $\infty$. The
former alternative is handled by the previous theorem, for the
latter we prove that $c_{l_k}\to c$ pointwise on $[0,t]$ as
$k\to\infty$. The conclusion is that $c_l\to c$ pointwise on
$[0,\infty)$ and hence, by the previous lemma, uniformly on compact
sets of $[0,\tau)$.

Suppose that $l_k\to \infty$ is such that $\|c_{l_k}\|_{[0,t]} \to \infty$.
For any $x>0$, consider the sequence $c_{l_k}\wedge
x=(c^1_{l_k}\wedge x,\dots,c^m_{l_k}\wedge x)$. Note that it is
uniformly bounded.
Let $K$ be a common bound for $f^{i,j}_{k_l}$ on $[0,x]$ and for $g^j$ on $[0,t]$. 
For any $s\in[0,t]$
\begin{lesn}
\imf{D_+ (c^j_{l_k}\wedge x)}{s}=[ \sum_{i=1}^m f^{i,j}_{l_k}\circ
c^i_{l_k}([s/\sigma_{l_k}]\sigma_{l_k})+g^j_{l_k}([s/\sigma_{l_k}]\sigma_{l_k})]^+1_{\{c_{l_k}^j(s)\leq
x\}}\leq (m+1)K,
\end{lesn}%
so that the sequence $c_{l_k}\wedge x$ is
uniformly bounded and equicontinuous on $[0,t]$. Let $\tilde{c}$ be
its uniform limit on $[0,t]$. If $\tilde{c}^j(s)< x$ for all
$j=1,\dots,m$ we can argue as in the proof of the previous lemma to
see that $\tilde{c}=c$ on $[0,s]$. If $\tilde{c}^j(s)\geq x$ for any
$j=1,\dots,m$, we see that both $c^j$ and $\tilde{c}^j$ both reach
$x$ at the same point $s'\leq s$ and hence
$\tilde{c}^j(s)=c^j(s)\wedge x$. Hence $c^{j}_{l_k}\wedge x\to
c^j\wedge x$. 
Since $x$ is arbitrary, we see that $c_{l_k}^j\to c^j$
pointwise on $[0,t]$, even if $| c(t)|=\infty$.

Let $h_l=D_+c_l=(D_+c_l^1,\dots,D_+c_l^m)$ and $h=D_+ c=(D_+c^1,\dots,D_+c^m)$. 
We now prove that $h_l\to h$ in
the Skorohod $J_1$ topology if the explosion time $\tau$ is
infinite. 
Recall that $h=\sum_{i=1}^m f^{i }\circ c^{i}+g$ and that
when $\sigma_l=0$ then $h_l=\sum_{i=1}^m f_l^{i}\circ c_l^{i}+g_l$
while if $\sigma_l>0$ then $h_l^j(t)=[\sum_{i=1}^m f_l^{i,j}\circ
c_l^{i}(\floor{s/\sigma_l}\sigma_l)+g^j_l(\floor{s/\sigma_l}\sigma_l)]^+$. 
Assume that $\sigma_l=0$ for all $l$ (the arguments are analogous when $\sigma_l>0$), then the
assertion $h_l\to h$ is reduced in proving that : $f_l^i\circ
c^{i}\to f^i\circ c^{i}$ for all $i=1,\dots,m$, which is related to
the composition mapping on Skorohod space, and then deducing that
$\sum_{i=1}^m f_l^{i}\circ c_l^i +g_l \to\sum_{i=1}^m f^{i}\circ c^i
+g$, which is related to continuity of addition on Skorohod space.
Both continuity assertions require conditions to hold: the
convergence $f_l^i\circ c_l^{i}\to f^i\circ c^{i}$ can be deduced from \cite{MR2479479} 
if we prove that $f^i$ is continuous at every
point at which $(c^i)^{-1}$ is discontinuous, and the convergence
$\sum_{i=1}^m f_l^{i}\circ c_l^i +g_l \to\sum_{i=1}^m f^{i}\circ c^i
+g$ will hold because of \cite[Thm. 1.4]{MR561155} since we assume that
$f^i\circ c^i$ and $f^j\circ c^j$ will not jump at the same time nor as the same time as $g$. 
Hence the convergence $h_l\to h$ is reduced to proving
that $f^i$ is continuous at the discontinuities of $(c^i)^{-1}$. 
If $c^i$ is strictly increasing 
then $(c^i)^{-1}$ is continuous. 
When $c$ is not strictly increasing,
we will use the assumed uniqueness of \eqref{d1} to prove that $f^i$
is continuous at the discontinuities of $(c^i)^{-1}$. The proof
consists in two steps. 
First we will prove that $f^{ii}$ is
continuous at $c^{i}(s)$ and the we will use this fact to prove that
the rest of the components of $f^i$ is continuous at the same point.

We know prove that $f^{i,i}$ is continuous at the
discontinuities of $(c^i)^{-1}$. Suppose that $(c^i)^{-1}$ is
discontinuous at $x$. 
Let $s=(c^i)^{-1}(x-)$ and $t=(c^i)^{-1}(x)$. 
Then $c^i$ is constant on
on $[s,t]$ while $c^i<x$ on $[0,s)$ and $c^i> x$ on $(t,\infty)$. 
Since $D_+c^i=\sum_{i'=1}^m f^{i',i}\circ c^{i'} +g^i=0$ on $[s,t)$, we see
that $\sum_{i'=1,i'\neq i}^m f^{i',i}\circ c^{i'} +g^i$ is constant on
$[s,t)$. 
We assert that
\begin{lesn}
\inf\{y\geq0:f^{i,i}(y)=-\sum_{i'=1,i\neq i}^m f^{i',i}\circ c^{i'}(s)
+g^i(s)\}=x.
\end{lesn}Indeed, if $f^{i,i}$ reached $-\sum_{i'=1,i\neq i}^m f^{i',i}\circ c^{i'}(s)
+g^i(s)$ at $x'<x$, there would exist $s'<s$ such that
\begin{lesn}
f^{i,i}\circ c^{i}(s)+\sum_{i'=1,i\neq i}^m f^{i',i}\circ c^{i'}(s)
+g^i(s)=0\geq f^{i,i}\circ c^{i}(s')+\sum_{i'=1,i\neq i}^m f^{i',i}\circ
c^{i'}(s') +g^i(s')\geq 0.
\end{lesn}so that actually we have that $-\sum_{i'=1,i'\neq i}^m f^{i',i}\circ
c^{i'}(s) +g^i(s)$ is constant in $[s',t)$. Hence, $c^i$ has
spontaneous generation, which implies
that there are at least two solutions to \eqref{d1}: one that is constant on
$(s',s)$, and $c^i$. This contradicts the assumed uniqueness to
\eqref{d1}.

Having proved the continuity of $f^{i,i}$ at $x$, we need to prove that for each $j\not=i$, that $f^{i,j}$ is continuous at $x$. To this end let us recall that $c^i(s)=x$, and consider a sequence $\{x_n\}$ such that $x_n\uparrow x$, therefore there exists another sequence $\{s_n\}$ such that $s_n \uparrow s$. So using the continuity of $f^{i,i}$ at $x$ we have that
\begin{linenomath}
\begin{align*}
0
&\leq\lim_{n\to\infty} \left( f^{i,i}\circ c^i(s_n) +\sum_{j=1,j\not=i}^mf^{j,i}\circ c^j(s_n)+g^j(s_n)\right)
\\&\leq f^{i,i}\circ c^i(s) +\sum_{j=1,j\not=i}^mf^{j,i}\circ c^j(s)+g^j(s)=0.
\end{align*}\end{linenomath}Therefore we obtain that
\begin{lesn}
\lim_{n\to\infty} \sum_{j=1,j\not=i}^mf^{j,i}\circ c^j(s_n)+g^j(s_n)=\sum_{j=1,j\not=i}^mf^{j,i}\circ c^j(s)+g^j(s).
\end{lesn}
And hence we can conclude, using that the functions $f^{j,i}$ and $g^j$, are non-decreasing that 
\begin{lesn}
\lim_{n\to\infty}f^{j,i}\circ c^j(s_n)=f^{j,i} \circ c^j(s)\qquad\text{ for all $j\not=i$.}\qedhere
\end{lesn}
\end{proof}

\subsection{Applications of the stability analysis}
\label{applicationsOfStabilitySubsection}
In this subsection, we apply the stability analysis of Subsection \ref{stabilityAnalysisSubsection} to give a proof of Corollary \ref{GWCorollary}. 
\begin{proof}[Proof of Corollary \ref{GWCorollary}]
We first analyze the action of scaling on $C^{j,l}$. Since
\begin{align*}
\frac{1}{b^j_l}C^{j,l}_{a_l t}
&=\int_0^{a_l t} \bra{\frac{k^j_l}{b^j_l}+\frac{1}{b^j_l}\sum_{i=1}^m \imf{X^{i,j,l}}{C^{j,l}_{\floor{s}}} }\, ds
\\&=\int_0^t  \bra{\frac{k^j_l}{b^j_l}+\frac{a_l}{b^j_l}\sum_{i=1}^m \imf{X^{i,j,l}}{C^{j,l}_{a_l\, \floor{a_ls}/a_l}} }\, ds,
\end{align*}we see that $(C^{j, l}_{a_lt}/b^j_l, t\geq 0, 1\leq j\leq m)$ is the Euler type approximation of span $1/a_l$ applied to $(X^{i,j,l}(b^j_l t)\, b^j_l/a_l,t\geq 0, 1\leq i,j\leq m)$. 
Note that by hypothesis, the span $1/a_l$ goes to $0$ as $l\to\infty$. 
Also, the right-hand derivative of $C^{j,l}_{a_l t} /b^j_l$ equals $Z^{j,l}_{a_l t} a_l/b^j_l$.

Also, we have assumed $(X^{i,j,l}(b^j_l t)], b^j_l/a_l,t\geq 0, 1\leq j\leq m)$ converges to $X^{i,\cdot}$. 
Since $b^j_l/a^j_l\to 0$, we see that $X^{i,i}$ is spectrally positive. 
If $i\neq j$ then $X^{i,j}$ is a subordinator. 
(We have only assumed convergence in the Skorohod $J_1$ topology. However, the same arguments as in the proof of Corollary 7 of \cite{MR3098685} gives us convergence in the uniform $J_1$ topology in case of explosion.) 
Finally, since $X^{1,\cdot},\ldots, X^{m,\cdot}$ are independent, we are in position to construct the $\cb$ process $Z$ starting at $z^j$ and constructed from $X$ and $Y=0$ in Theorem \ref{affineProcessRepresentationTheorem}. 
From Theorem \ref{stochasticLimitTheorem}, we deduce that $(Z^{\cdot, l}_{a_l t} a_l/b^j_l)$ converges to $Z$ as $l\to\infty$, which proves Corollary \ref{GWCorollary}. 
\end{proof}

\bibliography{GenBib}
\bibliographystyle{amsalpha}
\end{document}